\newcommand{\ZZ}{{\mathbb Z}}
\newcommand{\Z}{{\mathbb Z}}
\newcommand{\CC}{{\mathbb C}}
\newcommand{\RR}{{\mathbb R}}
\newcommand{\QQ}{{\mathbb Q}}
\newcommand{\TT}{{\mathbb T}}
\newcommand{\f}{\Sigma}
\newcommand{\XX}{{\mathcal X}}
\newcommand{\Vertices}{{\mathcal V}}
\newcommand{\Edges}{{\mathcal E}}
\newcommand{\Faces}{{\mathcal F}}
\newcommand{\bfw}{\mathbf w}
\newcommand{\bfe}{\mathbf e}
\newcommand{\B}{{\mathcal B}}
\newcommand{\Fs}{{\mathbf F}}
\newcommand{\A}{{\mathcal A}}
\renewcommand{\L}{{\mathcal L}}
\newcommand{\F}{\mathcal F}
\newcommand{\Rbar}{\overline{\mathbb{R}}}
\newcommand{\comment}[1]{}
\DeclareMathOperator{\rec}{rec}
\DeclareMathOperator{\In}{in}
\DeclareMathOperator{\trop}{Trop}
\DeclareMathOperator{\MW}{MW}
\DeclareMathOperator{\Aff}{Aff}
\DeclareMathOperator{\csm}{csm}
\DeclareMathOperator{\Hom}{Hom}
\DeclareMathOperator{\Pic}{Pic}
\DeclareMathOperator{\Div}{div}
\DeclareMathOperator{\cone}{cone}
\DeclareMathOperator{\spann}{span}
\DeclareMathOperator{\starr}{star}
\DeclareMathOperator{\divis}{div}
\DeclareMathOperator{\Ac}{A}
\DeclareMathOperator{\sed}{sed}
\DeclareMathOperator{\Todd}{Todd}
\newcommand{\TP}{\mathbb{TP}}
\newcommand{\T}{\mathbb{T}}
\newcommand{\R}{\mathbb{R}}
\newcommand{\C}{\mathbb{C}}
\newtheorem{thm}{Theorem}[section]
\newtheorem{prop}[thm]{Proposition}
\newtheorem{lemma}[thm]{Lemma}
\newtheorem{cor}[thm]{Corollary}
\newtheorem{conj}[thm]{Conjecture}
\theoremstyle{definition}
\newtheorem{exa*}[thm]{Example}
\newtheorem{defi}[thm]{Definition}
\newtheorem{rem}[thm]{Remark}}
\DeclareRobustCommand{\qedify}[1]{%
  \ifmmode \quad\hbox{#1}
  \else
    \leavevmode\unskip\penalty9999 \hbox{}\nobreak\hfill
    \quad\hbox{#1}%
  \fi
  }
\newenvironment{exa}{\begin{exa*}\pushQED{\qedify{$\diamondsuit$}}}{\popQED\end{exa*}}
\newtheorem*{rep@theorem}{\rep@title}
\newcommand{\newreptheorem}[2]{%
\newenvironment{rep#1}[1]{%
 \def\rep@title{{\bf #2 \ref{##1}}}%
 \begin{rep@theorem}}%
 {\end{rep@theorem}}}
\numberwithin{equation}{section}
\begin{document}

\title{Chern classes of tropical manifolds}


\author{ Luc\'ia L\'opez de Medrano}
\address{Unidad Cuernavaca del Instituto de Matem\'aticas, Universidad Nacional Aut\'onoma de M\'exico. Cuernavaca, M\'exico.}
\email{lucia.ldm@im.unam.mx}

\author{Felipe Rinc\'on}
\address{School of Mathematical Sciences, Queen Mary University of London, London, United Kingdom.}
\email{f.rincon@qmul.ac.uk}

\author{Kris Shaw} 
\address{Department of Mathematics, University of Oslo, Oslo, Norway.}
\email{krisshaw@math.uio.no}

\begin{abstract}
We extend the definitions of Chern-Schwartz-MacPherson (CSM) cycles of matroids to tropical manifolds. To do this, we provide an alternate description of CSM cycles of matroids which is invariant under integer affine transformations.
Utilising results of Esterov and Katz-Stapledon, we prove correspondence theorems for the CSM classes of tropicalisations of subvarieties of toric varieties. 
We also provide an adjunction formula relating the CSM cycles of a tropical manifold and a codimension-one tropical submanifold. 
Lastly, we establish Noether's Formula for compact tropical surfaces with a Delzant face structure. This extends the class of surfaces for which the formula had been previously proved by the third author. 
\end{abstract}

\maketitle
\tableofcontents


\section{Introduction}

The Chern-Schwartz-MacPherson (CSM) cycles of a matroid $M$ are a collection of tropical cycles introduced in \cite{LdMRS}. 
When the matroid is realisable by a hyperplane arrangement in $\CC^n$, 
its CSM cycles encode the CSM classes of the complement of the arrangement inside its 
wonderful compactification.

CSM cycles of matroids geometrically encode many properties of the matroid. 
Their degrees, for example, are the coefficients of the shifted reduced characteristic
polynomial; 
this was proved via deletion-contraction in \cite{LdMRS}, and later by \cite{AshrafBackman} using a refined basis activities expansion of the Tutte polynomial. 
The fact that the coefficients of the shifted reduced characteristic polynomial of a matroid have this tropical-geometric interpretation was used in 
\cite{ADH} to prove the log-concavity 
of this sequence.
CSM cycles of matroids have also been related to tautological classes of matroids in \cite{BEST}.

In this paper we extend the definition of CSM cycles of matroids to (smooth) tropical manifolds,
and we give evidence that they behave like Chern classes of tangent bundles.
Tropical manifolds are topological spaces 
equipped with an atlas of charts to Bergman fans of matroids 
and transition maps that are integer affine maps; 
see Definition \ref{def:tropmanifold} for more details. 
Examples of tropical manifolds include tropical curves, tropical linear spaces, 
non-singular tropical hypersurfaces, and integral affine manifolds; see Section \ref{sec:examples}.

The CSM cycles of a matroid $M$ are Minkowski weights supported on the different skeleta of the Bergman fan of $M$. 
The weights assigned to the faces of the skeleta are given by products of beta invariants of matroid minors of $M$; see Definition \ref{def:chernweight}. 
A priori, this means that the CSM cycles of $M$ depend heavily on $M$. 
There are examples of non-isomorphic matroids whose corresponding Bergman fans have the same support up to integer linear maps; see, for example, Proposition \ref{prop:parallelBergman}.  
As a first step in extending the definition of CSM cycles to tropical manifolds, we prove in Proposition  \ref{thm:csmisomorphism} that the definition of CSM cycles of matroids 
is ${\rm GL}_n(\Z)$ invariant. 
For this purpose, we show that the weights of the CSM cycles can be equivalently determined from the cosheaves arising in tropical homology \cite{IKMZ}. 

In Section \ref{sec:correspondence}, we provide results that relate the CSM classes of algebraic varieties with the CSM cycles of their tropicalisations.
We first show that the $0$-th CSM cycle of the tropicalisation of a family of subvarieties of a toric variety recovers the topological Euler characteristic of a general member of the family.

\newtheorem*{thm:Eulercharacteristic}{Theorem \ref{thm:Eulercharacteristic}}
\begin{thm:Eulercharacteristic}
Let $\mathcal{X}$ be a meromorphic family of subvarieties of a toric variety $\mathcal{Y}$ over the punctured disk $\mathcal{D}^*$, with general member $\XX_t$. Suppose that $\trop(\mathcal{X}) = X$ is a tropical submanifold in $Y = \trop(\mathcal{Y})$. Then 
$$\chi(\mathcal{X}_t) = \deg(\csm_0(X)),$$
where $\chi(\mathcal{X}_t)$ denotes the topological Euler characteristic. 
\end{thm:Eulercharacteristic}

To prove the  above theorem, we combine the correspondence  results for CSM cycles of matroid fans from \cite{LdMRS} and 
the description of the tropical motivic nearby fibre of Katz and Stapledon \cite{KatzStapledon}.

In the case $\mathcal Y$ is a non-singular projective toric variety, we relate the CSM cycles of $X = \trop(\mathcal{X})$ to the CSM classes of a general member $\XX_t$ of the family $\XX$.
To do this, we combine the above theorem and the work of Esterov \cite{Esterov}, and relate the CSM class of $\XX_t$ in the Chow ring of $\mathcal{Y}$ to the CSM cycles of the tropicalisation of $\XX$, as we now explain. 

The Chow cohomology ring of a non-singular projective toric variety $\mathcal Y$ defined by a fan $\Sigma$ is isomorphic to the ring of Minkowski weights supported on $\Sigma$ \cite{FultonSturmfels}. We denote this graded ring  by $\MW_{\ast}(\Sigma)$. 
The group of Minkowski weights of dimension $k$ is denoted $\MW_k(\Sigma)$, and is isomorphic to $\Hom ( \A_{n-k}(\mathcal{Y}) , \Z)$, where $n$ is the dimension of $\mathcal Y$ and $\A_{n-k}(\mathcal{Y})$ is the $(n-k)$-graded piece of the Chow group of $\mathcal{Y}$. 
By Poincar\'e duality, taking the cap product with the fundamental class of $\mathcal{Y}$ provides an isomorphism
$$\cdot \frown  [\mathcal{Y}] \colon \MW_{k}(\Sigma) \xrightarrow{\,\,\cong\,\,} \A_{k}(\mathcal{Y}).$$
If $\mathcal{X}$ is transverse to the toric boundary of $\mathcal{Y}$ then the recession fan of $X$ is supported on the fan $\Sigma$. Following \cite{AHR}, by taking the recession cycles $\rec(\csm_k(X))$ of $\csm_k(X)$ (see Definition \ref{def:reccycle}), we thus obtain a collection of Minkowski weights on $\Sigma$.

\newtheorem*{thm:corrToric}{Theorem \ref{thm:corrToric}}
\begin{thm:corrToric}
Suppose $\mathcal{X}$ is a meromorphic family of subvarieties of $(\C^*)^N$ over the punctured disk $\mathcal{D}^*$, with general member $\XX_t$. 
Let $\mathcal{Y}$ be a non-singular projective toric variety 
with defining fan $\Sigma$, and let $\overline{\mathcal{X}}$ be the closure of $\mathcal{X}$ in $\mathcal{Y} \times \mathcal{D}^*$. 
Suppose that $\overline{X} := \trop(\overline{\mathcal{X}})$ is  a tropical manifold transverse to $\trop(\mathcal{Y})$. Then
$$\csm_k( \mathbbm{1}_{\mathcal{X}_t}) \, = \, \rec(\csm_{k}(X)) \frown [\mathcal{Y}] 
\,\, \in \A_{k}(\mathcal{Y}).$$
\end{thm:corrToric}

Theorems \ref{thm:Eulercharacteristic} and \ref{thm:corrToric} generalize \cite[Theorem 3.1]{LdMRS}, which relates the CSM class of the complement of a hyperplane arrangement in the Chow ring of its wonderful compactification 
to the CSM cycles of the corresponding matroid. 

In previous work, 
Bertrand and Bihan equipped the skeleta of tropical complete intersections in $\R^n$ with integer weights 
to produce balanced tropical cycles \cite{BertrandBihan}.  
In their construction, their weights are, up to  sign, related to the Euler characteristic of a non-degenerate complete intersection 
in $(\CC^*)^n$ \cite[Theorem 5.9]{BertrandBihan}. 
The situation they consider overlaps with our own when the tropical complete intersections are also tropical manifolds. In the hypersurface case, this is equivalent to the dual subdivision being unimodular. 

In Section \ref{sec:adjunction}, we give a product formula for CSM cycles of matroids, and use it to prove an adjunction-like formula relating the codimension-1 CSM cycles of a tropical manifold and a submanifold of codimension 1. This generalises the adjunction formula for curves in tropical surfaces proved in \cite[Theorem 6]{Shaw:Surf}.

\newtheorem*{thm:Adjunction}{Theorem \ref{thm:Adjunction}}
\begin{thm:Adjunction}[Adjunction formula]
Let $X$ be a tropical manifold  of dimension $d$ and $D \subset X$ a tropical submanifold of codimension 1 in $X$. 
Then 
$$\csm_{d-2}(D) = (\csm_{d-1}(X) -  D )\cdot D,$$
where $\cdot$ denotes the tropical intersection product in $X$. 
\end{thm:Adjunction}

The notion of a tropical submanifold is more subtle than just being a tropical manifold contained in a tropical manifold; see Definition \ref{def:submanifold}. In Example \ref{ex:singline} we recall a situation where being a tropical manifold of contained in $X$ and having codimension 1 is not sufficient for Theorem \ref{thm:Adjunction}.

In the case $X$ is a 2-dimensional compact tropical manifold, we prove a tropical version of Noether's Formula expressing its topological Euler characteristic $\chi(X)$ in terms of its CSM cycles and their intersections. This is proven under the assumption that $X$ admits a Delzant face structure, i.e., a cell structure where every cell locally looks like a polytope in which the primitive vectors of the edges adjacent to any vertex can be extended to a basis of the ambient lattice; see Definition \ref{def:facestructure}.
 
\newtheorem*{thm:Noether}{Theorem \ref{thm:Noether}}
\begin{thm:Noether}[Tropical Noether's Formula]
Let $X$ be a compact tropical surface admitting a Delzant face structure.
Then 
$$\chi(X) = \frac{\deg( \csm_0(X) + \csm_1(X)^2) }{12}.$$
\end{thm:Noether}
Noether's formula had previously been proven for tropical surfaces arising from tropical toric surfaces via the operations of tropical modifications and summation \cite{Shaw:Surf}.

\subsection*{Acknowledgements}
We would like to give our thanks to Alex Esterov, Kyle Huang, Grigory Mikhalkin, Dmitry Mineyev, Johannes Rau, and Paco Santos for very helpful conversations.  

L.~L.d.M.~was funded by ECOS NORD 298995 and PAPIIT-IN105123. 
F.~R.~was partially supported by EPSRC grant EP/T031042/1.
K.~S.~is supported in part by the Trond Mohn Foundation project ``Algebraic and topological cycles in complex and tropical geometry". 
This collaboration was also carried out under the  Center for Advanced Study Young Fellows Project ``Real Structures in Discrete, Algebraic, Symplectic, and Tropical Geometries", as well as during the workshops Tropical Methods in Geometry in the MFO in Oberwolfach and Algebraic Aspects of Matroid Theory in BIRS in Banff. We thank all  institutions for the their support and excellent working conditions.

\section{Preliminaries}
 
\subsection{Tropical manifolds}
To define tropical manifolds we first introduce a partial compactification of $\R^n$ used in tropical geometry. Let $\mathbb{T} := \R \cup \{-\infty\}$ and equip it with the topology whose basis consists of intervals $(a, b)$ and $[-\infty, b)$ where $a < b$. The real numbers $\R$ will be equipped with the Euclidean topology. The spaces $\T^r$ and $\T^r \times \R^n$ are equipped with the product topologies. All subsets of $\T^r \times \R^n$ are equipped with the subspace topologies. 

The space $\T^r$ is a stratified,  where the strata are 
$$\T^r_I := \{ (x_1, \dots , x_r)  \colon x_i = -\infty \text{ iff } i \in I\}$$ for 
$I \subset [n]$.
Given a stratum $\T^r_I$, we call $I$ its \textbf{sedentarity}, and we say  a point $x \in \T^r$ is of  sedentarity $I$ if and only if it is in $\T^r_I$.  Notice that we have 
$\T^r = \bigsqcup_{I \subset [r]} \T^r_I$ 
and $\T^r_I$ can be identified with $\R^{r -|I|}$.   

We assume the reader is familiar with the definitions of tropical cycles in $\R^n$, including weight functions and the balancing condition, and refer to \cite{MaclaganSturmfels} and \cite{MikRau} for more background. 
A \textbf{tropical cycle of sedentarity $I$} in $\T^r$ is the topological closure of a tropical cycle in $\T^r_I \cong \R^{r-|I|}$. 
Notice that we do not ask for a cycle in $\T^r$ of sedentarity $I$ to satisfy a balancing condition on any new codimension-1 faces in the closure.
 A tropical cycle in $\T^r$ is a formal sum of tropical cycles of different  sedentarities. 

We can also extend the notion of  sedentarity  to spaces of the form $\mathbb{T}^{r} \times \R^n$. A tropical cycle of sedentarity $I$ is the closure  in $\mathbb{T}^{r} \times \R^n$ of a cycle in $\mathbb{T}^r_I \times \R^n$.   A $k$-dimensional  tropical cycle in $\mathbb{T}^r_I$ is again a formal sum of $k$-dimensional cycles of different sedentarities.

\begin{exa}[{\em Matroidal tropical cycles.}]
Denote by $\{e_0, e_1, \dotsc, e_n\}$ the standard basis of the lattice $\ZZ^{n+1} \subseteq \R^{n+1}$. 
For any subset $S \subseteq \{0,\dotsc,n\}$, let $e_S \coloneqq \sum_{i\in S} e_i \in \ZZ^{n+1}$.
We denote $\mathbf 1 := (1,1,\dots,1) \in \RR^{n+1}$.

Suppose $M$ is a loopless matroid of rank $d+1$ on the ground set $\{0,1,\dots,n\}$. 
The {affine Bergman fan} $\hat{\B}(M)$ of $M$ 
is the pure $(d+1)$-dimensional rational polyhedral fan in $\R^{n+1}$ consisting of the collection of cones of the form 
\[\sigma_\F \coloneqq  -\cone(e_{F_1},e_{F_2},\dotsc,e_{F_k}) + \RR \!\cdot\! \mathbf 1\]
where 
$\F = \{\emptyset \subsetneq F_1 \subsetneq F_2 \subsetneq \dotsb \subsetneq F_k \subsetneq \{0,\dotsc, n\}\}$ is a chain of flats in the lattice of flats $\L(M)$ of $M$. 
If $M$ is a matroid with loops then we define $\hat{\B}(M) = \emptyset$.
The {(projective) Bergman fan} $\B(M)$ of $M$
is the pure $d$-dimensional rational polyhedral fan obtained as the image of $\hat{\B}(M)$ 
in the quotient vector space $\R^{n+1} / \,\RR \!\cdot\! \mathbf 1$.

The {\bf matroidal tropical cycle} $Z_M$ associated to $M$ is the tropical cycle in the vector space $\R^{n+1} / \,\RR \!\cdot\! \mathbf 1 \cong \R^n$ whose support is the projective Bergman fan 
$\B(M)$ and all multiplicities on the maximal cones are equal to $1$. 
Note that the product $ X = \T^r \times Z_M$ is a tropical cycle in $\mathbb{T}^r \times \R^n$ of sedentarity $\emptyset$ --- indeed, $X$ is the closure of the matroidal tropical cycle $Z_{C_r \oplus M} \subseteq \R^r \times \R^n$, where $C_r$ is the free matroid on $r$ elements (i.e., having no circuits).
\end{exa}

Spaces of the form $\T^r \times Z$ for $Z$ a matroidal cycle in $\R^n$ will be the local building blocks of tropical manifolds. Before we give the precise definition we must also define the notion of {extended integer affine maps}. Recall that an integer affine map $F: \R^{r'} \to \R^r$ is the composition of an integer linear map with a translation defined over $\R$, that is, a map of the form $F(x) = Ax + b$ with $A \in \ZZ^{r \times r'}$ and $b \in \RR^r$.

\begin{defi}
\label{def:integeraffmap}
Let $F\colon  \R^{r'} \rightarrow \R^r$ be an integer affine map and let $A \in \ZZ^{r \times r'}$ denote the integer matrix representing the linear part of $F$. Let $I$ be the set of 
$i \in [r']$ such that the $i$-th column of $A$ has only non-negative
entries. 
Then $F$ can be extended to a map 
$$\hat F\colon \left( \bigcup \limits_{J \subset I} \T^{r'}_J \right)\rightarrow \TT^r$$ 
by continuity. 
The restriction of such an $\hat F$ to an open subset $U' \subset \TT^{r'}$ is called an \textbf{extended integer affine map}. Note that this only makes sense if we have $\text{sed}(x) \subset I$ for all $x \in U'$. 
\end{defi}

We are now ready to define tropical manifolds. These will be topological spaces with an atlas of charts to the supports of matroid fans and extended integer affine linear  transition maps. 
We follow the definitions from  \cite[Section 7]{MikRau} for abstract tropical cycles and polyhedral spaces,
and collect the necessary properties from \cite[Definition 7.4.1]{MikRau} in the definition below. 

\begin{defi} \label{def:tropmanifold}
A $d$-dimensional \textbf{tropical manifold} is a Hausdorff topological space $X$ equipped with an atlas of charts  $\{\varphi_{\alpha} \colon U_{\alpha} \rightarrow \Omega_{\alpha} \subset X_{\alpha}\}_{{\alpha} \in \mathcal I}$ where $\mathcal I$ is a finite set 
and 
\begin{enumerate}
\item $X_{\alpha} = \mathbb{T}^{r_{\alpha}} \times Y_{\alpha}$ with $Y_{\alpha}$ a matroidal tropical cycle of dimension $d- r_{\alpha}$ in $\R^{n_{\alpha}}$ and $\Omega_\alpha$ is  an open subset of $X_{\alpha}$ for every $\alpha \in \mathcal I$, 
\item for every $\alpha \in \mathcal I$ the map $\varphi_{\alpha} \colon U_{\alpha} \rightarrow \Omega_{\alpha}$ is a homeomorphism, 
\item the transition functions $\varphi_{\beta} \circ \varphi_{\alpha}^{-1}$ are extended integer affine maps,
\item \label{closure} for each $\alpha \in \mathcal I$ there exists an extension $\varphi_{\alpha}' \colon U_{\alpha}' \rightarrow \Omega_{\alpha}' \subset X_{\alpha}$ of $\varphi_\alpha$ such that $\overline{\Omega}_{\alpha} \subset  \Omega_{\alpha}' $. 
\end{enumerate}
\end{defi}

\subsection{The boundary of a tropical manifold}
\label{sec:boundary}

Points of the stratified space $\T^r \times \R^n$ have an {order of sedentarity} defined by
$$s(x) := |\{ x_i : x_i = -\infty\}|.$$
The order of sedentarity is preserved under invertible extended integer affine maps and hence this notion extends to tropical manifolds, as long as we assume that the chart chosen contains points of sedentarity $0$ in their image. 

\begin{defi}
For a point $x \in  X$ of a tropical manifold, the {\bf order of sedentarity} of $x$ is $s(x) = |s(\phi_{\alpha}(x))|$, where $\phi_{\alpha}: U_{\alpha} \to  X_{\alpha}$ is any  chart satisfying $x \in U_{\alpha}$ and with $\phi_{\alpha}(U_{\alpha})$ containing points of sedentarity $0$.
The {\bf boundary} of a tropical manifold $X$ is $$\partial X = \{ x \in X : s(x) > 0 \}.$$ 
A {\bf boundary divisor} of a tropical manifold $X$ is 
the closure of a connected component of the set $\{ x \in X : s(x) = 1\}$. 
Every boundary divisor is of codimension 1 in $X$ and is itself a tropical manifold; \cite[Proposition 1.2.8]{Shaw:Surf}. Call the set of boundary divisors $\A = \{ D_1, \dots, D_k\}$ the \textbf{arrangement of boundary divisors} of $X$. 
\end{defi}

For a tropical manifold $X$ we denote the complement of its boundary by $X_0 = X \backslash \partial X$. This is the collection of points of $X$ with order of sedentarity  $0$.
Note that $X_0$ is also a tropical manifold and that $X$ is the closure of $X_0$. 
Each connected component of $\{ x \in X : s(x) = k\}$ is a tropical manifold of dimension $\dim X - k$, and so is its closure in $X$.

\subsection{Examples of tropical manifolds}
\label{sec:examples}

\begin{exa}[{\em Abstract tropical curves.}] An abstract tropical curve is a  $1$-dimensional finite simplicial complex equipped with a metric on the complement of its 1-valent vertices. Abstract tropical curves are exactly $1$-dimensional tropical manifolds, since in dimension $1$ the metric is equivalent to an integer affine structure. Indeed, each point of a tropical curve comes equipped with a chart to a matroidal fan in $\R^n$ of a loopless matroid of rank $2$ or to a neighbourhood of $-\infty \in \T$.
\end{exa}

\begin{exa}[{\em Integral affine manifolds.}]
An integral affine manifold is a manifold equipped with an atlas of charts such that the transition functions are affine transformations with linear part defined by an integral matrix. 
An integer affine manifold is thus a tropical manifold where the tropical charts from Definition \ref{def:tropmanifold} satisfy $X_{\alpha} = \R^d$ for all $\alpha$. In dimension 2 the topological  type of a compact integer affine manifold without boundary is either $S^1 \times S^1$ or the Klein bottle. 

Tropical Abelian varieties are integer affine manifolds obtained as quotients  $\R^n / \Lambda$, where $\Lambda \subseteq \R^n$ is a full-rank sublattice satisfying certain conditions \cite{MikZha:Jac}.  An $n$-dimensional tropical Abelian variety is homeomorphic to $(S^1)^n$. 
\end{exa}

\begin{exa}[{\em Tropical toric varieties.}]\label{ex:toricman}
Let $\Sigma$ be a rational polyhedral fan in $\R^N$.
As a topological space, the associated tropical toric variety $\T\Sigma$ can be described as a quotient of the disjoint union of tropical tori $U_{\sigma} = \T^{\dim(\sigma)} \times \R^{N - \dim(\sigma)},$ ranging over all cones $\sigma \in \Sigma$ \cite[Definition 3.2.3]{MikRau}. 
From this description we obtain an atlas of charts $\{U_{\alpha}, \phi_{\alpha}, X_{\alpha} \}_{\alpha \in I}$ where $I$ is in bijection with the top dimensional cones of $\Sigma$ and the charts $\phi_{\alpha}: U_{\alpha} \to \Omega_{\alpha} \subset X_{\alpha}$ satisfy $X_{\alpha} = \T^N$ for all $\alpha \in I$. 
The tropical toric variety $\T\Sigma$ is a tropical manifold if and only if  $\Sigma \subset \R^N$ is a  unimodular fan, see \cite[Section 3.2]{MikRau}.  The tropical toric variety $\T\Sigma$ is compact if and only if $\Sigma$ is complete.   
\end{exa}

\begin{exa}[{\em Tropical projective space.}]
Projective space is a toric variety and 
tropical projective space can be constructed using the same fan as projective space over a field. It can also be described as the following quotient:
$$\TP^{n} = \frac{\R^{n+1} \backslash ( -\infty, \dots, -\infty)}{(x_0: \dots :x_n) \sim (a + x_0: \dots :a +x_n)  },$$
where $a \in \T \backslash \{-\infty\}$.
From the quotient description we obtain tropical homogeneous coordinates on $\TP^n$ which we write as $[x_0 : \dots : x_n]$.   
\end{exa}

\begin{exa}\label{ex:submanifolds}
[\emph{Tropical manifolds in  tropical toric varieties}]
Let $\hat{X}$ be a pure dimensional polyhedral complex in $\R^n$. The support $X = |\hat{X}|$ is a tropical manifold if for every face $\tau \in \hat X$, the star fan ${\rm star}_\tau(\Sigma)$ of $X$ at  $\tau$ is the support of a Bergman fan of some matroid $M_{\tau}$ up to an integer affine transformation of $\R^n$. 

Suppose in addition that the collection of recession cones of all the cones $\tau \in \hat X$ forms a fan $\Sigma$, called the recession fan of $\hat X$.
Let $\overline{X}$ denote the closure of $X$ in the tropical toric variety $\T \Sigma$.
Then $\overline{X}$ is a compact space and is the canonical compactification of $X$ from \cite{KastnerShawWinz}, \cite{AminiPiquerez}.  Moreover, the space $\overline{X}$ is a tropical manifold in the sense of Definition \ref{def:tropmanifold}. 
\end{exa}

\begin{exa}[{\em Tropical linear spaces.}]
A {tropical Pl\"ucker vector} is a vector $p \in \Rbar^{\binom{[n+1]}{d+1}}$
satisfying the tropical Pl\"ucker relations: 
For every $A \in \binom{[n+1]}{d+2}$ and $B \in \binom{[n+1]}{d}$, the maximum $\max_{i \in A \setminus B} (p_{A \setminus i} + p_{B \cup i})$ is attained twice. 
Any tropical Pl\"ucker vector $p$ gives rise to a {tropical linear space} $\textstyle L(p)$ defined as
\[
\textstyle L(p) := \{x \in \R^{n+1} / \,\RR \!\cdot\! \mathbf 1 : \max_{i \in S} (p_{S-i} + x_i) \text{ is achieved twice for any } S \in \binom{[n+1]}{d+2}\}.
\]
Tropical linear spaces are $d$-dimensional tropical manifolds: around any $x \in \textstyle L(p)$,
the tropical linear space $\textstyle L(p)$ looks like a matroidal fan \cite{Speyer}.

We can consider the closure of a tropical linear space in different tropical toric varieties. For example, the closure of $L(p)$ in $ \TP^n$ is a tropical projective subspace; however, the closure $\overline{L(p)}$ does not necessarily intersect the boundary of $\TP^n$ transversely, and $\overline{L(p)}$ will not be a tropical manifold in the sense of Definition \ref{def:tropmanifold}.
Nonetheless, the closure of $L(p)$ in the tropical toric variety of $\Sigma = \rec(L)$ where $\rec(L)$ denotes the recession fan of $L$ is a tropical manifold in the sense of Definition \ref{def:tropmanifold}.
\end{exa}

\begin{exa}[{\em Non-singular tropical hypersurfaces.}]\label{ex:hyper}
A tropical hypersurface $X_f \subset \R^N$ is the divisor $\text{div}_{\R^N}(f)$ of a tropical regular  function  $f:\R^N  \to \R$, see \cite{MaclaganSturmfels}, \cite{MikICM}. It is a weighted  polyhedral complex dual to a regular subdivision of the Newton polytope of $f$.  If the dual subdivision is unimodular, meaning each polytope in the subdivision is a simplex with normalised volume equal to $1$, the hypersurface is called non-singular. Non-singular hypersurfaces locally look like matroidal tropical cycles of corank-$1$ matroids, and are thus tropical manifolds.

Let $\Sigma$ be the dual fan of the Newton polytope of $f$. 
The recession fan of $X_f$ is the codimension-1 skeleton of $\Sigma$. As in Example \ref{ex:submanifolds}, we can compactify $X_f$ in $\T \Sigma$. If the  dual fan $\Sigma$ is unimodular then $\T \Sigma$ and $\overline{X_f}$ are both compact tropical manifolds in the sense of Definition \ref{def:tropmanifold}.
\end{exa}

\subsection{Cycles in tropical manifolds}
Here we recall the definitions of cycles in tropical manifolds from \cite{ShawInt} and  \cite{MikRau}. 
As previously stated, a tropical cycle $A$ in $\R^n \times \T^r$ is a formal sum of cycles $A = \sum_{I \subset [r]} A_I$ where $A_I$ is the closure in $\R^n \times \T^r$ of a tropical cycle in 
$\R^n \times \TT^r_I = \R^n \times \RR^{r - |I|}$.  

\begin{defi}\label{defi:tropicalCycles}
A \textbf{tropical $k$-cycle} $Z$ in a tropical manifold $X$ is a subset $Z \subset X$ equipped with a weight function $w: \Omega \to \Z$ on an open dense subset $\Omega \subset Z$ such that, for all charts $\varphi_{\alpha} \colon U_{\alpha} \to Y_{\alpha} \times \T^{r_{\alpha}}$ of $X$, the image $\varphi_{\alpha}(Z \cap U_{\alpha})$ is a tropical $k$-cycle in $\R^{n_{\alpha}} \times \T^{r_{\alpha}}$ with the weight function induced from the weight function on $Z \cap U_{\alpha}$. 
\end{defi}

\begin{defi}\label{def:transverse}
A tropical $k$-cycle $A$ of a tropical manifold $X$ is {\bf transverse to the boundary} of $X$ if for every boundary stratum  $D_{I} := \bigcap_{i \in I} D_i$ of $X$ and for every chart $\varphi_{\alpha} \colon U_{\alpha} \to Y_{\alpha} \times \T^{r_{\alpha}}$, the image $\varphi_{\alpha}(U_\alpha \cap A \cap D_I)$ is of codimension $|I|$ in $\varphi_{\alpha}(U_\alpha \cap A)$  or it is empty.
\end{defi}

The adjunction formula for tropical CSM cycles in Section  \ref{sec:adjunction} concerns the intersection of divisors in a tropical manifold. 
Similarly, Noether's Formula in Section \ref{sec:Noether} talks about the 
intersection of $1$-cycles in tropical surfaces. 
There are various approaches to intersection theory in tropical manifolds \cite{ShawThesis}, \cite{FrancoisRau}, all of them equivalent for our purposes. 
For completeness, we now recall the theory of intersecting with tropical Cartier divisors. 

A {\bf tropical Cartier divisor} on a tropical manifold $X$ equipped with charts $\varphi_{\alpha} \colon U_{\alpha} \to X_{\alpha} \subset \RR^{n_\alpha} \times \T^{r_{\alpha}}$ is a collection of tropical rational functions $\{f_{\alpha}\}$ where 
$f_{\alpha} \colon \R^{n_{\alpha}} \times \T^{r_{\alpha}} \to \TP ^1 = [-\infty, \infty]$ and such that 
on the overlaps $U_{\alpha} \cap U_{\beta} $ the difference $f_{\alpha} - f_{\beta} $ is a 
bounded integer affine function on $X_{\alpha}$, implying in particular that $f_{\alpha} - f_{\beta} $ does not attain the values $\pm \infty$.
Every codimension-$1$ tropical cycle $D$ in a tropical manifold is a Cartier divisor \cite[Lemma 2.23]{ShawInt}, 
meaning that 
there exists a tropical Cartier divisor $\{f_{\alpha}\}$ such that $\varphi_\alpha (D \cap U_{\alpha}) = \Div_{X_{\alpha}}(f_{\alpha})$ for all $\alpha$.  

Suppose  $X$ is a tropical manifold without boundary. 
Then for $D$ a codimension-$1$ tropical cycle in $X$ and $A$ a tropical $k$-cycle in $X$, we can define the intersection of $D$ and $A$ by first expressing $D$ as a Cartier divisor $f = \{ f_{\alpha}\}$, and then setting
$$D \cdot A = \Div_A( f ),$$
where $\Div_A(f)$ is the tropical cycle which in each chart $U_{\alpha}$ is equal to $\Div_{A \cap U_{\alpha}}(f_{\alpha})$. 
The above recipe also works in the case $X$ has a boundary and $D$ does not contain any boundary divisors.

When the codimension-$1$ cycle $D$ contains components which are boundary divisors it is possible that some of the functions $\{f_{\alpha}\}$ are identically equal to $\pm \infty$  on $A$. This is the case, for example, when we wish to consider $D^2$ for a boundary divisor $D$. In this situation we use the theory of tropical line bundles from \cite{JRS}. The charts of $X$ restricted to a neighbourhood of $D$ in $X$ defines a line bundle on $D$ \cite{ShawThesis}. 
The Cartier divisor $\{f_{\alpha}\}$ defines a tropical line bundle $L \in \Pic(X) = H^1(X, \Aff_{\mathbb{Z}})$ and 
all Cartier divisors rationally equivalent to $f$ arise from tropical rational sections of $L$. By \cite[Proposition 4.6]{JRS}, every tropical line bundle admits a non-zero section $s$. A choice of section produces another codimension-$1$ cycle $D'$ in $X$ which is rationally equivalent to $D$ and is a sedentarity $0$ cycle. This cycle can then be intersected with $A$, so we define $D \cdot A := D' \cdot A$.
Notice that in this case the intersection product is only well defined up to rational equivalence, since $D'$ depends on the choice of section of $L$.

\subsection{Tropical submanifolds}
\label{sec:submanifolds}
For a tropical subvariety to behave as a submanifold of a tropical manifold we require a compatibility condition on the corresponding charts, as described below.

\begin{defi}\label{def:submanifold}
Let $X$ be a tropical manifold and let $W$ be a tropical cycle in $X$ of dimension $k$ which has constant weight function equal to $1$. 
Then $W$ is a {\bf tropical submanifold} of $X$ if 
there is an atlas of charts $\{U_{\alpha}, \varphi_{\alpha}\}$ for $X$ 
such that $\{(U_{\alpha} \cap W), \varphi_{\alpha}|_W\}$ is also an atlas of charts for $W$. 
\end{defi}

Note that, in particular, if the atlas for $X$ has charts $\varphi_{\alpha} \colon U_{\alpha} \to \Omega_{\alpha} \subseteq Y_{\alpha} \times \T^{r_\alpha}$ where $Y_{\alpha}$ are matroid fans, then $\varphi_{\alpha}|_W \colon U_{\alpha} \cap W \to W_{\alpha} \times \T^{r'_\alpha}$, where $W_{\alpha}$ are matroid fans such that $W_{\alpha} \subseteq Y_{\alpha}$  and $r'_\alpha \leq r_\alpha$.  This implies that the matroid of $W_{\alpha}$ must be a matroid quotient of the matroid of $Y_{\alpha}$  \cite[Lemma 2.21]{ShawInt}, \cite[Proposition 3.3]{FrancoisRau}. Example \ref{ex:singline} shows a pair of tropical manifolds $W$ and $X$ with $W \subset X$ which violates this condition on the compatibility of the charts and hence $W$ is not a submanifold of $X$ in the sense of Definition \ref{def:submanifold}.

We say that $W$ is a sedentarity-$0$ submanifold of $X$ if $r'_\alpha=r_\alpha$ in all charts $\varphi_\alpha$ of $X$. By the condition imposed on the compatibility of the charts, a sedentarity-$0$ submanifold of $X$  is necessarily transverse to the boundary of $X$ in the sense of Definition \ref{def:transverse} when considered as a tropical cycle of $X$. 

\begin{exa*}\label{ex:singline}
As we have mentioned before, being a tropical submanifold depends on a compatibility condition on the charts. 
For instance, there exist tropical surfaces $X$ in $\R^3$ of degree $\geq 3$ which contain tropical lines $L$ not satisfying Definition \ref{def:submanifold}.
These examples were discovered by Vigeland \cite{Vig1} as infinite families of lines on tropical hypersurfaces.
Both the surface $X$ and the line $L$ are tropical manifolds;
however, the atlases for $X$ and $L$ are not compatible with each other. Moreover, these lines in tropical surfaces do not satisfy the adjunction-like formula that we present in Theorem \ref{thm:Adjunction};  see \cite[Section 7.3]{BrugalleShaw}. 
\end{exa*}

\setcounter{thm}{0}

\section{CSM cycles of tropical manifolds}

In this section we show that CSM cycles of matroids are invariant under invertible integer affine transformations of the underlying Bergman fan, which allows us to extend the definition of CSM cycles to tropical manifolds; see Definition \ref{def:csmManifold}. 
We start by recalling the definition from \cite{LdMRS}.

\begin{defi}\label{def:chernweight}
Suppose $M$ is a loopless rank $d+1$ matroid on the ground set $\{0,1,\dots,n\}$.
For $0 \leq k \leq d$, the $k$-dimensional {\bf Chern-Schwartz-MacPherson (CSM) cycle}
of $M$, denoted $\csm_k(M)$, is the tropical cycle in $\R^{n+1} / \, \mathbb R \cdot \mathbf{1} \cong \RR^n$ supported on the $k$-dimensional skeleton 
of the Bergman fan $\B(M)$ in which the weight of the top-dimensional cone $\sigma_\F$ corresponding to a flag of flats $\F \coloneqq  \{\emptyset = F_0
\subsetneq F_1 \subsetneq \dotsb \subsetneq F_{k} \subsetneq F_{k+1} =
\{0,\dotsc,n\}\}$ is 
\[ w_{\csm_k(M)}(\sigma_\F) \coloneqq  (-1)^{d-k} \prod_{i=0}^{k} \beta(M|F_{i+1}/F_i),\]
where $M|F_{i+1}/F_i$ denotes the minor of $M$ obtained by restricting to $F_{i+1}$ and contracting $F_i$.
If $M$ is a matroid with loops then we define $\csm_k(M) \coloneqq  \emptyset$ for all $k$. If $\Sigma_M$ is the matroidal fan of $M$, we will also denote $\csm_k(\Sigma_M) := \csm_k(M)$.
\end{defi}

The next definition extends the notion of CSM cycles to spaces of the form $X =  \T^r \times \Sigma_M$.
The space $X$ is the closure of the matroidal fan of the matroid $C_r \oplus M$, where $C_r$ is the  free matroid on $r$ elements (i.e., the matroid with no circuits).
Note that for any $I \subset [r]$, the intersection $X \cap (\T^r_I \times \R^n)$ is again the support of a matroid fan in  $\R^{r+n -|I|}$ where the underlying  matroid is $C_{r-|I|} \oplus M$. 

\begin{defi}\label{def:csmInTn}
Let $\Sigma_M$ be a matroid fan in $\R^n$.  The $k$-th Chern-Schwartz-MacPherson cycle of 
$\T^r \times \Sigma_M$  is  
$$\csm_k( \T^r \times \Sigma_M )  = \sum_{\emptyset \subset I \subset [r]} \csm_k(\T^r_I \times \Sigma_M ).$$
\end{defi}

The following 
proposition, whose proof we provide later in this subsection, shows that our definition of CSM cycles is well-behaved under extended integer affine  maps. This allows us to define CSM cycles of general tropical manifolds in Definition \ref{def:csmManifold}. 
 
\begin{prop}\label{thm:csmisomorphism}
Let $Y'$ and $Y$ be matroidal cycles in $\R^{n'}$ and $\R^n$ respectively, and suppose that there is an invertible map $\varphi: Y' \to Y$ which is induced by  an affine  linear map $\varphi \colon \R^{n'} \to \R^{n}$.
Then $$\varphi( \csm_k(Y')) =  \csm_k(Y)$$ as tropical cycles. 
\end{prop}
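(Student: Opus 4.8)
The plan is to compare the two Minkowski weights cone by cone, after reducing the weights to an intrinsic, matroid-free description. First I would record what the affine isomorphism buys us geometrically. Since $\varphi$ is invertible and induced by an affine linear map, it restricts to an integral-affine isomorphism of the supports $|Y'| \to |Y|$: it carries the fan structure of $Y'$ to that of $Y$, preserves the dimensions of faces, and on the tangent space of each face it is an isomorphism of the associated integral lattices. Consequently the pushforward of any balanced Minkowski weight on $Y'$ is a balanced Minkowski weight on $Y$ with the transported weights, and $\varphi$ sends the $k$-skeleton of $Y'$ onto the $k$-skeleton of $Y$, carrying maximal $k$-cones to maximal $k$-cones. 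Thus $\varphi(\csm_k(Y')) = \csm_k(Y)$ is equivalent to the statement that for every maximal cone $\sigma'$ of the $k$-skeleton of $Y'$ the CSM weight of $\sigma'$ equals the CSM weight of $\sigma := \varphi(\sigma')$.

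The obstacle is that these weights are defined in Definition \ref{def:chernweight} through a choice of matroid: writing $Y' = \Sigma_{M'}$ and $Y = \Sigma_M$, the weight of a maximal $k$-cone is a product of beta invariants of minors read off from the flag of flats indexing that cone. However, the matroid underlying a matroidal cycle is not determined by its support up to affine isomorphism — non-isomorphic matroids can have affinely isomorphic Bergman fans (Proposition \ref{prop:parallelBergman}) — so the flags indexing $\sigma'$ and $\sigma$ need not correspond under any matroid isomorphism, and the two beta-products cannot be matched combinatorially. What is needed, therefore, is a description of the weight that refers only to the integral-affine geometry of the cycle near the cone, that is, only to the star $\Star_{\sigma}(Y)$ together with its tangent lattices.

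The key step is to supply such a description using the cosheaves $\mathcal{F}_p$ of tropical homology \cite{IKMZ}. These multi-tangent cosheaves — whose stalk at a face $\tau$ is spanned by the $p$-vectors $v_1 \wedge \cdots \wedge v_p$ of tangent directions of the faces containing $\tau$ — are built solely from the integral-affine polyhedral structure, so $\varphi$ induces compatible isomorphisms $\mathcal{F}_p(\tau') \cong \mathcal{F}_p(\varphi(\tau'))$ for all faces $\tau'$ and all $p$. I would then prove that, for a maximal cone $\sigma_{\F}$ of the $k$-skeleton, the CSM weight equals a signed sum of dimensions of these cosheaf stalks over the faces of $\Star_{\sigma_{\F}}(Y)$ — an ``Euler characteristic'' of the cosheaf complex of the star, which is manifestly invariant under integral-affine isomorphism. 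Concretely this combines two ingredients: the product decomposition $\Star_{\sigma_{\F}}(\Sigma_M) \cong \prod_{i=0}^{k}\Sigma_{M|F_{i+1}/F_i}$ of the star (up to the lineality direction of $\sigma_{\F}$) into the Bergman fans of the minors appearing in the weight, and the explicit computation of $\dim \mathcal{F}_p$ for Bergman fans in terms of the matroid, under which the relevant alternating sum collapses exactly to the product $\prod_{i=0}^{k} \beta(M|F_{i+1}/F_i)$, the sign $(-1)^{d-k}$ being already present. Verifying this identity — matching the combinatorial beta-product to the geometric cosheaf Euler characteristic, and checking that the product decomposition of the star is compatible with the cosheaves — is the technical heart of the argument and the main difficulty.

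Granting this, the proof concludes immediately: both CSM weights, of $\sigma'$ in $Y'$ and of $\sigma$ in $Y$, equal the same intrinsic cosheaf-Euler-characteristic of their respective stars, and $\varphi$ identifies $\Star_{\sigma'}(Y')$ with $\Star_{\sigma}(Y)$ as integral-affine cone complexes. Hence the weights coincide cone by cone, and therefore $\varphi(\csm_k(Y')) = \csm_k(Y)$ as tropical cycles.
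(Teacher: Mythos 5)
Your proposal follows essentially the same route as the paper's proof: Lemma \ref{lem:invarianceFp} establishes that the spaces $\Fs_p$ are invariant under invertible integer-affine maps, Proposition \ref{prop:charOS} identifies the polynomial $\sum_p(-1)^p\dim\Fs_p(\B(M))\,\lambda^{d-p}$ with the reduced characteristic polynomial via Zharkov's Orlik--Solomon isomorphism, and Lemma \ref{lem:sameweights} recovers the CSM weight of a cone $\sigma$ from its star exactly as you outline, using the decomposition of $\starr_\sigma(\f)$ into the Bergman fan of $\bigoplus_i M|F_{i+1}/F_i$ and multiplicativity of characteristic polynomials. The one refinement worth noting is that the intrinsic invariant is not a plain alternating sum of cosheaf stalk dimensions over the faces of the star (a na\"ive Euler characteristic would vanish on positive-dimensional cones, since $\bar{\chi}_{M_\sigma}(1)=0$ there), but rather the polynomial $\psi_{\starr_\sigma(\f)}(\lambda)$ built from the stalk at $\sigma$ alone, divided by the appropriate power of $(\lambda-1)$ before evaluating at $\lambda=1$.
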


Since tropical manifolds are defined as spaces with an atlas of charts in which the transition functions are invertible integer affine maps, Proposition \ref{thm:csmisomorphism} ensures the following notion of CSM cycles for tropical manifolds is well defined.

\begin{defi}\label{def:csmManifold}
The $k$-th \textbf{Chern-Schwartz-MacPherson cycle} $\csm_k(X)$ of a tropical manifold $X$ is the tropical cycle supported on $X$ such that, in each chart $\varphi_{\alpha} \colon U_{\alpha} \to X_{\alpha}$, the image of $\csm_k(X)$ is $\csm_k(X_{\alpha}) \cap \varphi_{\alpha} (U_{\alpha})$. 
\end{defi}

\begin{rem}
We remark that the CSM cycles of products $\Sigma_M \times \T^{r}$, where $\Sigma_{M} \subset \R^n$ is a matroid fan, are transverse to the boundary of $\R^{n} \times \T^{r}$, and hence CSM cycles of a tropical manifold $X$ are transverse to its boundary. 
\end{rem}

Before proving Proposition \ref{thm:csmisomorphism}, we provide an interesting class of examples of non-isomorphic matroids whose matroidal cycles are related by invertible integer linear maps.

\begin{defi}\label{defi:parallel}
Let $M_1$ and $M_2$ be two loopless matroids on the ground sets $E_1$ and $E_2$, respectively, and let $p_1 \in E_1$ and $p_2 \in E_2$. The {\bf parallel connection} $P(M_1,M_2)$ of $M_1$ and $M_2$ at the basepoints $p_1$ and $p_2$ is a matroid on the ground set $E := (E_1 - p_1) \sqcup (E_2 - p_2) \sqcup \{p\}$. If we make the identification $p_1 = p_2 = p$, the flats of $P(M_1,M_2)$ are those $F \subseteq E$ such that $F \cap E_1$ is a flat of $M_1$ and $F \cap E_2$ is a flat of $M_2$. Equivalently, the circuits of $P(M_1,M_2)$ are the subsets of $C \subseteq E$ such that $C$ is a circuit of $M_1$ or $M_2$, or $C = I_1 \sqcup I_2$ with $I_i \sqcup \{p\}$ a circuit of $M_i$; see \cite[Proposition 7.6.6]{White1}.
\end{defi}

In general, the isomorphism class of the parallel connection of two matroids $M_1$ and $M_2$ depends on the choice of basepoints. It follows from the next proposition that the matroidal cycles of any two parallel connections of $M_1$ and $M_2$ are related by an invertible integer affine map, even though their underlying matroids might not be isomorphic.

\begin{prop}\label{prop:parallelBergman}
If $M$ is the parallel connection of the matroids $M_1$ and $M_2$ (at any basepoints) then there exists an invertible map
$\varphi: Z_M \to Z_{M_1} \times Z_{M_2}$, which is the restriction of an invertible integer linear map between the ambient vector spaces.  
\end{prop}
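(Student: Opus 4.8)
The plan is to exhibit the map explicitly as a \emph{duplication of the basepoint coordinate} and then verify two independent things: that it is an invertible integer linear map of the ambient spaces, and that it identifies the supports of the two cycles. Identify $p_1=p_2=p$ and consider the integer linear map $\tilde\varphi\colon\R^E\to\R^{E_1}\oplus\R^{E_2}$ sending $x=(x_e)_{e\in E}$ to the pair $(y,z)$ determined by $y_e=x_e$ for $e\in E_1-p_1$, by $z_e=x_e$ for $e\in E_2-p_2$, and by $y_{p_1}=z_{p_2}=x_p$. Since $\tilde\varphi(\mathbf 1_E)=(\mathbf 1_{E_1},\mathbf 1_{E_2})\in\R\mathbf 1_{E_1}\oplus\R\mathbf 1_{E_2}$, it descends to a map $\varphi\colon\R^E/\R\mathbf 1\to(\R^{E_1}/\R\mathbf 1)\times(\R^{E_2}/\R\mathbf 1)$ between the ambient spaces of $Z_M$ and $Z_{M_1}\times Z_{M_2}$.

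To see that $\varphi$ is invertible over $\Z$, I would pass to reduced coordinates centred at the basepoints: write $u_e=x_e-x_p$ for $e\in E-p$ on the source, and use the analogous differences $y_e-y_{p_1}$ and $z_e-z_{p_2}$ on the two factors of the target. Because $E-p=(E_1-p_1)\sqcup(E_2-p_2)$ and $y_e-y_{p_1}=x_e-x_p=u_e$, $z_e-z_{p_2}=x_e-x_p=u_e$, in these coordinates $\varphi$ is literally the identity $\Z^{E-p}\xrightarrow{\ \sim\ }\Z^{E_1-p_1}\times\Z^{E_2-p_2}$. Thus $\varphi$ is an isomorphism of lattices, hence an invertible integer linear map.

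For the supports, I would use the standard description of a matroidal cycle as a tropical linear space: $\bar x\in Z_M$ if and only if for every circuit $C$ of $M$ the maximum $\max_{i\in C}x_i$ is attained at least twice, and likewise for $Z_{M_1},Z_{M_2}$ in the coordinates $y,z$. By Definition \ref{defi:parallel} the circuits of $M$ are (a) the circuits of $M_1$, (b) the circuits of $M_2$, and (c) the sets $I_1\sqcup I_2$ with $I_i\cup\{p_i\}$ a circuit of $M_i$. Under the identification $y_{p_1}=z_{p_2}=x_p$, the type-(a) conditions are exactly the circuit conditions of $M_1$ and the type-(b) conditions those of $M_2$; hence $\bar x\in Z_M$ immediately gives $\bar y\in Z_{M_1}$ and $\bar z\in Z_{M_2}$, proving $\varphi(Z_M)\subseteq Z_{M_1}\times Z_{M_2}$.

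The crux is the reverse inclusion, namely that the conditions coming from $M_1$ and $M_2$ already force the \emph{new} type-(c) conditions. Fixing such a circuit $I_1\sqcup I_2$, set $\mu=\max_{i\in I_1\cup I_2}x_i$ and compare it with $x_p$ (note $I_1,I_2\neq\emptyset$ since $M_1,M_2$ are loopless). If $\mu>x_p$, choose $i$ with $\max_{j\in I_i}x_j=\mu$; the circuit $I_i\cup\{p_i\}$ then has maximum $\mu$ attained at least twice, and since $x_p<\mu$ both achievers lie in $I_i$, so $\mu$ is attained twice inside $I_1\cup I_2$. If $\mu=x_p$, then for each $i$ the maximum over $I_i\cup\{p_i\}$ equals $x_p$ and, being attained twice, must be attained a second time inside $I_i$, giving one achiever of $\mu$ in $I_1$ and one in $I_2$. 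Finally $\mu<x_p$ is impossible, since then the circuit $I_1\cup\{p_1\}$ would attain its maximum only at the basepoint slot. This case analysis — seeing the two basepoint conditions combine into a single twice-attained maximum on $I_1\cup I_2$ — is the main obstacle. Once both inclusions hold, $\varphi$ is a linear isomorphism matching supports, and since both cycles are matroidal with all weights equal to $1$ it is an isomorphism of weighted tropical cycles, which is the assertion.
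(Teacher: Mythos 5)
Your proof is correct and follows essentially the same route as the paper: the same duplication-of-the-basepoint map $\varphi$ (the paper defines it on basis vectors by $\bar e_p \mapsto \bar e_{p_1}+\bar e_{p_2}$, which is your coordinate description) and the same circuit description of the parallel connection, with the paper writing down $\varphi^{-1}$ explicitly where you use reduced coordinates, and leaving the support verification --- your case analysis on the type-(c) circuits --- as an ``easy check.'' One cosmetic remark: the paper's $Z_M$ is described via the min-convention ($\min_{i\in C}\alpha_i$ attained twice, matching the cones $-\cone(e_{F_1},\dotsc,e_{F_k})$), so your max-based inequalities should be negated throughout, which changes nothing structurally.
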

\begin{proof}
Let $E_1$ and $E_2$ be the ground sets of $M_1$ and $M_2$, respectively. Suppose $M$ is the parallel connection of $M_1$ and $M_2$ at the base points $p_1 \in E_1$ and $p_2 \in E_2$, and let $E := (E_1 - p_1) \sqcup (E_2 - p_2) \sqcup \{p\}$ be the ground set of $M$. Consider the vector spaces $V_1 := \RR^{E_1}$, $V_2 := \RR^{E_2}$, and $V := \RR^{E}$, and let $\Delta_1 = (e_a)_{a \in E_1}$, $\Delta_2 = (e_b)_{b \in E_2}$, and $\Delta = (e_c)_{c \in E}$ denote their standard bases, respectively. Let $\varphi: V/ \RR \cdot e_{E} \to (V_1/ \RR \cdot e_{E_1}) \oplus (V_2/ \RR \cdot e_{E_2})$ be the map defined by $\varphi(\bar e_x) := \bar e_x$ if $x \in E - p$ and $\varphi(\bar e_p) = \bar e_{p_1} + \bar e_{p_2}$. The function $\varphi$ is an isomorphism of vector spaces; in fact, its inverse is given by $\varphi^{-1}(\bar e_x) = \bar e_x$ if $x \in (E_1 - p_1) \cup (E_2 - p_2)$, and $\varphi^{-1}(\bar e_{p_i}) = -\bar e_{E_i-p_i}$ for $i=1,2$. 

The support of the matroidal cycle
$Z_M$ of any matroid $M$  can be described as
\[\textstyle \bigl|Z_M\bigr| = \bigl\{\sum \alpha_i \bfe_i : 
\forall \text{ circuits $C$ of $M$, } \min \{\alpha_i : i \in C\} \text{ is achieved at least twice} \bigr\},\] see \cite[Theorem 4.2.6]{MaclaganSturmfels}. 
Using the description of the circuits of $M$ from Definition \ref{defi:parallel}, it is easy to check that for any $x \in V/ \RR \cdot e_{E}$ we have $x \in |Z_M|$ if and only if $\varphi(x) \in |Z_{M_1} \times Z_{M_2}|$, as desired.
\end{proof}

\begin{conj}\label{conj:iso}
If $M$ and $M'$ are non-isomorphic matroids and there exists an invertible map  $\phi: Z_M \to Z_{M'}$  which is the restriction of an invertible integer linear map between their ambient spaces, then $M$ and $M'$ are parallel connections.   
\end{conj}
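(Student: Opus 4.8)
The plan is to approach the conjecture through matroid connectivity theory, showing that the linear-isomorphism type of the support $|Z_M|$ records precisely the $3$-connected pieces of $M$ together with the tree along which they are glued, while the freedom of choosing basepoints in the resulting parallel connections is exactly what produces non-isomorphic matroids with the same fan. The basic observation is that an invertible integer linear map $\phi \colon |Z_M| \to |Z_{M'}|$ automatically carries the \emph{coarsest} polyhedral fan structure of $|Z_M|$ onto that of $|Z_{M'}|$, since this structure is intrinsic to the support as a polyhedral set. The content of the conjecture is therefore to compare the matroidal (fine) fan structure with the coarsest one and to prove that every discrepancy between them is accounted for by a parallel-connection decomposition.

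I would first reduce to connected matroids. An invertible linear map preserves the lineality space of the support, and the dimension of this lineality space equals the number of connected components of $M$ minus one; this recovers the direct-sum decomposition of $M$ and reduces the problem to the case where $M$ is connected and $|Z_M|$ is pointed. For a connected matroid the central local input is that the star of $Z_M$ along the ray $u_F$ attached to a proper nonempty flat $F$ is a product $Z_{M|F} \times Z_{M/F}$ of the Bergman fans of the restriction and the contraction. I would then show that $u_F$ fails to be an extremal ray of the coarsest structure exactly when this local picture contains the line $\R u_F$, and that this degeneracy is equivalent to $\{F, E \setminus F\}$ being a $2$-separation of $M$, i.e. to $M$ admitting a parallel-connection decomposition with $F$ as one of its factors, recovering the product structure of Proposition \ref{prop:parallelBergman}. (The special case of parallel elements, where the factor is a uniform matroid $U_{1,k}$, is subsumed here, so there is no loss in later assuming $M$ simple.)

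Granting this dictionary, the next step is to feed $\phi$ into the canonical tree decomposition of a connected matroid into $3$-connected pieces, circuits and cocircuits glued by $2$-sums (Cunningham--Edmonds, Seymour). I would argue that $\phi$ descends to an isomorphism of the two decomposition trees, so that $|Z_M|$ remembers the multiset of $3$-connected pieces and the shape of the tree, but not the choice of which element of each piece serves as a basepoint. Moving these basepoints is precisely re-expressing $M$ as a different parallel connection of the same pieces, which yields the non-isomorphic matroid $M'$; conversely, whenever $\phi$ is not induced by a relabeling of the ground set, some basepoint must have been moved, forcing a nontrivial parallel-connection structure on both $M$ and $M'$, which is the assertion of the conjecture.

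The crux, and the step I expect to be the main obstacle, is the base case: a $3$-connected matroid admitting no nontrivial parallel-connection decomposition should be \emph{rigid}, meaning every proper nonempty flat gives an extremal ray of the coarsest fan structure, so that $\phi$ merely permutes these rays. Such a permutation would induce an isomorphism of the lattices of flats, and since a simple matroid is determined by its lattice of flats, it would force $M \cong M'$ --- contradicting non-isomorphism unless a parallel connection is present. Proving this rigidity amounts to ruling out that an integer linear map can merge the fine cones around some $u_F$ without changing the support, for reasons not coming from a $2$-separation; controlling these potential ``accidental'' coincidences is the heart of the difficulty and the reason the statement remains conjectural. A final, more routine, subtlety is bookkeeping the exact shape of the conclusion: ``$M$ and $M'$ are parallel connections'' must be read as both decomposing as parallel connections of the same factors at possibly different basepoints, and the reduction above must be arranged to deliver precisely this.
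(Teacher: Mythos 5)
You are attempting to prove Conjecture \ref{conj:iso}, which the paper explicitly leaves open: there is no proof in the paper to compare against, and your proposal does not supply one either. You say yourself that the ``rigidity'' step for $3$-connected matroids is the crux and ``the reason the statement remains conjectural,'' so what you have written is a strategy sketch with an acknowledged hole at its centre, not a proof.

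Beyond that, two of the concrete claims on which the sketch rests are false as stated. First, the criterion for the ray $u_F = -e_F$ to fail to be extremal in the coarsest fan structure is not that $\{F, E\setminus F\}$ is a $2$-separation: since $\starr_{\rho_F}(\B(M)) = \B(M|F \oplus M/F)$ and the lineality space of a Bergman fan has dimension one less than the number of connected components of the matroid, the correct criterion is that $M|F$ or $M/F$ is disconnected, which is a strictly weaker condition than $\lambda(F)\le 1$. Second, and consequently, your rigidity claim for $3$-connected matroids fails: for $M = M(K_4)$, which is $3$-connected and not a nontrivial parallel connection, the rank-$2$ flat $F$ consisting of two disjoint edges satisfies $M|F \cong U_{2,2}$, which is disconnected, so $u_F$ lies in the relative interior of a cone of dimension at least $2$ in the coarsest structure and is not an extremal ray. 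Hence an invertible linear map $\phi$ need not permute the rays $u_F$, and the step where $\phi$ induces an isomorphism of lattices of flats does not go through. The reduction to connected matroids via the lineality space and the idea of comparing with the Cunningham--Edmonds tree decomposition are sensible starting points, but the heart of the problem --- showing that the \emph{support} of the Bergman fan of a $3$-connected matroid determines the matroid, even though the coarsest structure forgets some of the fine rays --- remains untouched, exactly as it does in the paper.
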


We will prove Proposition \ref{thm:csmisomorphism} by 
giving an alternate description of the weights of the CSM cycles; see Lemma \ref{lem:sameweights}. 
It will be clear that this description is invariant under invertible integer affine maps. 
For this purpose, we make use of the sheaves arising from tropical homology \cite{IKMZ}.

\begin{defi}\label{def:Fp}
Let $\f$ be a pure $d$-dimensional polyhedral fan in an $\RR$-vector space $V$. 
For $0\leq p\leq d$, define 
\[\Fs_p(\f) := \langle v_1 \wedge \dots \wedge v_p : v_1, \dotsc , v_p \text{ are in a common cone } \sigma \in \f \rangle \, \subseteq  \, \bigwedge\nolimits^p V.\]
By convention, if $\f \neq \emptyset$ then $F_0(\f)$ is the one-dimensional vector space $\bigwedge\nolimits^0 V$, and if $\f = \emptyset$ then $F_0(\f) = 0$. 
\end{defi}

\begin{lemma}\label{lem:invarianceFp}
Let $\f$ be a polyhedral fan in an $\RR$-vector space $V$. Then the vector spaces $\Fs_p(\f)$ from Definition \ref{def:Fp} are invariants of the support of $\Sigma$. 
Moreover, if $\f'$ is a polyhedral fan in an $\RR$-vector space $V'$ and $\phi: \f \to \f'$
is an invertible map induced by a linear map $\hat{\phi}: V \to V'$ then $\dim \Fs_p(\f) = \dim \Fs_p(\f')$ for all $p$. 
\end{lemma}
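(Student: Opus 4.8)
The plan is to reduce both assertions to a description of $\Fs_p(\f)$ purely in terms of the top-dimensional linear spans occurring in the support $|\f|$. First I would note that, since every cone of $\f$ is a face of a maximal ($d$-dimensional) cone and any vector of a cone lies in its linear span, the generators $v_1\wedge\dots\wedge v_p$ in Definition \ref{def:Fp} may be taken with $v_1,\dots,v_p$ in a common \emph{maximal} cone. For a maximal cone $\sigma$, writing $L_\sigma := \spann(\sigma)$, I claim that $\langle v_1\wedge\dots\wedge v_p : v_i\in\sigma\rangle = \textstyle\bigwedge^p L_\sigma$: the inclusion ``$\subseteq$'' is clear as $\sigma\subseteq L_\sigma$, and for ``$\supseteq$'' I would select a maximal linearly independent subset of a generating set of $\sigma$, which lies in $\sigma$ and is a basis of $L_\sigma$, and whose $p$-fold wedges both form a basis of $\bigwedge^p L_\sigma$ and appear among the generators. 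Summing over maximal cones yields the clean formula
$$\Fs_p(\f) \;=\; \sum_{\sigma \text{ maximal in } \f} \textstyle\bigwedge\nolimits^p L_\sigma \;\subseteq\; \textstyle\bigwedge\nolimits^p V,$$
with the boundary conventions for $p=0$ and $\f=\emptyset$ being immediate from the support.

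For the support-invariance claim, the key point is that the collection of subspaces $\{L_\sigma\}_{\sigma\text{ maximal}}$ is intrinsic to $|\f|$. I would call $x\in|\f|$ \emph{regular} if it has an ambient open neighbourhood $N$ with $N\cap|\f|$ an open subset of a $d$-dimensional linear subspace $L_x$; such an $L_x$ is then uniquely determined by the germ of $|\f|$ at $x$. A short argument using purity shows the regular locus is exactly the union of the relative interiors of the maximal cones, and that $L_x=L_\sigma$ when $x$ lies in the relative interior of $\sigma$: indeed every cone not containing such an $x$ forms a closed set avoiding $x$, so $x$ lies in a unique maximal cone and $|\f|$ agrees with $L_\sigma$ near $x$. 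Since the regular locus and the assignment $x\mapsto L_x$ are defined solely through the topological subspace $|\f|$, the set $\{L_\sigma\}$, and hence $\Fs_p(\f)$ via the formula above, depends only on $|\f|$.

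For the statement about the linear map $\hat\phi\colon V\to V'$, I would transport the formula through $\textstyle\bigwedge^p\hat\phi$. As $\phi=\hat\phi|_{|\f|}$ is a homeomorphism onto $|\f'|$ and regularity is intrinsic, $\phi$ carries regular points to regular points; for a maximal cone $\sigma$, invariance of domain applied to $\phi$ forces $\hat\phi(L_\sigma)$ to be $d$-dimensional, so $\hat\phi|_{L_\sigma}\colon L_\sigma\to \hat\phi(L_\sigma)=L_{\phi(\sigma)}$ is an isomorphism. Running over all maximal cones gives $\{L_{\sigma'}\}_{\sigma'\text{ maximal in }\f'}=\{\hat\phi(L_\sigma)\}$, whence $\textstyle\bigwedge^p\hat\phi$ maps each $\bigwedge^p L_\sigma$ isomorphically onto a summand of $\Fs_p(\f')$ and therefore carries $\Fs_p(\f)$ \emph{onto} $\Fs_p(\f')$; in particular $\dim\Fs_p(\f)\ge\dim\Fs_p(\f')$.

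The main obstacle is upgrading this surjection to an isomorphism, i.e.\ controlling the kernel of $\textstyle\bigwedge^p\hat\phi$ on $\Fs_p(\f)$: although $\hat\phi$ is injective on each individual $L_\sigma$, a general linear map need not be injective on their sum $\spann|\f|$, and without further hypotheses the dimension genuinely can drop (for instance, two distinct rays can be mapped linearly onto a single line). The resolution is that invertibility of $\phi$ forces its inverse to be induced by a linear map $\hat\psi\colon V'\to V$ as well, so that $\hat\psi\circ\hat\phi$ restricts to the identity on $|\f|$ and hence on $\spann|\f|$; this makes $\hat\phi|_{\spann|\f|}$ injective, $\textstyle\bigwedge^p\hat\phi$ injective on $\Fs_p(\f)\subseteq\bigwedge^p\spann|\f|$, and thus an isomorphism $\Fs_p(\f)\to\Fs_p(\f')$. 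Equivalently, one runs the surjection argument for both $\phi$ and $\phi^{-1}$ to obtain the two opposite inequalities; either way $\dim\Fs_p(\f)=\dim\Fs_p(\f')$ for all $p$.
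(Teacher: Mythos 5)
Your treatment of the first claim takes a genuinely different (and valid) route from the paper's: the paper passes to a common refinement of two fans with the same support and checks that $\Fs_p$ is unchanged under refinement, whereas you characterise the collection of spans $L_\sigma$ of maximal cones intrinsically via tangent spaces at regular points of $|\f|$. Both work. One small imprecision: the regular locus can be strictly larger than the union of the relative interiors of the maximal cones (e.g.\ when $|\f|$ is a linear subspace subdivided into several cones, every point of $|\f|$ is regular), but the set of tangent spaces $\{L_x\}$ at regular points still coincides with $\{L_\sigma\}$ because every regular point has a regular point of a maximal cone nearby with the same tangent space, and that set equality is all your argument actually uses.

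The genuine gap is in your resolution of the ``main obstacle'' for the second claim: the assertion that invertibility of $\phi$ forces $\phi^{-1}$ to be induced by a linear map is false. Take $V=\RR^2$, $\f$ the pure $1$-dimensional fan with rays $\cone(e_1)$ and $\cone(e_2)$; $V'=\RR$, $\f'$ the fan with rays $\cone(1)$ and $\cone(-1)$; and $\hat\phi(x,y)=x-y$. Then $\phi=\hat\phi|_{|\f|}$ is a bijection of fans carrying cones to cones, but $\phi^{-1}$ is not the restriction of any linear map $\RR\to\RR^2$, and indeed $\dim\Fs_1(\f)=2\neq 1=\dim\Fs_1(\f')$. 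So under a literal reading of the hypothesis your claimed ``forcing'' fails and the conclusion itself fails; the statement requires the stronger hypothesis that $\phi^{-1}$ is also induced by a linear map (equivalently, that $\hat\phi$ restricts to an isomorphism $\spann|\f|\to\spann|\f'|$). This is what the paper tacitly assumes --- its one-line argument simply calls $\hat\phi$ ``the isomorphism'' --- and it does hold in every application in the paper (transition functions of tropical manifolds, restrictions of invertible integer linear maps). With that hypothesis in place, both of your suggested completions (injectivity of $\hat\phi$ on $\spann|\f|$, or running your surjection argument in both directions) are correct, and your surjection argument itself is sound. You were right to flag the kernel issue that the paper glosses over, but the fix is to strengthen the hypothesis, not to derive it from invertibility of $\phi$ alone.
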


\begin{proof}
To prove that $\Fs_p(\f)$ is an invariant of the support of $\f$, it suffices to show that $\Fs_p(\f) = \Fs_p(\f')$ for $\f'$ a refinement of $\f$. Given a cone $\sigma \in \f$, let $L(\sigma)$ denote its linear span. Note that
$$\Fs_p(\f) = \big \langle \bigwedge  ^p L(\sigma) : \sigma \in \f \rangle.$$
By definition of a refinement, every cone of $\f^{'}$ is contained in a cone of $\f$ and thus we have an immediate inclusion $\Fs_p(\f') \subset \Fs_p(\f)$. 
On the other hand, given any cone $\sigma \in \f$ there must exist a cone $\sigma' \in \f^{'}$ contained in $\sigma$ such that $\dim(\sigma') = \dim(\sigma)$ and thus $L(\sigma') = L(\sigma)$.
This provides the inclusion $\Fs_p(\f) \subset \Fs_p(\f')$. 

For the second part of the statement, 
if $\f$ and $\f'$ are linearly isomorphic as described, the isomorphism $\hat{\phi}: V \to V'$ induces an isomorphism between $\Fs_p(\f) $ and $\Fs_p(\f')$. In particular, these vector spaces have the same dimension for each $p$, as claimed. 
\end{proof}

\begin{prop}\label{prop:charOS}
If $\f = \B(M)$
is the Bergman fan of a loopless rank-$(d+1)$ matroid $M$ then the polynomial
\[{\psi}_{\f}(\lambda) := \sum_{i=0}^d \, (-1)^{i} \dim \Fs_{i}(\f) \, \lambda^{d-i}\]
is equal to the reduced characteristic polynomial $\bar \chi_M(\lambda)$ of $M$.
\end{prop}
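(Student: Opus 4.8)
The plan is to reduce the statement to a single dimension count and then establish that count by induction via deletion--contraction.

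First, unwinding Definition \ref{def:Fp} for $\f=\B(M)$: the linear span of the cone $\sigma_\F$ attached to a flag of flats $\F$ is $\spann(\bar e_{F_1},\dots,\bar e_{F_k})$, and every cone is a face of a maximal one, so
\[\Fs_p(\B(M)) = \spann\bigl\{\bar e_{F_1}\wedge\cdots\wedge\bar e_{F_p} : F_1\subsetneq\cdots\subsetneq F_p \text{ a chain of proper nonempty flats of } M\bigr\}\]
inside $\bigwedge^p(\R^{n+1}/\RR\cdot\mathbf 1)$. Writing $\bar\chi_M(\lambda)=\sum_{p=0}^d(-1)^p\bar w_p\,\lambda^{d-p}$ and comparing with the definition of $\psi_\f$, the proposition becomes the purely numerical identity $\dim\Fs_p(\B(M))=\bar w_p$ for every $p$. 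Since $\B(M)$ is the image of the Bergman fan of the simplification of $M$ under an injective linear map, and $\bar\chi_M$ depends only on the lattice of flats $\L(M)$, Lemma \ref{lem:invarianceFp} lets us assume that $M$ is simple.

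Next I would induct on $|E|$. Recall that for $e$ not a coloop one has $\chi_M=\chi_{M\setminus e}-\chi_{M/e}$; dividing by $\lambda-1$ gives $\bar\chi_M=\bar\chi_{M\setminus e}-\bar\chi_{M/e}$, which on coefficients reads
\[\bar w_p(M)=\bar w_p(M\setminus e)+\bar w_{p-1}(M/e).\]
Here $M$ simple and $e$ not a coloop force $M\setminus e$ simple and $M/e$ loopless, so all terms are defined (applying the inductive hypothesis to $M/e$ through its simplification, which has fewer elements). The base case is the free matroid $U_{r,r}$ --- the only simple matroid all of whose elements are coloops --- where $\bar\chi=(\lambda-1)^d$ and a direct check shows the flag-wedges span all of $\bigwedge^p(\R^r/\RR\cdot\mathbf 1)$, so $\dim\Fs_p=\binom{d}{p}=\bar w_p$. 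Thus it suffices to prove the matching linear-algebraic recursion $\dim\Fs_p(\B(M))=\dim\Fs_p(\B(M\setminus e))+\dim\Fs_{p-1}(\B(M/e))$.

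To obtain this I would construct a short exact sequence
\[0 \longrightarrow \Fs_{p-1}(\B(M/e)) \longrightarrow \Fs_p(\B(M)) \xrightarrow{\,\bigwedge^p\bar\pi\,} \Fs_p(\B(M\setminus e)) \longrightarrow 0,\]
where $\bar\pi\colon\R^{E}/\RR\cdot\mathbf 1\to\R^{E\setminus e}/\RR\cdot\mathbf 1$ forgets the $e$-coordinate, so $\bar\pi(\bar e_F)=\bar e_{F\setminus e}$. Since $\text{cl}_M(F\setminus e)\subseteq F$, each $F\setminus e$ is again a flat of $M\setminus e$, and $\bigwedge^p\bar\pi$ carries a flag-wedge of $M$ to a flag-wedge of $M\setminus e$ (it vanishes exactly when two consecutive flats satisfy $F_{i+1}=F_i\cup e$); as every flat of $M\setminus e$ is the image of its $M$-closure, this map is surjective onto $\Fs_p(\B(M\setminus e))$. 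The remaining, and main, task is to identify the kernel with $\Fs_{p-1}(\B(M/e))$ using the order-isomorphism $G\mapsto G\cup e$ between flats of $M/e$ and flats of $M$ containing $e$; the natural candidate is a contraction (interior product) by the coordinate $e$, whose delicacy is precisely that $e^\ast$ is not well defined on the quotient by $\RR\cdot\mathbf 1$, so the map must be set up with care.

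The main obstacle is therefore the exactness of this sequence: establishing that the kernel of $\bigwedge^p\bar\pi$ is exactly the image of the flag-wedges built from flats containing $e$, and that this image is an isomorphic copy of $\Fs_{p-1}(\B(M/e))$. Both the injectivity on the kernel side and the equality $\ker=\operatorname{image}$ require understanding which linear combinations of flag-wedges vanish in $\bigwedge^p(\R^{E}/\RR\cdot\mathbf 1)$, and this is the single place where the combinatorics of the lattice of flats under contraction genuinely enters rather than formal manipulation. As a consistency check one may verify the equivalent affine statement $\dim\Fs_p(\hat\B(M))=w_p$, the unsigned Whitney number of the first kind, which follows from $\dim\Fs_p(\hat\B(M))=\dim\Fs_p(\B(M))+\dim\Fs_{p-1}(\B(M))$ together with $w_p=\bar w_p+\bar w_{p-1}$.
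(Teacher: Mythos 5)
Your reduction of the proposition to the numerical identity $\dim\Fs_p(\B(M))=\bar w_p$ is correct, the base case $U_{r,r}$ checks out, and the surjectivity of $\bigwedge^p\bar\pi$ onto $\Fs_p(\B(M\setminus e))$ is argued adequately. However, the proof has a genuine gap, and you have located it yourself: the exactness of the sequence
\[0 \longrightarrow \Fs_{p-1}(\B(M/e)) \longrightarrow \Fs_p(\B(M)) \longrightarrow \Fs_p(\B(M\setminus e)) \longrightarrow 0\]
at the left and middle terms is asserted as "the remaining, and main, task" but never carried out. This is not a routine verification that can be deferred: identifying $\ker(\bigwedge^p\bar\pi)$ with the span of flag-wedges through flats containing $e$, and showing that span is an isomorphic copy of $\Fs_{p-1}(\B(M/e))$, requires controlling exactly which linear relations hold among flag-wedges in $\bigwedge^p(\R^E/\RR\cdot\mathbf 1)$ --- which is precisely the content of the proposition. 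Indeed, your sequence is the vector-space dual of the degree-$p$ piece of the standard deletion--restriction short exact sequence for the Orlik--Solomon algebra, so completing this step amounts to reproving that $\bigoplus_p\Fs_p(\B(M))^*$ is the Orlik--Solomon algebra of $M$. As it stands, the argument establishes only the inequality $\dim\Fs_p(\B(M))\le\dim\Fs_p(\B(M\setminus e))+\dim\Fs_{p-1}(\B(M/e))$ (from surjectivity plus the fact that flag-wedges through flats containing $e$ land in the kernel and are spanned by at most $\dim\Fs_{p-1}(\B(M/e))$... and even that bound needs the well-definedness of the left-hand map, which you flag as delicate because $e^*$ does not descend to the quotient by $\RR\cdot\mathbf 1$).

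For comparison, the paper's proof avoids this entirely: it cites Zharkov's theorem that the graded dual $\bigoplus_p\Fs^p(\f)$, with the product induced by the wedge, is naturally isomorphic to the Orlik--Solomon algebra of $M$, whence the dimensions are the coefficients of $\bar\chi_M$ by classical Orlik--Solomon theory. Your deletion--contraction strategy is a viable route to a self-contained proof and would be a genuinely different argument, but to close the gap you must either prove the kernel identification directly (e.g.\ by exhibiting the "broken-circuit" type basis of flag-wedges and tracking it through deletion and contraction) or invoke the Orlik--Solomon deletion--restriction sequence together with Zharkov's identification --- at which point you may as well use the paper's one-step argument.
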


\begin{proof}
Consider the dual vector spaces $\Fs^p(\f) := \Fs_p(\f)^*$. Together, the $\Fs^p(\f)$ form a graded algebra with the product induced by the wedge product \cite[Lemma 2]{Zharkov:Bergman}. Moreover, this algebra is naturally isomorphic to the Orlik-Solomon algebra of the matroid $M$ \cite[Theorem 4]{Zharkov:Bergman}. It follows that the dimensions of the graded pieces of this algebra are the coefficients of the reduced characteristic polynomial of $M$, as claimed.  
\end{proof}

It follows from the above proposition that we can recover the weight of the $0$-th dimensional CSM cycle of $M$ from the fan $\f = B(M)$ by setting $\lambda = 1$ in the polynomial ${\psi}_{\f}(\lambda)$. 
Our goal is to recover the weights of the faces in $\csm_k$ in a similar fashion. 

If $\f$ is a polyhedral fan in a vector space $V$ and $\tau$ is a cone of $\f$, the \textbf{star} of 
$\tau$ in $\f$ is the polyhedral fan $\starr_\tau(\f)$ consisting of all cones of the form
\[ \tilde{\sigma} := \{ \lambda(x - z) : \lambda \geq 0, x \in \sigma, \text{and } z \in \tau\}\]
for any cone $\sigma$ of $\f$ containing $\tau$ as a face.
The \textbf{lineality space} of $\f$ is the maximal linear subspace $L$ such that 
$x + L \in |\f|$ for all  $x \in |\f|$.

\begin{lemma}\label{lem:sameweights}
Let $\f = \B(M)$ be the Bergman fan of a loopless matroid $M$.
For any $k$-dimensional cone $\sigma \in \f$ the polynomial $\psi_{{\starr_{\sigma}(\f)}}(\lambda)$
is divisible by $(\lambda -1)^{k-1}$ and
\begin{equation}\label{eqn:sameweights}
 w_{\csm_k(M)}(\sigma)
= \left.\frac{\psi_{{\starr_{\sigma}(\f)}}(\lambda)}{(\lambda -1)^{k-1}}\right|_{\lambda = 1}.
 \end{equation}
\end{lemma}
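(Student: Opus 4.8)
The plan is to localise at $\sigma$ via the star, decompose the star as a product, and exploit that the invariants $\Fs_p$ (hence $\psi$) are both support-invariant and multiplicative, so that $\psi_{\starr_\sigma(\f)}$ factors into the contribution of a lineality space and of Bergman fans of minors, each of which I understand explicitly.

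Write $\sigma=\sigma_\F$ for the flag $\F:\emptyset=F_0\subsetneq F_1\subsetneq\dots\subsetneq F_k\subsetneq F_{k+1}=\{0,\dots,n\}$, so that $\dim\sigma=k$ and $L(\sigma)$ is $k$-dimensional. The first and main input is the local product structure of Bergman fans: the cones of $\B(M)$ containing $\sigma$ are indexed by chains of flats refining $\F$, and matching such a chain with the tuple of induced chains on the intervals $[F_i,F_{i+1}]$ gives an identification of fans
\[\starr_{\sigma}(\B(M))\;\cong\;L(\sigma)\times\prod_{i=0}^{k}\B\!\left(M|F_{i+1}/F_i\right),\]
where the ground sets $F_{i+1}\setminus F_i$ partition $\{0,\dots,n\}$ and the product is taken in the corresponding direct-sum decomposition of $\R^{n+1}/\R\mathbf 1$; equivalently, the right-hand side is $\B\big(\bigoplus_i M|F_{i+1}/F_i\big)$. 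By Lemma \ref{lem:invarianceFp} the numbers $\dim\Fs_p$, and hence $\psi$, are unchanged by this integer-linear identification, so I may compute $\psi_{\starr_\sigma(\f)}$ from the product.

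Next I would prove multiplicativity of $\Fs_p$. For fans $\f_1\subset V_1$ and $\f_2\subset V_2$ the maximal cones of $\f_1\times\f_2$ have linear spans $L(\sigma_1)\oplus L(\sigma_2)$, and $\bigwedge^p\big(L(\sigma_1)\oplus L(\sigma_2)\big)=\bigoplus_{a+b=p}\bigwedge^a L(\sigma_1)\otimes\bigwedge^b L(\sigma_2)$ gives $\Fs_p(\f_1\times\f_2)=\bigoplus_{a+b=p}\Fs_a(\f_1)\otimes\Fs_b(\f_2)$; since $\dim(\f_1\times\f_2)=\dim\f_1+\dim\f_2$ this yields $\psi_{\f_1\times\f_2}=\psi_{\f_1}\,\psi_{\f_2}$. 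Two base cases then evaluate the factors: for a $j$-dimensional linear space $L$ one has $\dim\Fs_p(L)=\binom{j}{p}$, hence $\psi_L(\lambda)=(\lambda-1)^j$; and for a Bergman fan Proposition \ref{prop:charOS} gives $\psi_{\B(N)}(\lambda)=\bar\chi_N(\lambda)$. Combining everything,
\[\psi_{\starr_{\sigma}(\f)}(\lambda)\;=\;(\lambda-1)^{\dim\sigma}\prod_{i=0}^{k}\bar\chi_{M|F_{i+1}/F_i}(\lambda),\]
which in particular exhibits the asserted divisibility.

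Finally, dividing out the lineality factor $(\lambda-1)^{\dim\sigma}$ and setting $\lambda=1$ leaves $\prod_{i=0}^{k}\bar\chi_{M|F_{i+1}/F_i}(1)$. Applying the standard property $\bar\chi_N(1)=(-1)^{r(N)-1}\beta(N)$ of Crapo's beta invariant to each factor, and using $\sum_{i=0}^{k}\big(r(M|F_{i+1}/F_i)-1\big)=(d+1)-(k+1)=d-k$, this product becomes $(-1)^{d-k}\prod_{i=0}^{k}\beta(M|F_{i+1}/F_i)=w_{\csm_k(M)}(\sigma)$, as desired. I expect the main obstacle to be the first step: establishing the product decomposition of $\starr_\sigma(\B(M))$ and, in particular, pinning down the dimension of its lineality space --- namely $\dim\sigma$ --- so that exactly the right power of $(\lambda-1)$ is divided out before evaluating at $1$. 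The remaining ingredients (multiplicativity of $\Fs_p$, the evaluations $\psi_L=(\lambda-1)^j$ and $\psi_{\B(N)}=\bar\chi_N$, and the $\beta$--$\bar\chi$ identity) are then routine.
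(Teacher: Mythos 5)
Your proposal is correct and follows essentially the same route as the paper: identify $\starr_{\sigma}(\f)$ with the Bergman fan of $\bigoplus_{i}M|F_{i+1}/F_i$, reduce to Proposition \ref{prop:charOS}, use multiplicativity, and evaluate at $\lambda=1$ via $\bar\chi_N(1)=(-1)^{r(N)-1}\beta(N)$. The only methodological difference is where multiplicativity is proved: the paper applies Proposition \ref{prop:charOS} once to the direct-sum matroid $M_\sigma$ and then uses multiplicativity of matroid characteristic polynomials, while you factor the fan as $L(\sigma)\times\prod_i\B(M|F_{i+1}/F_i)$ and prove $\psi_{\f_1\times\f_2}=\psi_{\f_1}\psi_{\f_2}$ directly at the level of the spaces $\Fs_p$, handling the lineality factor $(\lambda-1)^{\dim\sigma}$ by hand. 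Both are sound; your version makes the power of $(\lambda-1)$ coming from the lineality space explicit.

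That explicitness pays off: your computation gives $\psi_{\starr_{\sigma}(\f)}=(\lambda-1)^{k}\prod_{i=0}^{k}\bar\chi_{M|F_{i+1}/F_i}$, i.e.\ divisibility by $(\lambda-1)^{k}$ and the weight obtained by dividing by $(\lambda-1)^{k}$ before evaluating --- not $(\lambda-1)^{k-1}$ as in the printed statement. Your exponent is the correct one: the product has $k+1$ factors, so $(\lambda-1)^{-1}\prod_{i=0}^{k}\chi_{M_i}=(\lambda-1)^{k}\prod_{i=0}^{k}\bar\chi_{M_i}$, and with the printed exponent $k-1$ the right-hand side of \eqref{eqn:sameweights} would vanish identically. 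This is confirmed by the paper's own later uses of the lemma: in Example \ref{ex:unbalancedHypersurface} one divides $(\lambda-1)(\lambda-2)$ by $(\lambda-1)$ (that is, by $(\lambda-1)^{k}$ with $k=1$) to get $-1$, and in the proof of Lemma \ref{lem:localcontribution} the weight at a vertex ($k=0$) is taken to be $\psi(1)$ itself. So the statement and the displayed identity in the paper's proof carry an off-by-one typo, which your argument detects and corrects; there is no gap on your side.
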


\begin{proof}
Suppose $\sigma$ is a $k$-dimensional face of $\B(M)$ corresponding to the chain of flats
$\F = \{\emptyset \subsetneq F_1 \subsetneq F_2 \subsetneq \dotsb \subsetneq F_k \subsetneq \{0,\dotsc, n\}\}$ of $M$. 
The fan $\starr_{\sigma}(\f)$ is the Bergman fan of the matroid 
$M_\sigma := \bigoplus_{i=0}^k M|F_{i+1}/F_i$; see for example \cite[Corollary 4.4.8]{MaclaganSturmfels}.
By Proposition \ref{prop:charOS}, the polynomial $\psi_{\starr_{\sigma}(\f)}(\lambda)$ is the reduced characteristic polynomial of the matroid $M_\sigma$.
The characteristic polynomial of $M_\sigma$ is the product of the characteristic polynomial of its components $M_i := M|F_{i+1}/F_i$, hence, 
$$\psi_{\starr_{\sigma}(\f)}(\lambda) = (\lambda -1)^{-1}\prod_{i = 0}^k \chi_{M_i}(\lambda) = (\lambda -1)^{k-1}\prod_{i = 0}^k \bar{\chi}_{M_i}(\lambda).$$ 
As $\bar{\chi}_{M_i}(1) = (-1)^{r(M_i)-1} \beta(M_i)$, in view of Definition \ref{def:chernweight} this directly implies the desired result. 
\end{proof}

We have now all the pieces for the proof of Proposition \ref{thm:csmisomorphism}.

\begin{proof}[Proof of Proposition \ref{thm:csmisomorphism}]
The result follows from Lemma \ref{lem:sameweights} together with the second part of Lemma \ref{lem:invarianceFp}, as the dimensions of the vector spaces $\Fs_p(\f)$ are invariant under invertible linear maps. 
\end{proof}

We present the following lemma which relates the CSM cycles of the different strata of  $\Sigma_M \times \T^r$ and will be useful in future sections. 

\begin{lemma}\label{lem:csmtransbdy}
Let $\sigma$ be a face of a matroid fan $\Sigma_M$ in $\R^n$, and consider 
the corresponding face $\sigma \times \T^r_I$ in $X := \Sigma_M \times \T^r$. Then for subsets $I \subseteq J$ we have 
\[w_{\csm_k(X)}(\sigma \times \T^r_I) = w_ {\csm_{k + |I| -|J|} (X)}(\sigma \times \T^r_J).\] 
\end{lemma}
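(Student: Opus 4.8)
The plan is to show that each of the two weights equals the ordinary CSM weight $w_{\csm_j(\Sigma_M)}(\sigma)$ of $\sigma$ inside $\Sigma_M$ itself, where $j := \dim\sigma$; this immediately forces the two weights to agree. First I would record the dimension bookkeeping. The cone $\sigma \times \T^r_I$ has dimension $\dim\sigma + (r - |I|)$, so it is a top-dimensional cone of $\csm_k(X)$ exactly when $\dim \sigma = k - r + |I|$; carrying out the same computation on the stratum of sedentarity $J$, where the relevant CSM index is $k + |I| - |J|$, again gives $\dim\sigma = k - r + |I|$. Hence a single cone $\sigma$, of dimension $j := k - r + |I|$, governs both sides, and by Definition \ref{def:csmInTn} the two weights are CSM weights of the matroid fans of $C_{r-|I|} \oplus M$ and $C_{r-|J|} \oplus M$, these being the strata $\T^r_I \times \Sigma_M$ and $\T^r_J \times \Sigma_M$ of $X$.

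The key geometric step is to identify the relevant stars. Under the identification $\T^r_I \cong \R^{r-|I|}$, the factor $\R^{r-|I|}$ is a lineality space of the fan $\T^r_I \times \Sigma_M = \B(C_{r-|I|}\oplus M)$, because the free summand $C_{r-|I|}$ contributes no circuits. Since taking stars commutes with multiplication by a lineality space, and since $\starr_\sigma(\Sigma_M) = \B(M_\sigma)$ with $M_\sigma := \bigoplus_i M|F_{i+1}/F_i$ the star matroid of $\sigma$ (as in the proof of Lemma \ref{lem:sameweights}), I obtain
$$\starr_{\sigma \times \T^r_I}(\T^r_I \times \Sigma_M) = \R^{r-|I|} \times \B(M_\sigma) = \B(C_{r-|I|} \oplus M_\sigma),$$
and likewise with $I$ replaced by $J$.

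I would then feed this into Lemma \ref{lem:sameweights} together with the multiplicativity of the characteristic polynomial under direct sums. Since $\chi_{C_m}(\lambda) = (\lambda - 1)^m$ and $\chi_{A \oplus B} = \chi_A \, \chi_B$, Proposition \ref{prop:charOS} gives $\psi_{\B(C_m \oplus M_\sigma)}(\lambda) = \bar{\chi}_{C_m \oplus M_\sigma}(\lambda) = (\lambda - 1)^m \, \bar{\chi}_{M_\sigma}(\lambda)$. Substituting $m = r - |I|$ together with the cone dimension $k$ into Lemma \ref{lem:sameweights}, and then $m = r - |J|$ together with the cone dimension $k + |I| - |J|$, expresses the two weights as values at $\lambda = 1$ of $(\lambda - 1)^{r-|I|}\bar{\chi}_{M_\sigma}(\lambda)$ and $(\lambda - 1)^{r-|J|}\bar{\chi}_{M_\sigma}(\lambda)$, each divided by the appropriate power of $(\lambda - 1)$. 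The extra power of $(\lambda - 1)$ coming from the free coordinates is in each case exactly compensated by the shift between the CSM index and the cone dimension, so both weights collapse to the same value $w_{\csm_j(\Sigma_M)}(\sigma)$, the CSM weight of $\sigma$ computed directly in $\Sigma_M$, and are therefore equal. Equivalently, at the level of Definition \ref{def:chernweight}, the $r - |I|$ coloop minors appearing in a flag for $\sigma \times \T^r_I$ each contribute the factor $\beta(\text{coloop}) = 1$ and thus drop out of the product of $\beta$-invariants, while the matching of the global signs across the two strata is immediate.

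The step I expect to require the most care is the interaction between lineality and the fine fan structure: although $\R^{r-|I|}$ is a lineality space of the support, it is subdivided in the fine Bergman-fan structure of $C_{r-|I|} \oplus M$, so the coarse cone $\sigma \times \T^r_I$ is a union of fine cones and one must justify applying the star computation, and hence Lemma \ref{lem:sameweights}, to it. This can be handled either by passing to a maximal fine subcone, whose star has the same support $\B(C_{r-|I|}\oplus M_\sigma)$, or by appealing to the fact that the CSM weight depends only on the local support via Lemma \ref{lem:invarianceFp} and Proposition \ref{thm:csmisomorphism}. The remaining characteristic-polynomial algebra is routine, and the cancellation of the powers of $(\lambda - 1)$ is symmetric in $I$ and $J$, so the final equality is robust.
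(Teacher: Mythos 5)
Your proof is correct and follows essentially the same route as the paper: identify the star of $\sigma \times \T^r_I$ as the Bergman fan of $N \oplus C_{r-|I|}$ (with $N$ the star matroid of $\sigma$ in $\Sigma_M$), invoke multiplicativity of the characteristic polynomial and $\chi_{\text{coloop}}(\lambda)=\lambda-1$, and conclude via Lemma \ref{lem:sameweights} and Proposition \ref{prop:charOS}. Your extra bookkeeping — checking that both sides concern the same cone $\sigma$, and verifying that the powers of $(\lambda-1)$ cancel symmetrically — is a correct elaboration of the step the paper leaves implicit.
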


\begin{proof}
We may suppose that $\dim(\sigma \times \T^r_I) = k$ otherwise the weights on both sides are $0$.
Let $N$ be the matroid corresponding to the matroid fan $\starr_{\sigma}(\Sigma_M)$.
The underlying matroid for the fan $\starr_{\sigma \times \T^r_I}(\Sigma_M \times \T^r_I)$ is $N \oplus C_{r-|I|}$, where $C_{r-|I|}$ is the matroid which is a direct sum of $r - |I|$ coloops. 
The (non-reduced) characteristic polynomials of matroids  are multiplicative under direct summation, so we obtain
$$\chi_{N \oplus C_{r-|I|}} (\lambda) = (\lambda-1)^{r-|I|} \chi_N(\lambda)  = (\lambda-1)^{|J|-|I|} \chi_{N \oplus C_{r-|J|}} (\lambda),$$
since the characteristic polynomial of a single coloop is $\lambda-1$. 
Now the formula for the weights of the CSM cycles follows 
from their description in Lemma \ref{lem:sameweights} and Proposition \ref{prop:charOS}. 
\end{proof}

One could hope that the recipe for the weights of CSM cycles using the dimensions of the vector spaces $\Fs_p(\f)$ produces balanced cycles for any balanced rational polyhedral fan $\Sigma$. However, this is not the case, as the next examples show. 
  
\begin{exa}\label{ex:unbalancedHypersurface}
Consider the fan tropical hypersurface $X \subset \RR^3$ dual to the polytope 
$$P = \text{ConvexHull}\{ (0, 0, 0), (1, 0, 0), (1, 1, 0), (0, 1, 0), (0, 0, 1)\}.$$
The fan $X$ is a $2$-dimensional fan consisting of $5$ rays, with primitive
integer directions $$(0, 0, -1), (-1, 0, 0), (0, -1, 0), (1, 0, 1), (0, 1,
1),$$ and eight $2$-dimensional faces. 
Using the definition of weights from Equation (\ref{eqn:sameweights}) on the $1$-skeleton of $X$ does not produce a balanced tropical cycle. 
If $\sigma$ is any of the $5$ rays of $X$ listed above then 
$$\psi_{{\starr_{\sigma}(\f)}}(\lambda) = \lambda^2 - 3\lambda + 2 = (\lambda-1) (\lambda-2),$$
therefore,  Equation (\ref{eqn:sameweights}) produces $w_{\csm_1(X)}(\sigma) = -1$.
From the above list of directions of these rays
we see that constant weights do not produce a balanced $1$-dimensional fan. 

However, we remark that since $X$ is a tropical hypersurface, \cite{BertrandBihan} give canonical weights on the $1$-skeleton of $X$ that make it balanced. 
\end{exa}
 
\begin{exa}
Consider the $2$-dimensional fan $X$ in $\R^4$ defined in \cite[Section 5.6]{BabaeeHuh}. 
This fan has $14$ rays, in directions
$$ e_1,  \pm e_2, \pm e_3, \pm  e_4, f_1, \pm f_2, \pm f_3, \pm f_4,$$
where $f_1, f_2, f_3, f_4$ are the rows of the matrix, 
$$\begin{array}{rrrr}
0 & 1 & 1 & 1 \\ 
1 & 0 & -1 & 1 \\ 
1 & 1 & 0 & -1 \\ 
1 & -1 & 1 & 0.
\end{array} 
$$
See \cite[Section 5.6]{BabaeeHuh} for the description of the $2$-dimensional faces of $X$. 
Upon calculating the polynomials $\psi_{{\starr_{\sigma}(\f)}}(\lambda)$, we find that  a ray $\sigma$ in a direction 
$e_1, \dots, e_4, f_1, \dots, \text{ or } f_4$ has
$w_{\csm_1(X)}(\sigma) = -1$ whereas a  ray $\sigma$ in direction 
$-e_2, -e_3, -e_4, -f_2, -f_3, \text{ or } -f_4$ has $w_{\csm_1(X)}(\sigma) = 0$. It can be checked that these weights the $1$-skeleton do not satisfy the balancing condition. 
\end{exa}

\begin{exa}
A naive approach to extend the definition of CSM cycles to more general balanced polyhedral fans would be to insist that the additivity property of classical 
CSM classes should also hold in the tropical situation.
Concretely, if the indicator function of a balanced polyhedral fan $\Sigma$ can be expressed as an integer linear combination of indicator functions of matroidal fans, we could try to define the CSM cycles of $\Sigma$ as the corresponding linear combination of the CSM cycles of the matroidal fans. 
However, this does not always produce well-defined tropical cycles, as the sum of the corresponding CSM cycles might depend on the chosen decomposition of $\Sigma$.

As an example, consider the $1$-dimensional fan $\Sigma$ in $\R^2$ with 1 vertex, 6 rays, and support $|\Sigma|=\bigcup_{i=0}^2\spann_\R(\bfe_i)$, 
where $\bfe_1=(1,0)$, $\bfe_2=(0,1)$, and $\bfe_0=(-1,-1)$.
The fan $\Sigma$ is balanced when equipped with weights equal to $1$ on all top dimensional faces.  
The indicator function of $\Sigma$ can be decomposed as a signed sum of indicator functions of matroidal fans in two different ways. 
On the one hand, it decomposes as the sum of $\B(U_{2, 3})$ and $\text{crem}(\B(U_{2, 3}))$ minus the origin $\mathbf{p}$, where $\text{crem} \colon \R^2 \to \R^2$ is the linear map negating both coordinates. 
On the other, it decomposes as the sum of the three lines $\B(M_i) := \spann_{\R}(\bfe _i)$
for $i = 0, 1, 2$, minus two times the origin $\mathbf{p}$. 
\begin{figure}[ht]
\begin{center}
\includegraphics[scale=0.54]{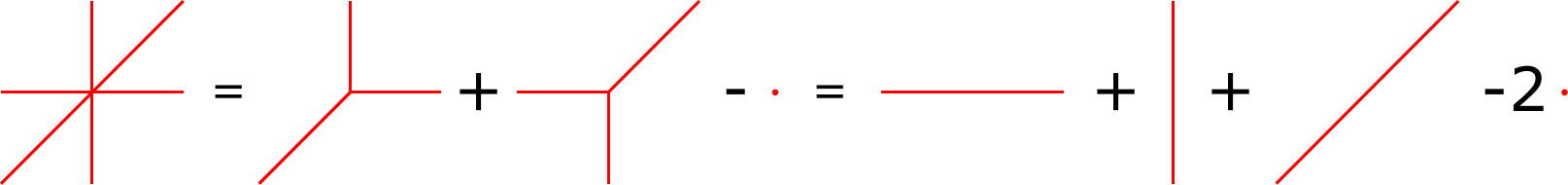}
\caption{Two distinct decompositions of a balanced fan as a (signed) sum of matroidal fans.}
\label{figdecompositions}
\end{center}
\end{figure}
However, the first decomposition yields $\csm_0(U_{2, 3}) + \text{crem}(\csm_0(U_{2, 3})) - \csm_0(U_{1,3}) = (-1-1-1) \mathbf{p} = -3 \mathbf{p}$,
while the second yields $\sum_{i=0}^2 \csm_0(M_i) - 2 \csm_0(U_{1,3}) = (0 + 0 + 0 - 2)\mathbf{p} = -2 \mathbf{p}$.
\end{exa}

\section{Correspondence theorems} 
\label{sec:correspondence}

In this section we establish Theorem \ref{thm:Eulercharacteristic} relating the Euler characteristic of complex algebraic varieties to the degree of $\csm_0$ of their tropicalisation, in the case this tropicalisation is a tropical manifold. 

For $X$ a tropical manifold and $A$ a $0$-dimensional tropical cycle in $X$, let $m_x(A)$ denote the multiplicity of the point $x \in X$ in $A$. 
We denote $$\deg(A) := \sum_{x \in A} m_x(A).$$  
Note that in the case where $A = \csm_0(X)$, 
the sign of the multiplicity $m_x(\csm_0(X))$ depends only on the sedentarity of $x$, 
and thus if $X$ is of dimension $d$ and has only points of sedentarity $0$ 
then $(-1)^d\deg(\csm_0(X)) > 0$.

Suppose  $\mathcal{X}$ is  a meromorphic family of subvarieties of a complex toric variety $\mathcal{Y}$ defined over the punctured disc $ \mathcal{D}^*$. 
 Let $Y$ denote the tropicalization of the toric variety $\mathcal{Y}$ and let $X = \trop(\mathcal{X})$ be the extended tropicalisation of $\mathcal{X} $ in $Y$  in the sense of Payne \cite{Payne}. 
Consider the fibre $\mathcal{X}_t := f^{-1}(t)$ for a generic $t \in \mathcal{D}^*$. 
The following theorem relates the Euler characteristic of $\mathcal{X}_t$ to $\csm_0(X)$ when $X$ is a tropical manifold.

\begin{thm}
\label{thm:Eulercharacteristic}
 Let $\mathcal{X}$ be a meromorphic family of subvarieties of a complex toric variety $\mathcal{Y}$ over the punctured disk $\mathcal{D}^*$. Suppose that $X := \trop(\mathcal{X})$ is a tropical submanifold of $Y := \trop(\mathcal{Y})$, then 
  $$\chi(\mathcal{X}_t) = \deg(\csm_0(X)).$$
\end{thm}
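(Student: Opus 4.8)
The plan is to let $t \to 0$, express $\chi(\mathcal X_t)$ as a sum of local Euler-characteristic contributions indexed by the vertices of $X$, and match each contribution with the corresponding weight of $\csm_0(X)$. I would invoke the tropical motivic nearby fibre of Katz and Stapledon \cite{KatzStapledon}: because $X = \trop(\mathcal X)$ is a tropical submanifold, the family is sufficiently regular for their formula to apply, and the nearby fibre is assembled from the initial degenerations $\In_w(\mathcal X)$ as $w$ ranges over the polyhedral complex underlying $X$. Taking Euler characteristics (the specialization of the associated $E$-functions at $u = v = 1$), I would exploit the following vanishing: if $w$ lies in the relative interior of a cell $\sigma$ with $\dim\sigma \geq 1$, then $\In_w(\mathcal X)$ is invariant under the subtorus of the ambient orbit torus corresponding to $\spann(\sigma)$, hence carries a free $\mathbb{C}^*$-action and has vanishing Euler characteristic. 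Only the $0$-dimensional cells of $X$ survive, and since Euler characteristic is additive over the torus-orbit stratification of $\mathcal Y$ --- which is what records the sedentarity of the vertices of the extended tropicalisation --- one obtains
$$\chi(\mathcal X_t) = \sum_{v} \chi(\In_v(\mathcal X)),$$
the sum running over all vertices $v$ of $X$, of every sedentarity.

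It then remains to identify each summand. At a vertex $v$ the star $\starr_v(X)$ is a matroid fan $\Sigma_{M_v}$, and $\In_v(\mathcal X)$ is a very affine variety with this tropicalisation lying in the torus of the orbit containing $v$. By the correspondence for CSM cycles of matroid fans from \cite{LdMRS}, its Euler characteristic equals $\deg(\csm_0(\Sigma_{M_v})) = \bar\chi_{M_v}(1)$, which is precisely $w_{\csm_0(X)}(v)$ by the $\lambda = 1$ evaluation in Proposition \ref{prop:charOS} together with Lemma \ref{lem:sameweights}. Summing over all vertices and using Definition \ref{def:csmManifold} gives $\chi(\mathcal X_t) = \sum_v w_{\csm_0(X)}(v) = \deg(\csm_0(X))$, as desired.

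The main obstacle I expect is making the nearby-fibre reduction rigorous, rather than the final bookkeeping. Concretely, one must verify that the tropical-submanifold hypothesis really furnishes the regularity needed for the Katz--Stapledon formula and for the cell-by-cell torus-action vanishing --- for instance, that every initial degeneration is schön with the expected torus symmetry --- and one must confirm that the combinatorial contributions in their formula agree with the CSM weights, with the correct signs, uniformly across all sedentarity strata. Establishing this matching, namely that the only cells surviving the $u = v = 1$ specialization are the vertices, each contributing its local reduced characteristic polynomial evaluated at $1$, is the technical heart of the argument.
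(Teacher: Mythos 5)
Your proposal is correct and follows essentially the same route as the paper: the Katz--Stapledon formula reduces $\chi(\mathcal{X}_t)$ to a sum over vertices of Euler characteristics of initial degenerations, the toric boundary is handled by additivity of (compactly supported) Euler characteristics over orbit strata, and each local contribution is identified with the corresponding weight of $\csm_0(X)$. The one input you gloss over --- and which the paper supplies via \cite[Proposition 4.2]{KatzPayne} --- is that a very affine variety whose tropicalisation is a multiplicity-one Bergman fan is, up to an invertible monomial change of coordinates, a hyperplane arrangement complement; this is what licenses applying the \cite{LdMRS} correspondence (equivalently, the Orlik--Terao computation of the Euler characteristic as a signed beta invariant) to each $\In_v(\mathcal{X})$, and it is precisely the ``regularity'' point you flag at the end.
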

 
\begin{proof}
Consider first the case when  $\mathcal{X}$ is a family of very affine subvarieties, namely is contained in $(\C^*)^N \times \mathcal{D^*}$. 
 Then $\trop(\mathcal{X}) = X \subset \R^N$. Let   ${\Vertices}(\hat{X})$ denote the vertex set of $X$ when equipped with some polyhedral structure $\hat{X}$. 
 By \cite[Corollary 1.4]{KatzStapledon} we have
 $$\chi(\mathcal{X}_t)  = \sum_{v \in {\Vertices}(\hat{X})} \chi(V_{\C}(\In_v \mathcal{X})),$$
 where $\In_v \mathcal{X}$ denotes the initial ideal of $\mathcal X$ at the vertex $v $ of $X$ and $V_{\C}(\In_v \mathcal{X})$ its variety over $\mathbb{C}$.
 
By \cite[Proposition 4.2]{KatzPayne}, since $X$ is a tropical manifold, the initial ideal $\In_v \mathcal{X} \subseteq \mathbb{C}[x_1^{\pm}, \dots, x_N^{\pm}]$ must be generated by linear forms up to toric coordinate change. This means that $V_{\C}(\In_v \mathcal{X})$ is the image under an invertible monomial map of the complement of a hyperplane arrangement. A monomial automorphism of $(\mathbb{C}^*)^n$ is a homeomorphism, and thus the cohomology groups and hence Euler characteristic of the complex points of $V_{\C}(\In_v \mathcal{X})$ are determined by the underlying matroid $M_v$; see  \cite{OrlikTerao}. 
 Namely, the Euler characteristic is up to sign the beta invariant of $M_v$. Therefore we obtain   
  $$\chi(\mathcal{X}_t)  = (-1)^d \sum_{v \in {\Vertices}(\hat X)} \beta(M_v) = 
  \sum_{v \in {\Vertices}(\hat X)} m_v(\csm_0(X)) = \sum_{x \in X} m_x(\csm_0(X)).$$ 
  The last equality holds since $m_x(\csm_0(X)) = 0$ if $x$ lies in a positive-dimensional face of $\hat X$. 
 This proves the statement when $\mathcal{Y}$ is a torus. 
  
Consider now the case where $\mathcal{X}$ is a meromorphic family in 
$\mathcal{Y} \times \mathcal{D}^*$ 
where $\mathcal{Y}$ is some complex toric variety. 
  Let $\Sigma$ be the fan defining $\mathcal{Y}$.  We will apply the above proof for subvarieties of $(\C^*)^N$ to the open strata $\mathcal{X}_{\sigma} := \mathcal{X} \cap (\mathcal{Y}_{\sigma} \times \mathcal{D}^*)$, where $\mathcal{Y}_{\sigma}$ is the open toric strata of $\mathcal{Y}$ corresponding to the cone  $\sigma $ of $\Sigma$. Let $X_{\sigma}$ denote the tropicalisation of $\mathcal{X}_{\sigma}$ in open toric strata
  $\mathcal{Y}_{\sigma}$ corresponding to $\sigma$. 
  By our assumption that $X = \trop(\mathcal{X})$ is a tropical submanifold of $Y = \trop(\mathcal{Y})$, our previous case implies that $\chi( \mathcal{X}_{\sigma,t}) = \deg(\csm_0(X_{\sigma}))$.
  
Compactly supported Euler characteristics are  additive over strata, namely 
$$\chi^c(\mathcal{X}_t) =   \sum_{\sigma \in \Sigma} \chi^c( \mathcal{X}_{\sigma,t}).$$
However, since each stratum is non-singular, Poincar\'e duality relates the usual and compactly supported cohomology groups. Therefore we obtain 
$\chi(V) = (-1)^{\dim_{\mathbb{R}} (V)} \chi^c(V) = \chi^c(V),$ since the real dimensions of the complex varieties are even. 
Hence we have  
$$\chi (\mathcal{X}_t) = \sum_{\sigma \in \Sigma}   \chi( \mathcal{X}_{\sigma,t}) = \sum_{x \in X}  m_x({\csm_0(X)}) = \deg(\csm_0(X)),$$
and the proof is complete. 
 \end{proof}

 We now recall the notion of recession cycle of a tropical cycle, from \cite[Section 5]{AHR}. 
 
 \begin{defi}\label{def:reccycle}
 Let $A$ be a $k$-dimensional tropical cycle in $\R^N$, with weight function $w_A$ on its top-dimensional faces. Its {\bf recession cycle} $\rec(A)$ is a $k$-dimensional fan tropical cycle in $\R^N$ whose support is the recession fan of $A$. The weight function on the $k$-dimensional cones of $\rec(A)$ is given by
 \begin{equation}\label{recweight}
 w_{\rec(A)}(\sigma) = \sum_{\substack{ \sigma' \in A \\ \rec(\sigma')  = \sigma}} w_A(\sigma').
 \end{equation}
 \end{defi} 
 By \cite[Theorem 5.3]{AHR}, the weighted fan $\rec(A)$ is a balanced tropical cycle, and it is tropically rationally equivalent to $A$.

 \begin{rem}\label{rem:EulerChar}
 Under the same notation as in the proof of Theorem \ref{thm:Eulercharacteristic}, if $X$ is a tropical manifold transverse to the boundary of $Y$, for $\sigma $ a cone of $\Sigma$ of dimension $k$ we have
  $$\chi( \mathcal{X}_{\sigma,t}) = \deg(\csm_0(X_{\sigma})) = \sum_{\substack{\alpha \text{ face of } \hat X \\ \dim \alpha = k \\ \rec(\alpha)  = \sigma} } m_{\alpha}(\csm_k(X)).$$
 Indeed, the first equality follows directly from Theorem \ref{thm:Eulercharacteristic} applied to the subvariety $\mathcal{X}_{\sigma}$. 
 Every vertex of $X_{\sigma}$ is contained in the closure of a unique face of $X$ of sedentarity $0$ and dimension $k$ whose recession fan is $\sigma$. 
Moreover, this provides a  bijection between the vertices in $X_{\sigma}$ and such faces of $X$.  
It follows from Lemma \ref{lem:csmtransbdy} that, under this bijection, the weight in $\csm_k(X)$ of  such a $k$-dimensional face is equal to the weight in $\csm_0(X_{\sigma})$ of the corresponding vertex. This proves the above equality. 
 \end{rem}

The next theorem recovers and generalises the correspondence theorem for CSM cycles of matroids and wonderful compactifications proved in  \cite[Theorem 3.1]{LdMRS}. 
Here we consider a subvariety of a toric variety and prove that the CSM class of the subvariety intersected with the big open torus, considered in the Chow ring of the ambient toric variety, is determined by the tropical CSM class. 
Our proof relies heavily on the results and set-up previously established in \cite{Esterov} for tropicalisations of constant families.

The cohomology ring of a non-singular complete toric variety $\mathcal{Y}$ defined by a fan $\Sigma$ is isomorphic to the ring of Minkowski weights on $\Sigma$.  Let $N$ be the dimension of $\mathcal{Y}$, and let $\MW_{k}(\Sigma)$ denote the group of $k$-dimensional Minkowski weights supported on $\Sigma$. 
Then by \cite{FultonSturmfels}, we have $ \MW_{k}(\Sigma)\cong \Ac^{N-k}(\mathcal{Y})$, where $\Ac^{N-k}(\mathcal{Y})$ is the $(N-k)$-th graded piece of the Chow cohomology  ring of $\mathcal{Y}$. 
Since $\mathcal{Y}$ is non-singular and complete, by Poincar\'e duality the cap product with the fundamental class $[\mathcal{Y}]$ provides an isomorphism between Chow homology and cohomology: 
$$\frown [\mathcal{Y}] \colon \MW_k(\Sigma) \xrightarrow{\,\cong\,} \Ac_k(\mathcal{Y}).$$
 
Let   $\mathcal{V}$ be a subvariety of the torus $(\C^*)^N$ and $\mathcal{Y}$ a toric compactification of $(\C^*)^N $ such that  $\overline{\mathcal{V}}$ is transverse to all of the toric  orbits of $\mathcal{Y}$.
Esterov showed that the tropical characteristic classes defined using Euler characteristics recover the Poincar\'e duals of the CSM classes of $\mathcal{V}$ \cite[Theorem 2.39]{Esterov}.
We generalise this result to the tropicalisation of families in the next theorem.  
   
\begin{thm}
\label{thm:corrToric}
Suppose $\mathcal{X}$ is a meromorphic family of subvarieties  of $(\C^*)^N$.   Let $\mathcal{Y}$ be a non-singular projective toric variety 
with defining fan $\Sigma$ in $\R^N$ and $\overline{\mathcal{X}}$ be the closure of $\mathcal{X}$ in $\mathcal{Y}$. 
Suppose that  the closure $\overline{X} = Trop(\overline{\mathcal{X}})$ is  a tropical manifold transverse to the boundary of $Trop(\mathcal{Y})$. Then
$$\csm_k( \mathbbm{1}_{\mathcal{X}_t}) = \rec(\csm_{k}(X)) \frown [\mathcal{Y}] 
 \in \Ac_{k}(\mathcal{Y}).$$
\end{thm}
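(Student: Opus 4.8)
The plan is to prove this Chow-cohomology identity by combining the additivity of CSM classes with the toric correspondence established by Esterov, reducing the statement about families to a statement about the individual toric strata and then matching weights via the recession-cycle machinery. First I would exploit the stratification of $\mathcal{Y}$ by toric orbits. For each cone $\sigma \in \Sigma$, restricting the family to the orbit $\mathcal{Y}_\sigma \cong (\C^*)^{N - \dim \sigma}$ produces a family $\mathcal{X}_\sigma$ whose tropicalisation $X_\sigma$ is, by the transversality hypothesis, a tropical submanifold of the tropical torus $\trop(\mathcal{Y}_\sigma)$. The CSM class $\csm_k(\mathbbm 1_{\mathcal{X}_t})$ as a constructible-function invariant is additive over the locally closed strata $\mathcal{X}_t \cap \mathcal{Y}_\sigma$, so it suffices to understand each stratum and then reassemble.

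Next I would invoke Esterov's theorem \cite[Theorem 2.39]{Esterov}, which identifies the Poincar\'e dual of the CSM class of a transverse subvariety of a torus with the tropical characteristic class built from Euler characteristics of initial degenerations. The role of Theorem \ref{thm:Eulercharacteristic} here is essential: it identifies those Euler characteristics, vertex by vertex, with the multiplicities $m_v(\csm_0(X_\sigma))$ coming from beta invariants of the local matroids $M_v$. Thus the Minkowski weight that Esterov assigns to a cone $\sigma$ of $\Sigma$ is exactly $\sum_{v} \beta(M_v)$ summed over the vertices $v$ of $X_\sigma$, which by Remark \ref{rem:EulerChar} equals $\sum_{\rec(\alpha) = \sigma} m_\alpha(\csm_k(X))$ over the $k$-dimensional sedentarity-$0$ faces $\alpha$ of $\hat X$ with recession cone $\sigma$. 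This is precisely the recession weight $w_{\rec(\csm_k(X))}(\sigma)$ of Definition \ref{def:reccycle}. Hence the Minkowski weight produced by Esterov's construction coincides with $\rec(\csm_k(X))$ on every cone of $\Sigma$.

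Finally, I would translate this equality of Minkowski weights into the asserted equality in $\A_k(\mathcal{Y})$ by applying the Fulton--Sturmfels isomorphism $\MW_k(\Sigma) \cong \A^{N-k}(\mathcal{Y})$ together with the Poincar\'e-duality cap product $\frown [\mathcal{Y}]$ recalled above. The passage from the family $\mathcal X$ to the generic fibre $\mathcal X_t$, via Katz--Stapledon's nearby-fibre description, is what lets the tropical side see only the recession data while the algebraic side records $\mathbbm 1_{\mathcal X_t}$.

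I expect the main obstacle to be the bookkeeping across strata: showing that Esterov's correspondence, stated for a single transverse subvariety of a torus, assembles coherently across all the orbits $\mathcal{Y}_\sigma$ so that the resulting collection of stratum-wise Euler-characteristic weights is genuinely the single Minkowski weight $\rec(\csm_k(X))$ on $\Sigma$, rather than an incompatible collection. Verifying that the transversality of $\overline X$ to the boundary of $\trop(\mathcal Y)$ guarantees both the submanifold hypothesis needed to apply Theorem \ref{thm:Eulercharacteristic} in each stratum \emph{and} the balancing/compatibility of the recession weights into a global Minkowski weight is the delicate point; the identification of the individual weights with beta invariants is, by contrast, already packaged in Remark \ref{rem:EulerChar}.
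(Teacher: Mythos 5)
Your proposal is correct and follows essentially the same route as the paper: establish (via transversality, which the paper makes precise by citing the sch\"onness of the strata $\mathcal{X}_{\sigma,t}$ from Katz--Stapledon) that each orbit-stratum Euler characteristic equals the recession weight $w_{\rec(\csm_k(X))}(\sigma)$ via Remark \ref{rem:EulerChar} and Definition \ref{def:reccycle}, and then conclude by Esterov's Theorem 2.39 together with the Fulton--Sturmfels and Poincar\'e-duality identifications. The only point the paper makes explicit that you leave implicit is the sch\"onness of the open strata, which is what licenses the application of Esterov's result.
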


\begin{proof}
Since $X$ is a tropical manifold and is transverse to the boundary of $\trop(\mathcal{Y})$, all of the open strata  $ \mathcal{X}_{\sigma,t}$ are  sch\"on by \cite[Proposition 7.10]{KatzStapledon}. 
Remark \ref{rem:EulerChar} and  Equation \ref{recweight} imply that for $\sigma$ a $k$-dimensional cone in the recession fan of $X$,  the weight of $\sigma $ in $\rec(\csm_{k}(X))$ is equal to  the Euler characteristic of $ \mathcal{X}_{\sigma,t}$.
 Therefore, the statement follows after applying \cite[Theorem 2.39 (1)]{Esterov}. 
\end{proof}

 \section{Adjunction formula}\label{sec:adjunction}
 
In this section we give a product formula for CSM cycles of matroids, and use it to prove a tropical analogue of the adjunction formula for smooth algebraic varieties. The tropical adjunction formula will require us to restrict to codimension-1 tropical submanifolds in the sense of Definition \ref{def:submanifold}. 

\subsection{Product formula for CSM cycles of matroids}
We start with the following result about intersections of CSM cycles of matroids. If $M$ is a loopless matroid on the ground set $\{0,\dots,n\}$ and $\Sigma_M$ is its corresponding matroidal tropical cycle in $\R^{n+1} / \, \mathbb R \cdot \mathbf{1}$, we consider the formal sum of tropical cycles
$$\csm(\Sigma_M) := \csm(M) := \sum_{i=0}^n \csm_i(M).$$

\begin{prop}\label{prop:intersectionMatroid}
Let $M$ and $M'$ be two loopless matroids on the ground set $\{0,\dots,n\}$, with matroidal tropical cycles $\Sigma_M$ and $\Sigma_{M'}$ in $\R^{n+1} / \, \mathbb R \cdot \mathbf{1} \cong \R^n$. Then
$$\csm(\Sigma_M) \cdot \csm(\Sigma_{M'})= \csm(\Sigma_{M} \cdot \Sigma_{M'}),$$
where the products denote tropical (stable) intersection in $\R^n$.
\end{prop}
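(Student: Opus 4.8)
The plan is to reorganize the statement using the description of stable intersection via the diagonal, together with an exterior (Künneth-type) product formula for CSM cycles. Write $\delta\colon \R^n \hookrightarrow \R^n\times\R^n$ for the diagonal embedding and let $\delta^{*}$ denote stable intersection with its image (identified with $\R^n$), so that $A\cdot B = \delta^{*}(A\times B)$ for tropical cycles $A,B$ in $\R^n$. Since $\Sigma_M\times\Sigma_{M'} = \Sigma_{M\oplus M'}$, applying $\delta^{*}$ rewrites the two sides of the proposition as
$$\csm(\Sigma_M)\cdot\csm(\Sigma_{M'}) = \delta^{*}\big(\csm(\Sigma_M)\times\csm(\Sigma_{M'})\big), \qquad \csm(\Sigma_M\cdot\Sigma_{M'}) = \csm\big(\delta^{*}(\Sigma_{M\oplus M'})\big).$$
Here the right-hand side only makes sense once one invokes the structural fact that $\Sigma_M\cdot\Sigma_{M'}$ is again matroidal, namely the matroidal cycle of the matroid intersection $M\wedge M'$, with all multiplicities equal to $1$.

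First I would establish the exterior product formula $\csm(\Sigma_{M\oplus M'}) = \csm(\Sigma_M)\times\csm(\Sigma_{M'})$. This is a direct computation with the weight description of Lemma \ref{lem:sameweights}: a cone of $\Sigma_{M\oplus M'}$ is a product $\sigma\times\sigma'$, its star is $\starr_\sigma(\Sigma_M)\times\starr_{\sigma'}(\Sigma_{M'})$, and the associated matroid is the direct sum of the two star matroids. Since the non-reduced characteristic polynomial is multiplicative under direct sums, $\psi_{\starr_{\sigma\times\sigma'}}(\lambda) = (\lambda-1)\,\bar{\chi}_{M_\sigma}(\lambda)\,\bar{\chi}_{M'_{\sigma'}}(\lambda)$; dividing by $(\lambda-1)^{\dim(\sigma\times\sigma')-1}$ and evaluating at $\lambda=1$ factors as the product of the two CSM weights, exactly as in the proof of Lemma \ref{lem:csmtransbdy}. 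Combining this with $A\cdot B = \delta^{*}(A\times B)$ and the observation that $\delta^{*}$ lowers dimension by $n$ (so that only the single graded piece $\csm_{j+n}$ contributes to the degree-$j$ part), the proposition becomes equivalent, degree by degree, to the assertion that forming CSM cycles commutes with restriction to the diagonal:
$$\csm_j\big(\delta^{*}Z\big) = \delta^{*}\big(\csm_{j+n}(Z)\big), \qquad Z = \Sigma_{M\oplus M'}.$$

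To attack this I would argue locally. Two tropical cycles agree iff their weights agree on every maximal cone of each graded piece, and both stable intersection (through the fan displacement rule) and the CSM weights (through Lemma \ref{lem:sameweights}) are determined by the stars of the cones involved. Using $\starr_\tau(\Sigma_M\cdot\Sigma_{M'}) = \starr_{\sigma_1}(\Sigma_M)\cdot\starr_{\sigma_2}(\Sigma_{M'})$ for the cones $\sigma_1,\sigma_2$ meeting transversally along $\tau$, the desired identity reduces to a single weight computation at the apex of a stable intersection of two Bergman fans. Its base case, where $j$ is the top dimension, is precisely the multiplicity-one structure of matroid intersection recalled above.

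The main obstacle is this local weight identity. One must match the CSM weight of the intersection matroid $N_1\wedge N_2$, expressed through its reduced characteristic polynomial, against the fan-displacement sum of lattice indices times products of beta invariants extracted from $\csm(\Sigma_{N_1})$ and $\csm(\Sigma_{N_2})$. This does \emph{not} reduce to a naive multiplicativity of characteristic polynomials: already for $N_1 = N_2 = U_{2,3}$ one has $\bar{\chi}_{U_{2,3}}(\lambda) = \lambda-2$ while $\bar{\chi}_{U_{2,3}\wedge U_{2,3}}(\lambda) = \bar{\chi}_{U_{1,3}}(\lambda) = 1$, which are unrelated by any such product rule. The genuine content is therefore that the sum over transverse pairs of cones collapses to the single characteristic-polynomial expression; I expect to prove this by combining the multiplicity-one structure of matroid intersection with the $\Fs_p$/characteristic-polynomial bookkeeping of Lemma \ref{lem:invarianceFp} and Proposition \ref{prop:charOS}. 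Conceptually, this identity is the tropical incarnation of the Whitney-type multiplicativity of Chern classes under transverse intersection, which is the expected reason the formula should hold at all.
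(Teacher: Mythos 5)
Your reduction via the diagonal is legitimate as far as it goes: the exterior product formula $\csm(\Sigma_{M\oplus M'}) = \csm(\Sigma_M)\times\csm(\Sigma_{M'})$ does follow from Lemma \ref{lem:sameweights} together with multiplicativity of characteristic polynomials under direct sums, exactly as you describe, and the identity $A\cdot B = \delta^{*}(A\times B)$ correctly converts the proposition into the claim that forming CSM cycles commutes with stable intersection with the diagonal. But the argument has a genuine gap, and you name it yourself: the local weight identity equating the CSM weight of a cone of $\Sigma_M\cdot\Sigma_{M'}$ with the fan-displacement sum of lattice indices times products of weights extracted from $\csm(\Sigma_{N_1})$ and $\csm(\Sigma_{N_2})$ is never proved. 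Every step of your outline funnels into that one computation; your own $U_{2,3}$ example shows it is not a formal consequence of any multiplicativity of characteristic polynomials; and ``I expect to prove this by combining\dots'' is a statement of intent, not an argument. Since the missing identity is essentially the proposition itself restricted to stars, what you have is a correct reformulation, not a proof.

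For comparison, the paper avoids the local computation entirely. It writes each matroidal cycle as a complete intersection of tropical Cartier divisors, $[\Sigma_M] = f_1\cdots f_c\cdot[\R^n]$ and $[\Sigma_{M'}] = g_1\cdots g_{c'}\cdot[\R^n]$, so that $[\Sigma_M\cdot\Sigma_{M'}] = f_1\cdots f_c\, g_1\cdots g_{c'}\cdot[\R^n]$, and then invokes Rau's cocycle formula $\csm(\Sigma_M) = \prod_{i}\frac{1}{1+f_i}\cdot[\Sigma_M]$; the proposition then follows by a one-line formal manipulation, with all combinatorial difficulty absorbed into the two cited results. If you want to salvage your route, you would need either an independent induction on dimension for the local identity or a direct argument in the spirit of Proposition \ref{prop:charOS} relating the characteristic polynomial of the matroid intersection to the transverse-pair sum; as written, the hard step is deferred rather than done.
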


\begin{proof}
Any matroidal tropical cycle can be obtained as the intersection of divisors corresponding to codimension-many tropical rational functions \cite{ShawInt}, \cite[Corollary 3.11]{FrancoisRau}. 
In other words, if $c$ and $c'$ are the codimensions of $\Sigma_M$ and $\Sigma_{M'}$ in $\R^n$, respectively, there are tropical rational functions $f_1, \dots, f_c$ and $g_1, \dots, g_{c'}$ on $\R^n$ such that
$$[\Sigma_M] = f_1 \cdot f_2 \cdot \dots \cdot f_c \cdot [\R^n] \quad \text{and} \quad  [\Sigma_{M'}] = g_1 \cdot g_2 \cdot \dots \cdot g_{c'} \cdot [\R^n].$$
This implies that $$[\Sigma_M \cdot \Sigma_{M'}] = f_1 \cdot \dots \cdot f_c \cdot g_1 \cdot \dots \cdot g_{c'} \cdot [\R^n].$$
Using the formula for CSM cycles of matroids in \cite[Proposition 2.3]{RauHopf},
we have 
$$\csm (\Sigma_M) = \prod_{i = 1}^c \frac{1}{1+ f_i} \cdot [\Sigma_M] \quad \text{and} \quad  \csm (\Sigma_{M'}) = \prod_{i = 1}^{c'} \frac{1}{1+ g_i} \cdot [\Sigma_{M'}],$$
and thus
$$\csm (\Sigma_M) \cdot \csm(\Sigma_{M'}) =  \prod_{i = 1}^c \frac{1}{1+ f_i} \prod_{i = 1}^{c'} \frac{1}{1+ g_i} \cdot [\Sigma_M \cdot \Sigma_{M'}] = \csm (\Sigma_M \cdot \Sigma_{M'}),$$
as claimed.
\end{proof}

\subsection{Adjunction in tropical manifolds}

The following theorem relates the CSM cycles of a codimension-$1$ tropical submanifold $D$ to the CSM cycles of the ambient tropical manifold $X$ using the tropical intersection theory in $X$.  
Notice that we do not require that $X$ be compact. 
To prove the theorem we restrict to codimension one tropical  submanifolds from Definition \ref{def:submanifold}.
Recall that Example \ref{ex:singline} presented tropical manifolds (lines) which are also codimension one  tropical subvarieties of tropical manifolds  which do  not satisfy the following theorem, due to the fact that they do not satisfy the compatibility condition to be a tropical submanifold.

\begin{thm}[Adjunction formula]\label{thm:Adjunction}
Let $X$ be a tropical manifold  of dimension $d$ and $D \subset X$ a tropical submanifold of codimension $1$ in $X$. 
Then 
$$\csm_{d-2}(D) = (\csm_{d-1}(X) -  D )\cdot D,$$
where $\cdot$ denotes the tropical intersection product in $X$. 
\end{thm}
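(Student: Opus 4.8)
The plan is to reduce the statement to a local computation in a single chart and then run the same Chern-class operator calculus used in the proof of Proposition \ref{prop:intersectionMatroid}. Both sides of the claimed identity are tropical cycles defined chart-by-chart (Definitions \ref{def:csmManifold} and \ref{defi:tropicalCycles}, together with the chart-wise description of the intersection product), so it suffices to verify the equality inside each chart $\varphi_\alpha\colon U_\alpha \to \Sigma_M \times \T^{r_\alpha}$. First I would treat the sedentarity-$0$ model, where $X = \Sigma_M \subset \R^n$ is a matroidal fan of some codimension $c$ and, by the submanifold hypothesis (Definition \ref{def:submanifold}), $D$ is the matroidal fan $\Sigma_{M'}$ of a matroid quotient $M'$ of $M$ of one rank lower, carrying constant weight $1$. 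Since $D$ has codimension $1$ in $X$, it is a Cartier divisor \cite[Lemma 2.23]{ShawInt}: there is a tropical rational function $g$ with $D = \Div_X(g) = g\cdot[X]$, and for every cycle $A$ in $X$ one has $g\cdot A = D\cdot A$.

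Next I would cut out $X$ from $\R^n$ by tropical rational functions $f_1,\dots,f_c$, so that $f_1,\dots,f_c,g$ cut out $D$. Applying the operator formula \cite[Proposition 2.3]{RauHopf} for CSM cycles of matroids, exactly as in the proof of Proposition \ref{prop:intersectionMatroid}, gives
$$\csm(X) = \prod_{i=1}^{c}\frac{1}{1+f_i}\cdot[X] \qquad\text{and}\qquad \csm(D) = \frac{1}{1+g}\prod_{i=1}^{c}\frac{1}{1+f_i}\cdot[D].$$
Since $[D] = g\cdot[X]$ and all of these divisor operators commute, substituting and collecting terms yields
$$\csm(D) = \frac{g}{1+g}\cdot\csm(X) = \frac{D}{1+D}\cdot\csm(X),$$
where $\frac{D}{1+D} = \sum_{k\geq 1}(-1)^{k-1}D^{k}$ acts by iterated intersection with $D$ in $X$.

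Finally I would extract the $(d-2)$-dimensional graded piece. As $D^{k}\cdot\csm_i(X)$ has dimension $i-k$, the only contributions in dimension $d-2$ come from $(i,k)=(d-1,1)$ and $(i,k)=(d,2)$; using that the top CSM cycle is the fundamental cycle, $\csm_d(X)=[X]$ \cite{LdMRS}, this gives
$$\csm_{d-2}(D) = D\cdot\csm_{d-1}(X) - D^{2}\cdot[X] = D\cdot\csm_{d-1}(X) - D\cdot D = (\csm_{d-1}(X) - D)\cdot D,$$
which is the desired formula. To pass from the sedentarity-$0$ locus to the boundary strata I would invoke the transversality of CSM cycles to the boundary together with the compatibility of their weights across strata recorded in Lemma \ref{lem:csmtransbdy}.

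The main obstacle I expect is not the algebraic identity above but its correct set-up. First, realizing $D$ as a genuine weight-$1$ divisor $g\cdot[X]$ requires precisely the submanifold hypothesis, since $M'$ must be a matroid quotient of $M$ in compatible charts; it is exactly the failure of this condition that breaks the formula for the Vigeland lines of Example \ref{ex:singline}. Second, when $D$ is (or meets) a boundary divisor, the self-intersection $D^{2}$ is only defined after replacing $D$ by a rationally equivalent sedentarity-$0$ cycle via the tropical line bundle $\mathcal{O}_X(D)$, so the identity is to be read up to rational equivalence in that case, and one must check that the graded-piece bookkeeping is unaffected by this replacement.
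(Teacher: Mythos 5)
Your core computation coincides with the paper's: both realise $D$ locally as a Cartier divisor $g\cdot[X]$ with $X$ itself cut out by $f_1,\dots,f_c$, apply the operator formula of \cite[Proposition 2.3]{RauHopf} exactly as in the proof of Proposition \ref{prop:intersectionMatroid}, and read off the $(d-2)$-dimensional graded piece; your packaging $\csm(D)=\frac{g}{1+g}\cdot\csm(X)$ is just a tidier way of writing the same expansion. For the sedentarity-$0$ part of the statement this is a complete and correct argument, and your remarks about why the submanifold hypothesis is needed (the Vigeland lines) match the paper's discussion.

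The gap is in the boundary bookkeeping, which you flag but do not carry out, and which is more than a matter of ``checking that the graded-piece bookkeeping is unaffected.'' Two separate things are needed. First, even when $D$ is a sedentarity-$0$ submanifold, $\csm_{d-2}(D)$ contains the extra summand $\partial D$ coming from Definition \ref{def:csmInTn} (the top-dimensional CSM cycles of the sedentarity-$1$ strata of $D$), and this must be matched against the boundary contribution hidden inside $\csm_{d-1}(X)=\csm_{d-1}(X^o)+\partial X$ via the transversality identity $\partial D=\partial X\cdot D$; the paper does this reconciliation explicitly. Second, and more seriously, when $D$ \emph{is} a boundary divisor the operator calculus does not apply at all: the local defining function $g$ is identically $-\infty$ along $D$, so $\frac{g}{1+g}\cdot\csm(X)$ makes no sense there and $D^2$ must be computed through a section of the normal bundle. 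In that case the paper abandons the operator argument and verifies the formula directly, using Lemma \ref{lem:csmtransbdy} to identify $\csm_{d-1}(X^o)\cdot D$ with $\csm_{d-2}(D^o)$ and the identity $D\cdot\sum_{D_i\neq D}D_i=\partial D$ for the remaining terms. You would need to supply this separate case, not just a rational-equivalence caveat, to complete the proof.
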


\begin{proof}
We assume the submanifold $D$ is connected, otherwise the statement can be proved on each connected component. 
We denote by $D^o$ (respectively, $X^o$) the strata of $D$ (respectively, $X$) of sedentarity $0$.
We first prove the case when $D$ is not a boundary divisor of $X$, in other words, $D$ is the closure in $X$ of $D^o$.

The cycle $D$ is a Cartier divisor in $X$, meaning that there is a collection of tropical rational functions $\{f_{\alpha}\}$ such that $\varphi_\alpha(D \cap U_\alpha) = \divis_{X_{\alpha}}(f_{\alpha})$ for every chart $\varphi_\alpha$. Moreover, in the chart indexed by $\alpha$, the manifold $X$ itself is cut out by a collection of functions $g^{\alpha}_1, \dots, g^{\alpha}_{c_{\alpha}}$. Therefore, in each chart there are collections of functions  $g^{\alpha}_1, \dots, g^{\alpha}_{c_{\alpha}}$ cutting out  $X^o_{\alpha}$, and $f_{\alpha}, g^{\alpha}_1, \dots, g^{\alpha}_{c_{\alpha}}$ cutting out $D_{\alpha} := D \cap U_\alpha$.
Following the proof of Proposition \ref{prop:intersectionMatroid} and working locally in charts we have 
that
\begin{align*}
\csm(D^o_{\alpha}) &= \frac{1}{1+f_{\alpha}} \prod_{i= 1}^{c_\alpha}\frac{1}{1+g^{\alpha}_i} \cdot [D^o_{\alpha}]\\
&= (1-f_{\alpha}+f^2_{\alpha}- \dotsb ) \prod_{i= 1}^{c_\alpha}(1-g^{\alpha}_i+(g^{\alpha}_i)^2- \dotsb )  \cdot [D^o_{\alpha}].
\end{align*}
Looking only at the dimension-$(d-2)$ part of this equation we obtain, 
\begin{align*}
\csm_{d-2}(D^o_{\alpha}) &  = -(f_{\alpha} + g^{\alpha}_1+  \dots +  g^{\alpha}_{c_{\alpha}}) \cdot [D^o_{\alpha}] \\
& = -f_{\alpha} \cdot [D^o_{\alpha}] - (g^{\alpha}_1+  \dots +  g^{\alpha}_{c_{\alpha}}) \cdot [D^o_{\alpha}] \\
& = - (D_{\alpha}^o)^2  + \csm_{d-1}(X^o_{\alpha}) \cdot D^o_{\alpha}.  
\end{align*}
Therefore we can conclude that 
$$\csm_{d-2}(D^o)   = (\csm_{d-1}(X^o) -D^o)\cdot D^o.$$

Notice that, by definition of the boundary of tropical manifolds in terms of sedentarity, when $D$ is of sedentarity $0$ in $X$ we have $\partial D = \partial X \cap D$. 
Since $D$ is a submanifold of $X$ of sedentarity $0$, it is transverse to the  boundary $\partial X$ of $X$.
Moreover, $\partial X$ is a codimension-$1$ tropical cycle when equipped with weights equal to $1$,  
thus as tropical cycles we have 
$\partial D =  \partial X \cdot D$. 
We thus get
\begin{align*}
\csm_{d-2}(D) &= \csm_{d-2}(D^o) + \partial D \\
&= \csm_{d-2}(D^o) + \partial X  \cdot  D \\
&= (\csm_{d-1}(X^o) -D^o) \cdot D^o + \partial X  \cdot  D \\
&= (\csm_{d-1}(X^o) + \partial X)  \cdot  D - (D^o)^2 \\
&= \csm_{d-1}(X)  \cdot  D - D^2.
\end{align*}
The second to last equality holds since $\csm_{d-1}(X^o) \cdot D^o = \csm_{d-1}(X^o) \cdot D$ and $(D^o)^2 = D^2$ due to the transversality of $D$ with the boundary of $X$. This finishes the case when $D$ is not a  boundary divisor of $X$. 

Let $\partial X = \bigcup D_i$ be the decomposition of the boundary of $X$ into irreducible divisors. If $D$ is a boundary divisor of $X$ then
$$\csm_{d-1}(X) = \csm_{d-1}(X^o) + \partial X = \csm_{d-1}(X^o) + D + \sum_{D \neq D_i} D_i.$$   
Therefore
$$\csm_{d-1}(X) \cdot  D - D^2 = \csm_{d-1}(X^o) \cdot D + D \cdot   \sum_{D \neq D_i} D_i.$$
By Lemma \ref{lem:csmtransbdy} we have  
$\csm_{d-1}(X^o) \cdot D = \csm_{d-2}(D^o)$,  
where $D^o=D\setminus\bigcup_{D \neq D_i}D_i$. 
Moreover, the intersection 
$D\cdot \sum_{D \neq D_i} D_i =  \csm_{d-2}(\partial D) = \partial D$.
Combining all this we obtain
$$\csm_{d-1}(X) \cdot  D - D^2 = \csm_{d-2}(D^o) + \csm_{d-2}(\partial D) = \csm_{d-2}(D),$$
which proves the formula. 
\end{proof}

\section{Tropical Noether's Formula}\label{sec:Noether}

In this section we restrict to compact tropical manifolds of dimension $2$, which we call compact tropical surfaces. 
In order to prove Noether's formula for a compact tropical surface $X$,
we will assume the existence of a cellular structure $\hat{X}$ on $X$ satisfying the following properties. 

\begin{defi}\label{def:facestructure}
Let $X$ be a tropical manifold. A cellular structure $\hat{X}$ supported on $X$ is a {\bf face structure} on $X$ if 
for each cell $\sigma$ of $\hat{X}$ there exists 
a chart $\varphi: U_\alpha \to \T^{r_\alpha} \times \R^{n_\alpha}$ of $X$ such that $U_\alpha \supset \sigma$ and $\varphi(\sigma)$ is the closure of a polyhedron in $\R^{r_\alpha} \times \R^{n_\alpha}$ which is transverse to the boundary of $\T^{r_\alpha} \times \R^{n_\alpha}$. 
The face structure $\hat{X}$ is called {\bf rational} if all these polyhedra are $\RR$-rational polyhedra, i.e., their facets have normal vectors with entries in $\QQ$. 

A rational face structure $\hat X$ on a $2$-dimensional tropical manifold is called {\bf Delzant} if for any vertex $v$ of $\hat{X}$ of sedentarity 0 and any $2$-dimensional face $F$ of $\hat{X}$ with $v \in F$ we have that, in any chart of $X$ containing $v$, the primitive integer vectors $\bfw_v(e)$ and $\bfw_v(e')$ of the two edges $e, e'$ of $F$ containing $v$ can be completed to a basis of the ambient lattice. 
\end{defi}

\begin{rem}
The notion of a face structure on a tropical manifold has previously appeared in \cite{JRS}. 
We do not know exactly which tropical manifolds admit face structures, rational face structures, or Delzant face structures. 
However, given a compact tropical surface with a rational face structure $\hat{X}$,  we can use tropical wave front propagation from \cite{MikShk} on each $2$-dimensional  face of $\hat X$ to pass to a finer face structure. This face structure is not necessarily Delzant, but it has the property that for each $2$-dimensional face $F$ and each vertex $v$ of $F$, the outgoing primitive normal vectors to the two edges adjacent to $v$ in $F$ span a triangle with no interior lattice points; see  \cite{MikShk}. 
\end{rem}

We denote the topological Euler characteristic of a tropical manifold $X$ by $\chi(X)$.
Our main goal in this section is to prove the following theorem.  

\begin{thm}[Tropical Noether's Formula]
\label{thm:Noether}
Let $X$ be a compact tropical surface admitting a Delzant face structure.
Then 
$$\chi(X) = \frac{\deg (\csm_0(X) + \csm_1(X)^2) }{12}.$$
\end{thm}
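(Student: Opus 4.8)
The plan is to prove the tropical Noether's Formula by reducing the Euler characteristic computation to a local count over the cells of the Delzant face structure, and then matching this against the local contributions to $\deg(\csm_0(X) + \csm_1(X)^2)$. First I would establish the topological side: since $X$ is compact and admits a cellular (face) structure $\hat X$, the Euler characteristic is the alternating sum $\chi(X) = \#\{\text{vertices}\} - \#\{\text{edges}\} + \#\{\text{faces}\}$ over the cells of $\hat X$. Because the face structure is Delzant and each $2$-cell is locally a polytope whose vertices look like the corner of a smooth toric surface, I would organize the count vertex-by-vertex, distributing each cell's contribution to its incident vertices so that $\chi(X)$ becomes a sum of \emph{local} curvature-like terms indexed by the vertices of $\hat X$, separated according to their order of sedentarity $s(v) \in \{0,1,2\}$.

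The heart of the argument is to compute the right-hand side $\deg(\csm_0(X) + \csm_1(X)^2)$ as a matching sum of local terms. By Definition \ref{def:csmManifold} the cycle $\csm_0(X)$ is supported on vertices, with the weight at each sedentarity-$0$ vertex $v$ given, via Lemma \ref{lem:sameweights} and Proposition \ref{prop:charOS}, by $(-1)^2\beta(M_v)$ evaluated through $\psi_{\starr_v(\f)}$ at $\lambda = 1$; for a smooth tropical surface each such $M_v$ is of rank $3$, so these weights are explicit small integers. For the $\csm_1(X)^2$ term I would use the intersection theory recalled in the preliminaries: $\csm_1(X)$ is a $1$-cycle supported on the edges of $\hat X$ with weights computed from $\psi_{\starr_\sigma(\f)}$ divided by $(\lambda-1)^{0}$ at $\lambda=1$, and its self-intersection is computed chart-locally as a sum of contributions concentrated at the vertices, using that $\csm_1(X)$ is a Cartier divisor and applying $\Div$ vertex-by-vertex. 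The Delzant hypothesis is exactly what guarantees that in each chart around a sedentarity-$0$ vertex the two adjacent edge directions extend to a lattice basis, which makes each local self-intersection number computable and rational with denominator dividing $12$.

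The key technical step is then to verify the identity $12\,\chi_v = \deg_v(\csm_0(X)) + (\csm_1(X)^2)_v$ at each vertex $v$, where $\chi_v$ is the local Euler contribution and the right side collects the local weight of $\csm_0$ at $v$ and the local self-intersection number of $\csm_1$ at $v$. I expect the main obstacle to be the careful bookkeeping at boundary vertices, those of sedentarity $1$ and $2$: here the CSM cycles pick up contributions from the closure of sedentarity-$0$ faces (by Definition \ref{def:csmInTn} and Lemma \ref{lem:csmtransbdy}), the self-intersection of $\csm_1(X)$ may involve boundary divisors and hence require the tropical line bundle machinery from \cite{JRS} to replace boundary components by rationally equivalent sedentarity-$0$ cycles, and the local toric model is that of a corner of a compact toric surface rather than of $\R^2$. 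I would handle the sedentarity-$0$ vertices first, where the computation should reduce to the classical smooth-toric local Noether identity governing the $12$ in the formula, and then treat the boundary strata by reducing to the interior case via the adjunction formula of Theorem \ref{thm:Adjunction} applied to each boundary divisor $D_i$, using $\csm_0(D_i)$, $\csm_1(D_i)$, and the additivity of $\chi$ over the strata $X_0$, $\{s=1\}$, $\{s=2\}$.
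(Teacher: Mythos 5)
Your overall strategy --- comparing $\chi(X)$ computed from the cell structure against $\deg(\csm_0(X)+\csm_1(X)^2)$ computed locally, with the $12$ ultimately supplied by the classical Noether formula for smooth toric surfaces --- is the right one, and your first two steps (the cellular Euler characteristic and the vertex-supported expression for the CSM degrees) match what the paper does in Lemma \ref{lem:localcontribution}. But the key technical step you propose, the vertex-local identity $12\,\chi_v = \deg_v(\csm_0(X)) + (\csm_1(X)^2)_v$, is not true and cannot be made true by any distribution of the cells' Euler contributions to their incident vertices. The multiplicity of $\csm_0(X)+\csm_1(X)^2-\sum_i D_i^2$ at a sedentarity-$0$ vertex is $12 - 6|\Edges(v,\hat X)| + 3|\Faces(v,\hat X)| - \sum_{e}\sigma_v(e)$, and the correction terms $\sigma_v(e)$ (coming from the balancing condition at $v$ along $e$) are genuinely non-local: they only disappear after being paired with the $\sigma$ at the \emph{other} endpoint of each edge and re-expressed through the self-intersection numbers $\tau_F(e)$ of the toric divisors in the fans attached to the $2$-faces $F$ (Lemma \ref{lem:ToricInt}). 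The classical identity $12 = 3|\Faces(\Sigma_F)| + \sum_\rho D_\rho^2$ is then applied once per $2$-\emph{face}, not per vertex (Lemma \ref{NoetherToric}); the localization that closes the argument lives on faces and edges, and no purely vertex-local version of it exists. (Your remark about local intersection numbers being ``rational with denominator dividing $12$'' is also off: all intersection numbers here are integers, and the $12$ enters only through the per-face toric Noether formula.)

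A second gap is the treatment of the boundary divisors. The terms $D_i^2$ for boundary divisors are only defined after choosing a section of the normal bundle $N_X(D_i)$, and the proof needs these choices to be compatible with the unimodularity of the face fans of the unbounded $2$-cells. The paper resolves this by constructing \emph{Delzant} sections (Definition \ref{def:admissible}, Lemma \ref{lem:admissiblesection}), whose zero divisors lie on bounded edges with multiplicity at most one, and by feeding the tangent directions of these sections into the face fans $\Sigma_F$ of the faces meeting the boundary. Your plan to instead invoke the adjunction formula of Theorem \ref{thm:Adjunction} on each $D_i$ does not supply this: adjunction relates $\csm_0(D_i)$ to $(\csm_1(X)-D_i)\cdot D_i$, but it neither computes $D_i^2$ nor produces the unimodular completions of the edge fans of the unbounded faces that the per-face Noether identity requires.
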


In order to prove Noether's Formula, we first recall formulas for the intersection numbers of $1$-cycles in a compact tropical surface. In a  compact tropical surface, there are  $1$-cycles of sedentarity $0$ and boundary $1$-cycles. A boundary $1$-cycle  is a linear combination of boundary divisors of $X$; see Subsection \ref{sec:boundary}.

\begin{prop}\cite[Section 3.5]{Shaw:Surf}\label{prop:intersections}
Let $X$ be a compact tropical surface and $A, B$ two $1$-cycles in $X$. In the following three cases, the intersection of $A$ and $B$ is defined on the cycle level, as described below. 
\begin{enumerate}
\item If  $A, B$ are both of sedentarity $0$ and transverse to the boundary of $X$, then the intersection $A \cdot B$ is supported on points of sedentarity $0$. Namely, we can write $A\cdot B =  \sum_{x \in (A \cap B)_{0}} m_x(A \cdot B) \, x $. 
\item If $A$ is of sedentarity $0$ and transverse to the boundary of $X$, and $B$
 is an irreducible boundary divisor then $A\cdot B = \sum_{ x\in A \cap B} m_x(A \cdot B) \,x$, where the multiplicity $m_x$ is equal to the weight $w_A(e)$ of the the unique edge $e$ of $A$ containing $x$ in its boundary. 
\item If $A$ and $B$ are distinct irreducible boundary divisors of $X$ then
$A \cdot B = \sum_{x \in A \cap B} x$. 
\end{enumerate}
\end{prop}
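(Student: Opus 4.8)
The plan is to establish each of the three cases by reducing to a local computation in a chart $\varphi_\alpha\colon U_\alpha \to X_\alpha = \T^{r_\alpha}\times Y_\alpha$ and then gluing, using the chart-wise definition $A\cdot B = \Div_A(f)$ of the intersection product recalled above together with the fact (from \cite[Lemma 2.23]{ShawInt}) that every codimension-$1$ cycle is a Cartier divisor. Because $X$ is a tropical surface we have $r_\alpha + \dim Y_\alpha = 2$ in every chart, so the only local models that occur are $\R^2$ away from the boundary, $\T\times\R$ near an interior point of a boundary divisor, and $\T^2$ near a corner. The local $0$-cycles will glue to a global cycle since the intersection product is invariant under the extended integer affine transition maps. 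The substance of the proposition is twofold: that in these three configurations the product is defined on the cycle level (and not merely up to rational equivalence), and that its multiplicities are the ones stated.

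For case (1) I would represent $B$ by a Cartier divisor $\{f_\alpha\}$ with $\varphi_\alpha(B\cap U_\alpha)=\Div_{X_\alpha}(f_\alpha)$. Since $B$ has sedentarity $0$ and is transverse to $\partial X$, the $f_\alpha$ take only finite values on $A$, so $\Div_A(\{f_\alpha\})$ is computed chart by chart as an ordinary tropical divisor of a rational function restricted to $A$, with no line-bundle correction needed. Transversality of both $A$ and $B$ forces $A\cap B$ to avoid $\partial X$, so the resulting $0$-cycle is supported at sedentarity-$0$ points and its multiplicities are the stable intersection numbers computed in the local model $\R^2$. This yields $A\cdot B = \sum_{x\in (A\cap B)_0} m_x(A\cdot B)\,x$.

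For case (2) I would work near a point $x\in A\cap B$ in a chart with $X_\alpha=\T\times\R$ in which the boundary divisor is $B=\{-\infty\}\times\R=\Div_{X_\alpha}(x_1)$. The key geometric observation is that, for the closure of a sedentarity-$0$ edge to reach an interior point of $B$ (rather than a corner), its primitive direction must be exactly $(-1,0)$; this is precisely what transversality to the boundary (Definition \ref{def:transverse}) guarantees, and it shows that each $x\in A\cap B$ is the endpoint of a unique edge $e$ of $A$ meeting $B$ primitively. Evaluating $\Div_A(x_1)$ at $x$ then returns the weight of that edge, giving $m_x = w_A(e)$. For case (3), near a corner the local model is forced to be $\T^2$, because $r_\alpha=2$ and hence the matroidal part $Y_\alpha$ is $0$-dimensional; the two distinct boundary divisors appear as the coordinate strata $\{-\infty\}\times\T$ and $\T\times\{-\infty\}$, which meet transversally at $(-\infty,-\infty)$ with multiplicity $1$ since $X$ is smooth and the ambient lattice is standard. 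Summing over corners gives $A\cdot B=\sum_{x\in A\cap B} x$.

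The step I expect to require the most care is verifying that cases (2) and (3) are well defined on the cycle level. In general, intersecting with a cycle containing boundary divisors is only defined up to rational equivalence, via the tropical line bundle construction of \cite{JRS} and \cite[Proposition 4.6]{JRS}, because the local defining functions may equal $\pm\infty$ on $A$. The point to check is that the transversality hypothesis in (2) and the distinctness hypothesis in (3) make the local divisor $\Div_A(x_1)$ (respectively, the corner intersection) canonical and independent of the chosen local defining function, so that the naive chart-wise computation already produces a well-defined global $0$-cycle with no need to perturb $B$ within its class. Confirming that this computation is insensitive to the $\pm\infty$ values taken at the boundary, and that it is compatible across overlapping charts, is where the bookkeeping with the stratified structure of $\T\times\R$ and $\T^2$ is concentrated.
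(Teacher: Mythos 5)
First, a point of comparison: the paper does not prove this proposition at all --- it is imported verbatim from \cite[Section 3.5]{Shaw:Surf} --- so there is no in-paper argument to measure yours against. Your overall strategy (reduce to the three local models $\R^2$, $\T\times\R$, $\T^2$, represent one factor as a Cartier divisor, compute $\Div_A(f)$ chart by chart, and separately verify that the boundary cases are canonical rather than merely defined up to rational equivalence) is the natural one and is consistent with the intersection-theoretic setup recalled in Section 2 of the paper. Your handling of cases (2) and (3) is essentially correct, and your closing paragraph correctly isolates where the real work lies.

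There is, however, one concrete gap, in case (1). You claim that transversality of $A$ and $B$ to the boundary forces $A\cap B$ to avoid $\partial X$. This is false: Definition \ref{def:transverse} is a condition on each cycle separately, and only guarantees that $A\cap D_i$ and $B\cap D_i$ are finite sets of sedentarity-$1$ points; nothing prevents $A$ and $B$ from limiting to the \emph{same} boundary point. In fact they often do: in the local model $\T\times\R$ a sedentarity-$0$ edge can only reach the stratum $\{-\infty\}\times\R$ in the primitive direction $(-1,0)$, so if $A$ and $B$ both contain the boundary point $(-\infty,c)$ then near that point each is the closure of the horizontal segment $\{(t,c)\}$ and they overlap as sets. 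The content of part (1) --- that $A\cdot B$ is supported on sedentarity-$0$ points, with the sum taken over $(A\cap B)_0$ rather than over all of $A\cap B$ --- is precisely about this configuration, and your argument assumes it away. The correct justification is again a local computation: a function $f$ with $\Div_{X_\alpha}(f)=B$ has its locus of non-linearity along $B$, and since $A$ coincides with $B$ along the horizontal segment approaching the shared boundary point, $f|_A$ is affine there, so $\Div_A(f)$ carries multiplicity zero at that point. With this replacement (and with the direction claim in case (2) attributed to the topology of the closure in $\T\times\R$ rather than to transversality, which is a cosmetic fix), your proof goes through.
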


We now fix some notation for the rest of this section.
Suppose $X$ is a tropical manifold of dimension $d$. 
For $i = 0,1,\dots, d$, let $X_i$ be the subset of $X$ consisting of points of sedentarity $i$. We let $\hat{X}_i$ denote the subset of cells of $\hat{X}$ whose relative interiors are of sedentarity $i$. 
A cell $\sigma$ of $\hat{X}_0$ is called {\bf bounded} if its closure $\bar{\sigma}$ is contained in $X_0$, and otherwise it is called {\bf unbounded}.

For any polyhedral complex $Y$, we will denote its set of vertices, edges, and $2$-dimensional faces by $\Vertices(Y)$, $\Edges(Y)$, and $\Faces(Y)$, respectively.
Moreover, if $v$ a vertex of $Y$, we denote by $\Edges(v,Y)$ the set of edges of $Y$ containing $v$, and by $\Faces(v,Y)$ the set of $2$-dimensional faces of $Y$ containing $v$.
Similarly, if $e$ is an edge of $Y$, we denote by $\Faces(e,Y)$ the set of $2$-dimensional faces of $Y$ containing $e$.

Let $X$ be a compact tropical surface admitting a Delzant face structure $\hat X$.
Suppose $e$ is an edge of $\hat X$ of sedentarity $0$ and $v$ is a vertex of $e$ of sedentarity $0$.
Fix a chart of $X$ containing the edge $e$.
Since $X$ satisfies the balancing condition with weights equal to $1$ around the edge $e$, there exists an integer $\sigma_v(e)$ such that
\begin{equation}\label{eqn:sigma} 
- \sigma_v(e) \, \bfw_{v}(e) = \sum_{\substack{e' \in {\Edges}(v, \hat X) \\ e' \neq e \text{ are in some face } F \in {\Faces}(v, \hat X)}} \bfw_{v}(e'),
\end{equation}
where $\bfw_{v}(e)$ denotes the primitive integer vector in the direction of $e$ pointing outwards from the vertex $v$. 
Note that the integer $\sigma_v(e)$ is independent of the chart chosen for $X$.

Our strategy for proving Noether's Formula begins by analysing the tropical $0$-cycle 
$\csm_0(X) + \csm_1(X)^2 - \sum_{D \in \partial X} D^2$.
We use the intersection multiplicities in Proposition \ref{prop:intersections} 
to obtain the following presentation of it, supported only at the vertices of $\hat X$.

\begin{lemma}\label{lem:localcontribution}
Let $X$ be a compact tropical surface with a Delzant face structure $\hat{X}$. Let $D_1, \dots, D_k$ denote the irreducible boundary divisors of $X$. Then for any vertex $v \in \hat{X}$ we have 
$$\csm_0(X) + \csm_1(X)^2 - \sum_{i = 1}^k D_i^2 = \sum_{v \in \Vertices(\hat X)} m_v \cdot v$$
with
$$
m_v = 12 - 6|{\Edges}(v, \hat{X})| + 3|{\Faces}(v, \hat{X})| - \delta(v) \cdot \sum_{e \in {\Edges}(v, \hat{X})} \sigma_v(e),
$$
where $\delta(v) = 1$ if $v$ is of sedentarity $0$, and $\delta(v)= 0$ otherwise.
\end{lemma}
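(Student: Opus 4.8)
The plan is to show that the $0$-cycle $\csm_0(X) + \csm_1(X)^2 - \sum_i D_i^2$ is supported on $\Vertices(\hat X)$ and to compute its local multiplicity $m_v$ one vertex at a time, organised by the order of sedentarity of $v$. The key preliminary reduction is the decomposition $\csm_1(X) = \csm_1(X^o) + \sum_i D_i$, in which each boundary divisor $D_i$ occurs with weight $1$; this follows from Lemma~\ref{lem:csmtransbdy} exactly as in the proof of Theorem~\ref{thm:Adjunction}, where $X^o$ denotes the sedentarity-$0$ stratum. Squaring and subtracting the boundary self-intersections gives
\begin{equation*}
\csm_1(X)^2 - \sum_i D_i^2 = \csm_1(X^o)^2 + 2\,\csm_1(X^o)\cdot\Bigl(\sum_i D_i\Bigr) + 2\sum_{i<j}D_iD_j,
\end{equation*}
so the role of the $-\sum_i D_i^2$ term is precisely to delete the boundary self-intersections and leave only cross terms. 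I would also record the two weights needed from earlier results: by Lemma~\ref{lem:sameweights} the weight of $\csm_1(X^o)$ on a sedentarity-$0$ edge $e$ is $\bar\chi_{M_e}(1) = 2 - |\Faces(e,\hat X)|$, where $\starr_e$ is a rank-$2$ matroid fan with $|\Faces(e,\hat X)|$ rays; and by Proposition~\ref{prop:charOS} the weight of $\csm_0(X)$ at a sedentarity-$0$ vertex $v$ is $\bar\chi_{M_v}(1) = \beta(M_v)$.

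I would treat the boundary cases first, as they isolate where the $\sigma_v(e)$ term can appear. Since $\csm_1(X^o)$ is a sedentarity-$0$ cycle transverse to $\partial X$, its self-intersection is supported on sedentarity-$0$ points by Proposition~\ref{prop:intersections}(1), while $\csm_1(X^o)\cdot D_i$ and $D_iD_j$ are supported on $\partial X$. Hence at a sedentarity-$0$ vertex only $\csm_0(X)$ and $\csm_1(X^o)^2$ survive and the boundary terms contribute nothing. At a sedentarity-$1$ vertex the local model is $\Sigma_N\times\T$ with $N$ of rank $2$; writing $c$ for its number of rays one has $|\Edges(v,\hat X)| = c+1$ and $|\Faces(v,\hat X)| = c$, and combining Lemma~\ref{lem:csmtransbdy} (which gives $w_{\csm_0}(v) = 2-c$) with the multiplicity rule of Proposition~\ref{prop:intersections}(2) applied to the single interior edge (of weight $2-c$) yields $m_v = (2-c) + 2(2-c) = 6 - 3c = 12 - 6|\Edges(v,\hat X)| + 3|\Faces(v,\hat X)|$. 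At a sedentarity-$2$ vertex the model is $\T^2$, so $|\Edges(v,\hat X)| = 2$ and $|\Faces(v,\hat X)| = 1$; here $\csm_1(X^o)$ vanishes near $v$, Lemma~\ref{lem:csmtransbdy} gives $w_{\csm_0}(v)=1$, and Proposition~\ref{prop:intersections}(3) contributes $1$ from the unique pair of boundary divisors meeting at $v$, giving $m_v = 1 + 2\cdot 1 = 3 = 12 - 12 + 3$. In both boundary cases $\delta(v)=0$ and no $\sigma$-term appears, as required.

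The substantive case is the sedentarity-$0$ one, where I must establish
\begin{equation*}
m_v = \beta(M_v) + \bigl(\csm_1(M_v)^2\bigr)_v = 12 - 6|\Edges(v,\hat X)| + 3|\Faces(v,\hat X)| - \sum_{e\in\Edges(v,\hat X)}\sigma_v(e),
\end{equation*}
where $\csm_1(M_v)^2$ is computed in the local matroidal surface $\Sigma_{M_v}$ and localised at the cone point (the chart identifying $\csm_1(X^o)$ near $v$ with $\csm_1(M_v)$). I would compute it by realising $\csm_1(M_v)$ as the corner locus of a tropical Cartier divisor $\phi$ on $\Sigma_{M_v}$ and evaluating $\Div_{\csm_1(M_v)}(\phi)$ at the origin. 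The Delzant hypothesis ensures each $2$-cone of $\starr_v(\hat X)$ is unimodular, making the outgoing slopes of $\phi$ along the edges of $\csm_1(M_v)$ explicitly computable, and the balancing relation~\eqref{eqn:sigma} is exactly what converts these slopes into the integers $\sigma_v(e)$. Writing $\beta(M_v) = \bar\chi_{M_v}(1)$ through the dimensions $\dim\Fs_p(\B(M_v))$ of Proposition~\ref{prop:charOS} and adding the two contributions, the terms separately counting rank-$1$ and rank-$2$ flats must cancel, leaving only $|\Edges(v,\hat X)|$, $|\Faces(v,\hat X)|$ and $\sum_e\sigma_v(e)$.

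The hard part is this self-intersection computation. Unlike for a planar fan, an edge of a matroid fan may lie in more than two $2$-cones, so the corner-locus description of $\phi$ and the bookkeeping of its slopes must be carried out with care, and one must verify that the flat-counting data entering $\beta(M_v)$ cancels exactly against the matroid-dependent part of $(\csm_1(M_v)^2)_v$. I would fix the universal constants $12,-6,3$ by testing the identity on small models---for instance the locally smooth vertex modelled on $\B(U_{3,3})$, where every weight $2 - |\Faces(e,\hat X)|$ vanishes, $(\csm_1(M_v)^2)_v = \beta(M_v) = 0$, and $\sum_e\sigma_v(e) = -6$, and the generic plane vertex modelled on $\B(U_{3,4})$, where $\beta = 1$ and $(\csm_1(M_v)^2)_v = 1$---and then establish the formula in general via the Cartier-divisor computation above.
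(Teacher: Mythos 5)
Your reduction and your treatment of the two boundary cases coincide with the paper's proof: the same decomposition $\csm_1(X)^2 - \sum_i D_i^2 = \csm_1(X_0)^2 + 2\,\csm_1(X_0)\csm_1(X_1) + 2\sum_{i<j}D_iD_j$ is used there, and the sedentarity-$1$ and sedentarity-$2$ multiplicities are computed exactly as you do, via Lemma \ref{lem:csmtransbdy} and Proposition \ref{prop:intersections}.

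The gap is in the sedentarity-$0$ case, which is the substance of the lemma and which you describe as a plan rather than carry out. Two specific inputs are missing. First, the local self-intersection multiplicity: the paper does not recompute it but imports the formula $m_v(\csm_1(X_0)^2) = 10 + N_v - 5|\Edges(v,\hat X)| + 2|\Faces(v,\hat X)| - \sum_e \sigma_v(e)$ from \cite[Proposition 3.18]{Shaw:Surf}, where $N_v$ is the dimension of the affine span of $\starr_{\hat X}(v)$. Your proposal to derive this from a Cartier-divisor corner-locus computation is plausible in outline, but ``fixing the universal constants by testing on small models'' is not a proof: to argue that way you would first have to show that the multiplicity is an affine function of $|\Edges(v,\hat X)|$, $|\Faces(v,\hat X)|$ and $\sum_e\sigma_v(e)$ with coefficients independent of the matroid $M_v$, and that is precisely the nontrivial content (an edge of a matroidal fan can lie in arbitrarily many $2$-cones, so nothing reduces to the planar case). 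Second, even granting the self-intersection formula, the cancellation you assert requires the identity $\dim\Fs_2(\starr_{\hat X}(v)) = 1 - |\Edges(v,\hat X)| + |\Faces(v,\hat X)|$, so that $m_v(\csm_0(X)) = 1 - N_v + \dim\Fs_2 = 2 - N_v - |\Edges(v,\hat X)| + |\Faces(v,\hat X)|$ and the $N_v$'s cancel against the self-intersection term. This identity is not formal: the paper proves it by tropical Poincar\'e duality for the matroidal fan $\starr_{\hat X}(v)$ (via \cite[Theorem 5.3]{JRS}), identifying $\dim\Fs_2$ with $\dim H^{BM}_2(\Sigma;\Fs_0)$ and computing the latter as an Euler characteristic of the Borel--Moore chain complex. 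Without establishing both of these points your argument does not close.
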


\begin{proof}
The cycle $\csm_0(X)$ is supported on the vertices of $\hat{X}$. 
The cycle $\csm_1(X)$ can be written as 
$\csm_1(X) = \csm_1(X_0) + \csm_1(X_1) = \csm_1(X_0) + \sum_{i=1}^k D_i.$ 
Therefore, we have 
\begin{equation}\label{eqn:csm1sq}
\csm_1(X)^2 -  \sum_{i=0}^k D_i^2 = \csm_1(X_0)^2 + 2 \csm_1(X_0) \csm_1(X_1)  + 2\sum_{i < j} D_i D_j. 
\end{equation}

First suppose $v$ is a vertex in $\hat{X}_2$. 
In this case, the multiplicity of $v$ in $\csm_0(X)$ is equal to 1. 
Among the terms on the right-hand side of Equation \eqref{eqn:csm1sq}, the vertex  $v$ only appears with non-zero multiplicity in $2\sum_{i < j} D_i D_j$. Since $v$ is the intersection of two unique boundary divisors $D_i$ and $D_j$ of $X$, we thus have 
$m_v = 3$. 
Moreover, $|{\Edges}(v, \hat{X})| = 2$  and $|{\Faces}(v, \hat{X})|  = 1$. Therefore, in this case we have 
$m_v = 3 = 12 - 6|{\Edges}(v, \hat{X})| + 3|{\Faces}(v, \hat{X})|$,
and the claim holds.

Suppose now $v$ is a vertex in $\hat{X}_1$. The multiplicity of $v$ in $\csm_0(X)$
is equal to $2 - |{\Faces}(v, \hat{X})|$.
The only term on the right-hand side of 
Equation \eqref{eqn:csm1sq} with non-zero multiplicity in $v$ is $2 \csm_1(X_0) \csm_1(X_1)$. Moreover, the cycle  $ \csm_1(X_1)$ is of weight 1 everywhere, and the weight of $ \csm_1(X_0)$ along the unique edge $e$ of $\hat{X}_0$ whose boundary is $v$  is
$2 - |{\Faces}(v, \hat{X})|$ by Lemma \ref{lem:csmtransbdy}.  
Therefore, in this case $m_v = 3(2 - |{\Faces}(v, \hat{X})|)$.
Since each vertex $v \in \hat{X}_1$ is adjacent to exactly one edge in $X_0$ and satisfies $|{\Edges}(v,\hat{X})| = |{\Faces}(v,\hat{X})| + 1$, we can rewrite this as 
$m_v = 3(2 - |{\Faces}(v, \hat{X})|) = 12 -6|{\Edges}(v,\hat{X})| + 3|{\Faces}(v,\hat{X})|$,
and the claim holds.

Lastly, suppose $v \in \hat{X}_0$. The only term on the right-hand side of Equation \eqref{eqn:csm1sq} with non-zero multiplicity in $v$ is
$\csm_1(X_0)^2$. 
Using \cite[Proposition 3.18]{Shaw:Surf}, the term $\csm_1(X_0)^2$ can be expressed as a cycle supported only on the vertices of $\hat X_0$, where a vertex $v \in \hat X_0$ has multiplicity
\begin{equation*}
m_v(\csm_1(X_0)^2) = 10 + N_v - 5|{\Edges}(v, \hat{X})| + 2|{\Faces}(v, \hat{X}) | + \sum_{ e \in {\Edges}(v, \hat{X})} -\sigma_v(e)
\end{equation*}
with $N_v$ denoting the dimension of the affine span of the local fan $\starr_{\hat X}(v)$.

For a vertex $v$ of $\hat{X}_0$, denote by $\Fs_p(v)$ the vector space $\Fs_p( \starr_{\hat X}(v))$ from Definition \ref{def:Fp}.  
By Lemma \ref{lem:sameweights}, the multiplicity of a vertex $v \in \hat X_0$ in the term $\csm_0(X_0)$ is 
$$m_v(\csm_0(X_0)) = \dim \Fs_0(v) - \dim \Fs_1(v) + \dim \Fs_2(v).$$
We have $\dim \Fs_0(v) = 1$ and $\dim \Fs_1(v) = N_v$.
The fan $\Sigma := \starr_{\hat X}(v)$ is matroidal, thus by \cite[Theorem 5.3]{JRS},  it satisfies Poincar\'e duality for tropical homology with $\mathbb{Z}$ and hence $\mathbb{Q}$ coefficients. We refer the reader to \cite[Section 5]{JRS} and Equations 5.1 and 5.3 there for the definitions of the chain complexes computing these groups.  Tropical Poincar\'e duality implies that  
 $H^q(\Sigma; \Fs^p) \cong H^{BM}_{2 - q}(\Sigma; \Fs_{2 - p})$
for all $0 \leq p, q \leq 2$, where the left-hand side denotes tropical cohomology and the right-hand side tropical Borel-Moore homology.  
From their definitions, we have $H^q(\Sigma; \Fs^p) = 0$ for $q \neq 0$ and $\dim H^0(\Sigma; \Fs^p) = \dim \Fs_p(v)$. 
Since  $H^{BM}_{2 - q}(\Sigma; \Fs_{0}) = 0 $ for $q \neq 0$, we can compute its dimension using Euler characteristics.  Notice also that $H^{BM}_{2 }(\Sigma; \Fs_{0}) =H^{BM}_{2 }(\Sigma; \R)$ and that $\dim C^{BM}_q(\Sigma, \R)$ is equal to the number of $q$-dimensional faces of the fan $\Sigma$. 
It follows that 
\begin{align*}
\dim \Fs_2(v) &= \dim H^{BM}_2(\Sigma; \Fs_0) \\
 & = \dim C^{BM}_0(\Sigma, \R)  - \dim C^{BM}_1(\Sigma, \R) + \dim C^{BM}_2(\Sigma, \R) \\
  & = 1 - |\Edges(v, \hat{X})| +  |\Faces(v, \hat{X})|.
\end{align*}
We conclude that the multiplicity of the vertex $v \in \hat X_0$ in the cycle $\csm_0(X_0)$ is equal to  
$$m_v(\csm_0(X))  = 2 - N_v - |{\Edges}(v,\hat{X})| + |{\Faces}(v,\hat{X})|.$$ 
Combining this with the contribution of $m_v(\csm_1(X_0)^2)$ from above, we obtain
$$m_v = 12 - 6|{\Edges}(v, \hat{X})| + 3|{\Faces}(v, \hat{X})| - 
\sum_{e \in {\Edges}(v, \hat{X})} \sigma_v(e),$$
and the claim is proved for all vertices of $\hat{X}$. 
\end{proof}

Following  Lemma \ref{lem:localcontribution}, it remains to take the self-intersection of the boundary divisors of $X$ into account. 
If $D$ is an irreducible  boundary divisor of a  tropical surface $X$, then the self-intersection $D^2$ is only defined up to  rational, homological, or numerical equivalence. 
We are only interested in the self-intersection number, so any of these equivalences will do upon taking the degree. We abuse notation and let  $D^2$ denote the degree of the self-intersection. This degree can be computed in a variety of ways. Here we will do this by finding a section of the normal bundle of $D$ in $X$ and computing its degree \cite{Shaw:Surf}. The section we will use will be {\bf Delzant} in the sense of Definition \ref{def:admissible}. This will not only help us to compute the degree of the normal bundle, but it will be used to construct smooth toric surfaces. 

We briefly review the theory of line bundles, sections, and divisors on tropical curves. 
A tropical curve  $C$ is a tropical manifold of dimension $1$. Here we will assume that $C$ is compact. 
Each boundary divisor $D$ of a compact tropical surface $X$ is a compact tropical curve; an atlas for $D$ is obtained by restricting the atlas for $X$. Moreover, any face structure $\hat X$ on $X$ induces a face structure on $D$.

A line bundle on a tropical curve $C$ is a $2$-dimensional tropical surface $L$ together with a map $\pi : L \to C$ such that for any point $p \in C$ we have $\pi^{-1}(p) = \T$ and furthermore, there exist local trivialisations: for any point $p \in C$ there exists a neighbourhood $U_p$ of $p$ such that  $\pi^{-1}(U_{p}) = U_{p} \times \T$.  If $\{U_{\alpha}, \phi_{\alpha}\}$ is a covering such that $\pi^{-1}(U_{\alpha}) = U_{\alpha} \times \T$ then the line bundle $L$ can be specified via integer affine transition functions $f_{\alpha \beta} : U_{\alpha} \cap U_{\beta} \to \R$ which obey the cocycle condition. 
Let $\mathcal{O}^*$ denote the sheaf of tropical invertible regular functions on $C$. Then line bundles are in correspondence with $H^1(C ; \mathcal{O}^*)$ \cite{MikZha:Jac}. 
The normal bundle $N_X(D)$ of a boundary divisor $D$ in a tropical surface $X$ is the tropical line bundle on $D$ where the transition functions $f_{\alpha \beta} : U_{\alpha} \cap U_{\beta} \to \R$ are inherited from the transition functions from the charts of $X$ in a neighbourhood of $D$ \cite[Section 3.5.4]{Shaw:Surf}.

A section of a line bundle is a continuous function $s : C \to L$ such that $\pi(s(p)) = p$ for all $p \in C$ and in every trivialisation $U \subset C$ with $\pi^{-1}(U) = U \times \T$ the function $s|_U$ is a tropical rational function $U \to \T$ which is bounded. In particular, in a neighbourhood of  each point $p$ of $C$, the section $s$ is given by a piecewise integer affine  function. 
For each $p \in C$ of sedentarity $0$,  we can consider the collection of outgoing primitive integer tangent vectors of $C$ at $p$, call these $\bar{{\bf w}}_p(e_1), \dots, \bar{{\bf w}}_p(e_{{\rm val}(p)})$, where the $e_i$ denote the edges of $C$ adjacent to $p$ and  ${\rm val}(p)$ denotes the valency of $p$ in the underlying graph of $C$. 
By the balancing condition for $C$ at $p$ we have  that $\sum_{i = 1}^{{\rm val}(p)} \bar{{\bf w}}_p(e_i) = 0$. 
Let $\frac{\partial s}{\partial  \bar{{\bf w}}_p(e_i)}(p)$ denote the slope of the integer affine function at $p$ in the direction of $ \bar{{\bf w}}_p(e_i)$. The order of vanishing $m_p(s)$ of $s$ along $p$ is the order of vanishing of the tropical rational function determining $s$ in a local trivialisation at $p$; more specifically, $$m_p(s) = \sum_{i = 1}^{{\rm val}(p)}\frac{\partial s}{\partial \bar{{\bf w}}_p(e_i)} (p).$$
The {\bf  divisor associated to the section} $s$ is $(s) := \sum_{p \in C} m_p(s) p$. 

At a point $p$ of $C$ we will also consider the primitive outgoing tangent vectors to the graph of the section $s$ at the point $s(p)$; namely, the vectors ${{\bf t}}_p(e_1), \dots, {{\bf t}}_p(e_{\rm val(p)})$
given by 
\begin{equation}\label{eq:tangentvectors}
{{\bf t}}_p(e_i) = \left( \bar{{\bf w}}_p(e_i),  \frac{\partial s}{\partial  \bar{{\bf w}}_p(e_i)}(p)\right).
\end{equation}

There is an equivalence between divisors on a tropical curve $C$ and tropical line bundles on $C$ together with a section (up to adding scalars) \cite[Proposition 4.6]{MikZha:Jac}.
Two sections  of the same line bundle that differ only by adding a constant yield the same divisor.

A global tropical rational function on a curve $C$ is a function $h : C \to \T$ such that in each chart of $C$ the function $h$ can be expressed as the difference of two tropical polynomials. We can think of a tropical rational function $h$ as a section of the trivial line bundle $h :C \to \T$.  A divisor is {\bf principal} if it arises from a global tropical rational function $h: C \to \T$ considered as such a section. Rationally equivalent divisors differ by a principal divisor and thus have the same degree. 

If $D$ is a divisor in a compact tropical surface $X$ and $s_D$ is a section of its normal bundle, then by \cite[Section 3.5.4]{Shaw:Surf} we have 
\begin{equation}\label{eq:boundaryselfintersection}
D^2 = \deg((s_D)) = \sum_{p \in D} m_p(s_D).
\end{equation}
To obtain the degree of the self-intersection of a boundary divisor $D$, any section $s_D$ of the normal bundle will do. However, for our proof of Noether's Formula, we will require the notion of a Delzant section. 

\begin{defi}\label{def:admissible}
Let $C$ be a tropical curve with  a face structure $\hat{C}$, and let $L$ be a tropical line bundle on $C$. A section $s : C \to L $ is  {\bf Delzant} if its associated divisor $Q = (s)$ is  supported on the bounded edges of $\hat C$, and for every bounded edge $e$ of $\hat{C}$ there is at most one point $p \in {\rm relint}(e) $ such that $p \in {\rm Supp}(Q)$ and if this point exists we have  $m_p(Q) = 1$. 
\end{defi}

We call such sections Delzant as for every bounded edge $e$ of $\hat C$ and every point $p \in {\rm relint}(e)$, the outgoing tangent vectors to the graph of $s$ are either parallel (in the case $m_p(Q) = 0$) or they are unimodular (in the case $m_p(Q) = 1$). 
 In particular, if $s$ is a Delzant  section, any points with negative multiplicity in the associated divisor $Q = (s)$ must be vertices of $\hat{C}$.

\begin{lemma}\label{lem:admissiblesection} 
For any tropical curve $C$ with face structure $\hat{C}$ and any line bundle $L$ on $C$ with local trivialisations over the edges of $\hat{C}$ there exists a Delzant section $s : C \to L$. 
\end{lemma}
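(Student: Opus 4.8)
The plan is to construct a Delzant section explicitly, working in the given trivialisations over the edges of $\hat C$. Because $C$ is one-dimensional, a section is determined by the following decoupled data: a choice of fibre value $s(v)\in\pi^{-1}(v)$ at each sedentarity-$0$ vertex $v$, together with, for each edge, a bounded piecewise-integer-affine function on that edge (in its own trivialisation) interpolating the prescribed endpoint values. Continuity and compatibility with the transition functions at a vertex hold automatically once a single fibre value has been fixed there, so the only real freedom, and the only place the divisor can acquire interior points, is the choice of function along each edge. Since the break multiplicities $m_p(s)$ are intrinsic (changing an edge chart by an integer-affine function alters the two outgoing slopes at an interior point by $+c$ and $-c$, leaving their sum fixed), the Delzant condition may be verified edge by edge.

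First I would dispose of the unbounded edges, i.e.\ those meeting $\partial C$. On such an edge the Delzant condition forbids any interior divisor point, so $s$ must be affine there; and boundedness of $s$ as one approaches the boundary end forces its slope in the edge trivialisation to be $0$, since a nonconstant integer-affine function on a ray tending to the boundary is unbounded. Hence $s$ is \emph{constant} along each unbounded edge, equal to the value at its finite endpoint. In particular unbounded edges impose no constraint on the vertex values, contribute no interior divisor, and contribute $0$ to the multiplicity at their sedentarity-$0$ endpoint.

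The heart of the argument is the corresponding realisation on a bounded edge, which I would prove as a short local lemma. Identify a bounded edge $e$ with $[0,\ell_e]$ via its trivialisation and prescribe real endpoint values with difference $\Delta_e$. Setting $a:=\lfloor \Delta_e/\ell_e\rfloor$, take $s$ to have slope $a$ on $[0,t]$ and slope $a+1$ on $[t,\ell_e]$, where $t:=(a+1)\ell_e-\Delta_e$ (and take the single slope $\Delta_e/\ell_e$ with no break when this is an integer). One checks $t\in(0,\ell_e)$, that the boundary values are met, and that the outgoing slopes at $t$ sum to $(a+1)+(-a)=1$, so that $m_t(s)=1$. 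The crucial point is that a \emph{single} convex unit break already realises an \emph{arbitrary} real slope difference $\Delta_e$; this is exactly what makes the Delzant normalisation attainable, and it is what dissolves any potential global obstruction coming from the cycles of $C$, since every bounded edge can independently accommodate whatever endpoint values it is handed.

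With these two ingredients the section is assembled directly: choose an arbitrary fibre value at each sedentarity-$0$ vertex, set $s$ constant along every unbounded edge, and apply the local lemma along every bounded edge. The result is continuous, bounded, and compatible with the transition functions, hence a genuine section of $L$. By construction its divisor has at most one point in the interior of each bounded edge, of multiplicity $1$; it has no interior points on unbounded edges; and any vertex of nonzero multiplicity is an endpoint of a bounded edge, because a vertex incident only to unbounded edges has $s$ locally constant and hence multiplicity $0$. Therefore ${\rm Supp}((s))$ is contained in the closed bounded edges and $s$ is Delzant. The only subtle point to get right is the forced behaviour along the unbounded edges together with the bookkeeping ensuring that vertex contributions always lie on bounded edges; the substantive content is the single-unit-break realisation of the previous paragraph, after which no global consistency condition remains to be checked.
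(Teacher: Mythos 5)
Your construction is correct, and the decisive local ingredient is identical to the paper's: on a bounded edge, an arbitrary endpoint difference $\Delta_e$ is realised by a convex function with slopes $\lfloor \Delta_e/\ell_e\rfloor$ and $\lfloor \Delta_e/\ell_e\rfloor+1$ and a single break of multiplicity $1$ --- the paper's tropical polynomial $f_e(x)=\max\{m(x-v_1)+\tilde s(v_1),\,(m+1)(x-v_2)+\tilde s(v_2)\}$ is exactly your interpolant. Where you genuinely diverge is in the global set-up. The paper does not build the section from scratch: it starts from an arbitrary section $\tilde s$ of $L$, whose existence it imports from Mikhalkin--Zharkov, and corrects it edge by edge by adding tropical rational functions $h_e$ supported on single edges; since each $h_e$ is principal, the corrected object is automatically again a section of $L$, and no gluing statement is ever needed. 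You instead prescribe a fibre value at each sedentarity-$0$ vertex and interpolate independently along edges, which reproves existence in passing and is more self-contained, but it transfers the burden onto your opening claim that vertex values plus edge interpolants always assemble into a section of $L$ --- true because the base is one-dimensional and the edges overlap only at vertices, but this is precisely the step the paper's route avoids having to justify. The one assertion I would flag is that a sedentarity-$0$ vertex incident only to unbounded edges automatically has multiplicity $0$: your section is constant on each such edge \emph{in that edge's own trivialisation}, whereas the multiplicity at the vertex is computed in a single trivialisation covering all adjacent edge germs, so it also picks up the slopes of the transition functions and need not vanish. The paper's proof has the same blind spot (it likewise leaves the multiplicity at the finite endpoint of an unbounded edge uncontrolled), so this does not put you behind it, but "no global consistency condition remains" is slightly too optimistic at exactly this point.
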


\begin{proof}
By \cite[Proposition 4.6]{MikZha:Jac}, there exists a section $\tilde{s}: C \to L$ of $L$. Denote its associated divisor by $\tilde{Q}$. 
Subtracting any principal divisor  $P$ from $\tilde{Q}$ provides a linearly equivalent divisor and thus another section of $L$. 

We now construct for every edge $e$ of $\hat C$ a certain tropical rational function $h_e : C \to \T$ which is constant outside of $e$. 
For $e$ an unbounded edge of $C$, let $v$ denote the unique vertex of $\hat{C}$ which is of sedentarity $0$. Then we define  $h_e(p) = - \tilde{s}(p) $  if $p $ is a point of $e$, and $h_e(p) = -\tilde{s}(v) $ otherwise. Then $h_e : C \to \T$ is a tropical rational function,  and its associated divisor $(h_e)$ satisfies $m_p((h_e)) = - m_p(\tilde{Q})$ if $p \in {\rm relint}(e)$ or if $p$ is the vertex of $e$ of sedentarity 1, and $m_p((h_e)) = 0$ for all $p \notin e$. 

For $e$ a bounded edge of $\hat{C}$, let $v_1$ and $v_2$ denote its two vertices. 
 If $\tilde{s}(v_1) = \tilde{s}(v_2)$, we   proceed as above and define $h_e(p) = - \tilde{s}(p) $  if $p $ is a point of $e$ and $h_e(p) = -\tilde{s}(v_i)$ otherwise. 
Then $h_e: C \to \T$ is a tropical rational function satisfying $m_p((h_e)) = - m_p(\tilde{Q})$ for all $p \in {\rm relint}(e)$, and $m_p((h_e)) = 0$ for all $p \notin e$. 

Suppose that   $\tilde{s}(v_1) \neq  \tilde{s}(v_2)$. 
Upon taking a chart we can suppose that $v_1, v_2 \in \R$ and that $v_1 < v_2$. Let $$m = \Bigl \lfloor \frac{\tilde{s}(v_2) -\tilde{s}(v_1)}{v_2-v_1} \Bigr\rfloor \quad \text{and} \quad m+1 = -\Bigl\lceil \frac{\tilde{s}(v_2) -\tilde{s}(v_1)}{v_2-v_1} \Bigr\rceil.$$
Let $f_e : \R \to \R$ be the  tropical polynomial given by the piecewise integer affine  function 
$$f_e(x) = \max \{ m(x- v_1) + \tilde{s}(v_1), (m+1)(x-v_2) + \tilde{s}(v_2)\}. $$ 
It can be checked that there is a single zero $p_0$ of $f_e$ in $\R$ and it satisfies $v_1 < p_0 <v_2$. Moreover, the tropical polynomial $f_e$ satisfies $f_e(v_i) = \tilde{s}(v_i)$. Then we can cook up a tropical rational function 
$h_e : C \to L$ by setting $h_e(p) = - \tilde{s}(p) + f_e(p)$ for $p \in e$ and $h_e(p) = 0$ otherwise. Notice that $h_e$ is a tropical rational function and that $m_p((h_e)) = - m_p(\tilde{Q})$ for all $p  \in {\rm relint}(e)$ except $p = p_0$, and $m_p((h_e)) = 0$ for all $p \notin e$.  

By construction, the divisor of the  section  $s = \tilde{s} + \sum_{e \in \Edges(C)} h_e$ is Delzant  in the sense of Definition \ref{def:admissible}, and the lemma is proven. 
\end{proof}

Returning to our proof of Noether's Formula, for each irreducible boundary divisor $D$ of $X$ we fix a Delzant section $s_D$.
Using the Delzant face structure $\hat X$ on $X$ and the choice of Delzant sections $s_D$, 
we will associate to each face $F$ of $\hat X$ a $2$-dimensional rational complete unimodular fan $\Sigma_F$, called the {\rm face fan} of $F$.
  
First, suppose $P$ is a $2$-dimensional $\RR$-rational convex polyhedron in $\R^n$ equipped with an orientation. Let $T(P) \cong \R^2$ denote its tangent space. We define the {\bf edge fan}  $\Sigma_P$ of $P$ to be the  rational polyhedral fan in $T(P)$ whose rays are generated by the (oriented) primitive integer directions $\bfw_P(e)$ of the edges $e$ of $P$ and whose $2$-dimensional cones correspond to the vertices of $P$. See Figure \ref{fig:boundedSigmaF}. 
Notice that choosing another orientation of $P$ simply produces the negative fan to ${\Sigma}_P$. The edge fan ${\Sigma}_P$ is complete if and only if $P$ is compact, and ${\Sigma}_P$ is unimodular if and only if $P$ is Delzant. 

\begin{figure}
\includegraphics[scale=0.7]{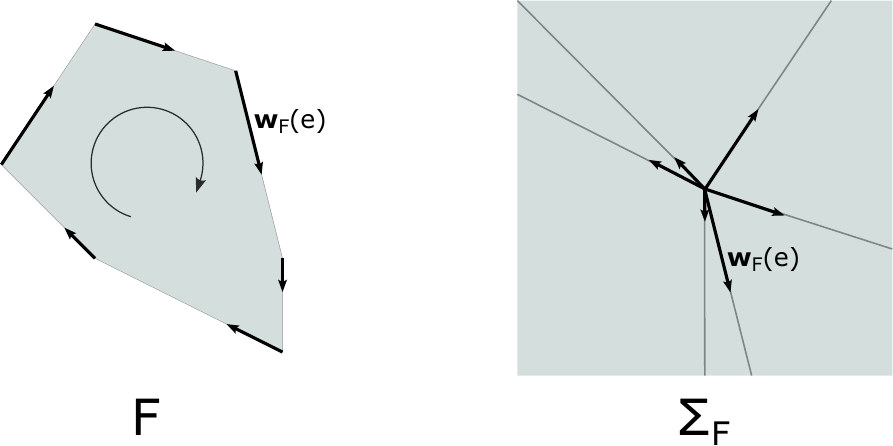} 
\caption{An bounded face $F$ of $\hat{X}$ and its corresponding fan $\Sigma_F$.}
\label{fig:boundedSigmaF}
\end{figure}

\begin{defi}\label{def:facefan}
Let $X$ be a compact tropical surface with a Delzant face structure $\hat{X}$, 
and fix a Delzant  section $s_D$ for each boundary divisor $D$ of $X$; see Lemma \ref{lem:admissiblesection}. 
For every $2$-dimensional face $F$ of $\hat{X}$, we define the {\bf face fan} $\Sigma_F$ of $F$,
which is a $2$-dimensional rational complete unimodular fan. As in Definition \ref{def:facestructure}, choose a chart $\varphi_\alpha$ under which $F$ is the closure of a $2$-dimensional polyhedron $P_F$ in $\R^{n_\alpha} \times \R^{r_\alpha}$ transverse to the boundary of $\T^{n_\alpha} \times \R^{r_\alpha}$, and fix an orientation of $P_F$.

\begin{itemize}
\item If $F$ contains only points of sedentarity 0, then the face fan $\Sigma_F$ is equal to the edge fan $\Sigma_{P_F}$. See Figure \ref{fig:boundedSigmaF}.

\item If $F$ contains points of sedentarity 1 and no points of sedentarity 2,  
the face $F$ contains a unique edge $e$ of sedentarity $1$. The edge $e$ is in some boundary divisor $D$, and has endpoints $v_1, v_2 \in \Vertices(\hat X_1)$.  Suppose the orientation of $F$ induces an orientation from $v_1 $ to $v_2$ on $e$. 
Fix a local trivialisation of the normal bundle to $D$ over the edge $e$.
Recall that ${\bf t}_{v_i}(e)$ denotes the primitive outward tangent vector to the graph of $s_D$ over $e$ at $v_i$.
We complete the edge fan $\Sigma_{P_F}$ to the face fan $\Sigma_F$ by adding the rays generated by the 
tangent vectors ${\bf t}_{v_1}(e)$ and $- {\bf t}_{v_2}(e)$, 
and adding the missing $2$-dimensional cones spanned by adjacent rays. 
If ${\bf t}_{v_1}(e) = -{\bf t}_{v_2}(e)$, i.e., if the section
$s_D$ has no zero on $e$, then only one ray is added. 
The fact that $s_D$ is a Delzant section implies that the fan $\Sigma_F$  is unimodular.
See Figure~\ref{fig:sed1SigmaF}.
\begin{figure}
\includegraphics[scale=0.7]{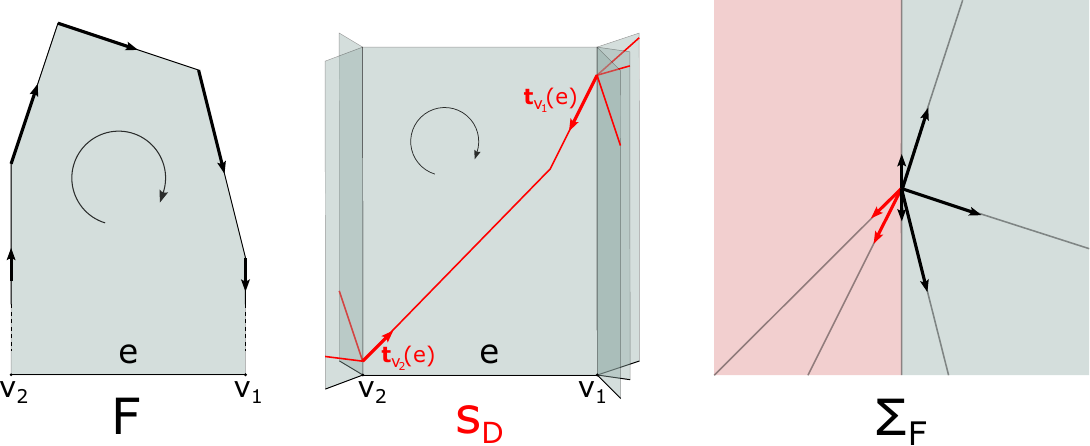} 
\caption{An unbounded face $F$ of $\hat{X}$ with no points of sedentarity 2, the corresponding part of the Delzant section $s_D$ of the boundary divisor $D$, and the resulting fan $\Sigma_F$ associated to $F$. }
\label{fig:sed1SigmaF}
\end{figure}

\item If $F$ contains a point of sedentarity 2, the support of the edge fan ${\Sigma}_{P_F}$ is 
equal to the cone spanned by the (oriented) primitive integer directions of the two unbounded rays of $P_F$.
We construct the face fan $\Sigma_F$ from $\Sigma_{P_F}$ by adding the rays spanned by the negative of these two integer directions, and adding the three $2$-dimensional cones spanned by adjacent rays.
Again, $\Sigma_F$ is a unimodular fan.
See Figure \ref{fig:sed2SigmaF}.
\begin{figure}[b]
\includegraphics[scale=0.7]{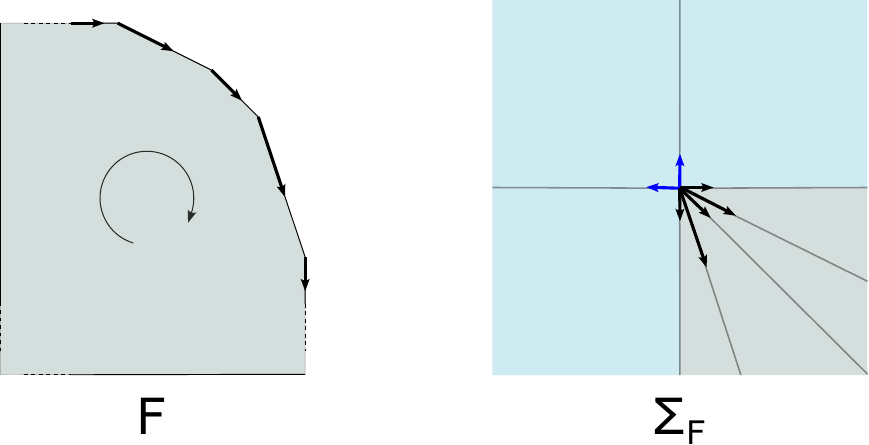} 
\caption{An unbounded face $F$ of $\hat{X}$ with a point of sedentarity 2 and its corresponding fan $\Sigma_F$.}
\label{fig:sed2SigmaF}
\end{figure}

\end{itemize}
Notice that $\Sigma_F$ is defined up to a choice of orientation and chart for $F$. However,  different choices yield a face fan which is ${\rm GL}_2(\Z)$-equivalent. 
\end{defi}

We now relate the geometry of the face fans $\Sigma_F$ to the integers $\sigma_v(e)$ defined in Equation \eqref{eqn:sigma}. Suppose $e$ is an edge of $\hat{X}$ of sedentarity $0$ and $F$ a $2$-dimensional face of $\hat X$ containing $e$. 
Let $\rho_e$ be the ray in the face fan $\Sigma_F$ generated by the (oriented) primitive integer vector $\bfw_F(e)$ of $e$. 
If $\rho$ and $\rho'$ are the two rays in $\Sigma_F$ adjacent to $\rho_e$, denote by ${\bf w}_{\rho'}$ and ${\bf w}_{\rho''}$ their primitive integer vectors. 
Define the integer $\tau_F(e)$ by 
 \begin{equation} \label{eqn:tau} 
 - \tau_F(e) {\bf w}_F(e) = {\bf w}_{\rho'} + {\bf w}_{\rho'}.
\end{equation}
Such an equation always holds for a unique integer $ \tau_F(e)$ since the cones adjacent to the ray $\rho_e$ in $\Sigma_F$ are unimodular \cite[Section 2.5]{Fulton}.

\begin{lemma}\label{lem:ToricInt}
Let $X$ be a compact tropical surface with a Delzant face structure $\hat{X}$. 
If $e$ is a bounded edge of $\hat{X}$ with vertices $v_1, v_2 \in \Vertices(\hat X_0)$, we have 
$$\sigma_{v_1}(e) + \sigma_{v_2}(e)  = - \sum_{\substack{F \in {\Faces}(\hat{X}) \\ F \supset e}} \tau_F(e).$$
If $e$ is an unbounded edge of $\hat{X}$ with vertex $v$ of sedentarity $0$ and vertex $\bar{v}$ 
of sedentarity $1$, we have  
$$\sigma_{v}(e) - m_{\bar{v}}(s_D)  = - \sum_{\substack{F \in {\Faces}(\hat{X}) \\ F \supset e}} \tau_F(e),$$ 
where $D$ is the boundary divisor of $X$ containing $\bar v$ and $s_D$ is the corresponding Delzant section used to construct the face fans in Definition \ref{def:facefan}. 
\end{lemma}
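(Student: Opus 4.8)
The plan is to sum the defining relation \eqref{eqn:tau} of $\tau_F(e)$ over all $2$-dimensional faces $F$ of $\hat X$ containing $e$, and then to read off each side of the resulting identity. Since negating the orientation of $F$ simultaneously negates $\bfw_F(e)$ and both adjacent primitive vectors, the integer $\tau_F(e)$ does not depend on the orientation chosen in Definition \ref{def:facefan}; I may therefore orient each $F \supset e$ so that the oriented primitive vector $\bfw_F(e)$ equals one fixed vector $\bfu$. With this normalisation, summing \eqref{eqn:tau} over all $F\supset e$ gives
$$ -\Big(\sum_{F \supset e}\tau_F(e)\Big)\,\bfu \;=\; \sum_{F \supset e}\big(\bfw_{\rho'_F}+\bfw_{\rho''_F}\big), $$
where $\rho'_F,\rho''_F$ are the two rays of $\Sigma_F$ adjacent to $\rho_e$. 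Each such neighbour is the ray of an edge of $P_F$ meeting $e$ at one of its two endpoints, so the right-hand side splits as $S_v + S_{\mathrm{far}}$ according to these two ends of $e$.

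For a sedentarity-$0$ endpoint, tracing the orientation around $\partial P_F$ shows that, as $F$ ranges over the faces containing $e$, the neighbour of $\rho_e$ produced at that vertex runs exactly over the vectors $\bfw_{v}(e')$ (up to the fixed orientation sign) for the edges $e'$ adjacent to $v$ sharing a face with $e$ --- precisely the index set of \eqref{eqn:sigma}. Hence the balancing relation defining $\sigma_v(e)$ forces this partial sum to be $\pm\sigma_v(e)\,\bfu$. In the bounded case both endpoints $v_1,v_2$ are of sedentarity $0$, so applying this at each end yields $S_{v_1}+S_{v_2}=(\sigma_{v_1}(e)+\sigma_{v_2}(e))\bfu$, and comparing $\bfu$-coefficients gives the first formula.

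For the sedentarity-$1$ endpoint $\bar v$ in the unbounded case, I would first use that a neighbourhood of $\bar v$ in $X$ is isomorphic to $\T \times \starr_{\bar v}(D)$, so that the faces $F \supset e$ are in bijection with the edges $\bar e$ of $D$ incident to $\bar v$, each such $F$ having $\bar e$ as its sedentarity-$1$ edge. By the second bullet of Definition \ref{def:facefan}, the neighbour of $\rho_e$ on this side is then the added ray $\mathbf{t}_{\bar v}(\bar e)=\big(\bar{\bfw}_{\bar v}(\bar e),\,\frac{\partial s_D}{\partial \bar{\bfw}_{\bar v}(\bar e)}(\bar v)\big)$, the tangent vector to the graph of the Delzant section $s_D$. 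Summing these over all $\bar e\ni \bar v$, the first coordinates cancel by the balancing of the weight-$1$ curve $D$ at $\bar v$, while the second coordinates sum to the order of vanishing $m_{\bar v}(s_D)$ by its very definition; thus $S_{\mathrm{far}}=-m_{\bar v}(s_D)\,\bfu$ and, together with $S_v=\sigma_v(e)\bfu$, comparing $\bfu$-coefficients gives $\sigma_v(e)-m_{\bar v}(s_D)=-\sum_{F\supset e}\tau_F(e)$. Conceptually, the role played by $\sigma_{v_2}(e)$ at a sedentarity-$0$ endpoint is played by $-m_{\bar v}(s_D)$ at a sedentarity-$1$ endpoint.

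The main obstacle is the combination of sign and orientation bookkeeping with the reconciliation of the three types of faces in Definition \ref{def:facefan}. In particular, a face $F\supset e$ may contain a point of sedentarity $2$ (precisely when the incident edge $\bar e$ of $D$ runs to a corner $D\cap D'$), in which case $\Sigma_F$ is built by the third bullet rather than from the section: here one checks directly from \eqref{eqn:tau} that $\tau_F(e)=0$, and the delicate point is to verify that the contributions of such faces to $S_v$ and to $S_{\mathrm{far}}$ remain consistent with the balancing relation \eqref{eqn:sigma} at $v$ and with the telescoping into $m_{\bar v}(s_D)$ at $\bar v$. Correctly identifying which of the two adjacent rays of $\Sigma_F$ is the genuine $\bar v$-side neighbour of $\rho_e$ in each of the three cases is where the care is needed.
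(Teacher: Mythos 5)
Your proposal follows essentially the same route as the paper's proof: sum the defining relation \eqref{eqn:tau} of $\tau_F(e)$ over the faces $F \supset e$, split the right-hand side according to the two endpoints of $e$, identify the contribution at each sedentarity-$0$ endpoint with $\sigma_v(e)\,\bfw_v(e)$ via the balancing relation \eqref{eqn:sigma}, and identify the contribution at the sedentarity-$1$ endpoint with $-m_{\bar v}(s_D)\,\bfw_v(e)$ using the balancing of the weight-one curve $D$ at $\bar v$ together with the definition of the order of vanishing of the section. Your closing remark that $-m_{\bar v}(s_D)$ plays the role of $\sigma_{v_2}(e)$ at the far end is exactly the paper's point of view.

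The one place where you go beyond the paper is your explicit discussion of faces $F\supset e$ containing a sedentarity-$2$ point, and there your concrete claim is false: it is not true that $\tau_F(e)=0$ for such faces. For instance, if $P_F\subset\R^2$ has vertices $(0,0)$ and $(1,-1)$, unbounded rays in directions $(-1,0)$ and $(0,-1)$, and $e$ is the ray from $(0,0)$, then the neighbours of $\rho_e$ in $\Sigma_F$ are the ray of the bounded edge and one added ray, and \eqref{eqn:tau} gives $\tau_F(e)=1$. You are nevertheless right that this is where the delicacy sits: for such a face the $\bar v$-side neighbour of $\rho_e$ is the added ray $-\bfw_F(r')$, with $r'$ the other unbounded ray of $P_F$, and this agrees with ${\bf t}_{\bar v}(\bar e)$ only up to a multiple of $\bfw_v(e)$ given by the slope of $s_D$ in the trivialisation of $N_X(D)$ at the corner. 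Hence the per-face identity $-\tau_F(e)\bfw_v(e)=-\bfw_v(e')+{\bf t}_{\bar v}(\bar e)$ fails face-by-face in this case, and one must check that the error terms cancel after summing over all $F\supset e$ (they do, because the slopes of $s_D$ measured in the corner trivialisations and in a fixed trivialisation at $\bar v$ differ by the transition functions of the normal bundle). This verification, which you defer, is the genuinely missing step; it is worth noting that the paper's own proof also treats every $F\supset e$ as if $\Sigma_F$ were built by the second bullet of Definition \ref{def:facefan} and so elides the same point.
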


\begin{proof} 
We begin with the case of a bounded edge $e$ with vertices $v_1, v_2 \in \Vertices(\hat X_0)$.
For any (possibly unbounded) $2$-dimensional face $F$ of $\hat X$ adjacent to $e$,
denote by $e_F^1$ and $e_F^2$ the edges of $F$ adjacent to $e$ at the vertices $v_1$ and $v_2$, respectively. 
Without loss of generality, suppose the orientation of $F$ 
induces the orientation on $e$ from $v_1$ to $v_2$.
The rays of $\Sigma_F$ corresponding to $e_F^1$ and $e_F^2$ are then 
$-\bfw_{v_1}(e_F^1)$ and $\bfw_{v_2}(e_F^2)$, respectively, as the orientation induces the inward pointing direction at $v_1$ and the outward pointing direction at $v_2$. See Figure \ref{fig:boundedSigmaF}. Then by Equation \eqref{eqn:tau} we have 
$$- \tau_F(e)\bfw_{v_1}(e)  = -\bfw_{v_1}(e_F^1) + \bfw_{v_2}(e_F^2).$$
Summing over all faces $F$ containing $e$ we get
$$
 - \sum_{\substack{F \in {\Faces}(\hat{X}) \\ F \supset e}} \tau_F(e) \bfw_{v_1}(e)  = 
\sum_{\substack{ e' \in {\Edges}(v_1, \hat{X}) \\ e'\neq e \text{ are in a common face}}} -\bfw_{v_1}(e') + \sum_{\substack{ e'' \in {\Edges}(v_2, \hat{X}) \\e''\neq e \text{ are in a common face}}}  \bfw_{v_2}(e'').
$$
Applying the definition of $\sigma_{v_i}(e)$ from Equation \eqref{eqn:sigma} we obtain 
$$- \sum_{\substack{F \in {\Faces}(\hat{X}) \\ F \supset e}} \tau_F(e) \bfw_{v_1}(e)  = \sigma_{v_1}(e) \bfw_{v_1}(e)  - \sigma_{v_2}(e) \bfw_{v_2}(e).$$ 
Since $\bfw_{v_1}(e)$ is the outward pointing vector of $e$ from $v_1$ and $\bfw_{v_2}(e)$ is the outward pointing vector of $e$ from $v_2$, we have $\bfw_{v_1}(e)  = - \bfw_{v_2}(e)$, 
and so
$$  - \sum_{\substack{F \in {\Faces}(\hat{X}) \\ F \supset e}} \tau_F(e) \bfw_{v_1}(e)  = [\sigma_{v_1}(e) + \sigma_{v_2}(e)] \, \bfw_{v_1}(e),$$
which implies the desired result.

Suppose now $e$ is an unbounded edge of $\hat X$ with vertex $v$ of sedentarity $0$ and vertex $\bar{v}$ of sedentarity $1$.
Consider the Delzant section $s_D$ corresponding to the divisor $D$ containing $\bar v$,
and the local trivialisations of the normal bundle to $D$ used to construct the fans $\Sigma_F$. 
For each edge $\bar e$ of $\hat{D}$ containing $\bar{v}$, recall that ${\bf t}_{\bar{v}}(\bar e)$ denotes the primitive tangent vector to the graph of $s_D$ at $\bar v$ in the local trivialisation over $\bar e$, and that
$$- m_{\bar{v}}(s_D){\bf w}_{v}(e)=  \sum_{\bar{e} \in \Edges(\bar{v}, \hat{D})} {\bf t}_{\bar{v}}(\bar{e})$$
Using this together with Equation \eqref{eqn:sigma}, we can write
$$[ \sigma_v(e) - m_{\bar{v}}(s_D)]\, {\bf w}_{v}(e) = -  \sum_{\substack{e' \in {\Edges}(v, X) \\ e' \neq e \text{ are in some face } F \in {\Faces}(v, X)}} \bfw_{v}(e') + \sum_{\bar{e} \in \Edges(\bar{v}, \hat{D})} {\bf t}_{\bar{v}}(\bar{e}).$$
For any $2$-dimensional face $F$ of $\hat X$ adjacent to $e$ there is a unique edge $e'$ of $F$ of sedentarity $0$ adjacent to $e$ at $v$, and a unique edge $\bar{e}$ of $F$ of sedentarity $1$ adjacent to $e$ at $\bar v$.
We can assume that $F$ was oriented so that the induced orientation on $e$ is from $v$ to $\bar{v}$. 
Then the vector ${\bf t}_{\bar{v}}(\bar{e})$ is exactly the one used to construct the face fan $\Sigma_F$, whereas due to our choice of orientation we have ${\bf w}_F(e') = -\bfw_{v}(e')$. 
By Equation \eqref{eqn:tau} we have 
$$- \tau_F(e) {\bf w}_v(e)  = -{\bf w}_v(e')  + {\bf t}_{\bar{v}}(\bar{e}).$$ 
Adding this over all faces $F$ containing $e$ and comparing with the previous equation proves the claim. 
\end{proof}

The next lemma provides an expression for the number of $2$-dimensional faces of $\hat X$
arising from the Noether's formulas satisfied by the toric surfaces defined by the face fans $\Sigma_F$.

\begin{lemma}\label{NoetherToric}
Let $X$ be a compact tropical surface with a Delzant face structure $\hat{X}$.
Let $D_1, \dots, D_k$ denote the irreducible boundary divisors of $X$.
Then 
$$12|{\Faces}(\hat{X})| = \sum_{v \in {\Vertices}(\hat{X})}  3|{\Faces}(v, \hat{X})| - \sum_{v \in {\Vertices}(\hat{X}_0)}\sum_{e \in {\Edges}(v, \hat{X})} \sigma_v(e) + \sum_{i=1}^k D_i^2.$$
\end{lemma}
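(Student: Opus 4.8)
The plan is to apply the classical Noether formula to the smooth complete toric surfaces defined by the face fans $\Sigma_F$ and to sum the resulting identities over all $2$-dimensional faces $F$ of $\hat X$. For a smooth complete toric surface with fan $\Sigma_F$, the relation $12\chi(\mathcal{O}) = K^2 + c_2$ is equivalent to the purely combinatorial identity $\sum_{\rho}\tau_F(\rho) = 12 - 3\,|\Sigma_F(1)|$, where the sum runs over the rays $\rho$ of $\Sigma_F$, the integer $\tau_F(\rho)$ is the self-intersection of the corresponding toric boundary divisor, and $|\Sigma_F(1)| = |\Sigma_F(2)|$ is the common number of rays and maximal cones. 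For a ray generated by the primitive direction of an edge $e$ of $F$ this $\tau_F(\rho)$ is exactly the integer $\tau_F(e)$ of Equation \eqref{eqn:tau}. Rewriting the identity as $12 = \sum_\rho\tau_F(\rho) + 3|\Sigma_F(2)|$ and summing over $F$ gives
\[
12\,|\Faces(\hat X)| = \sum_{F}\sum_{\rho\in\Sigma_F(1)}\tau_F(\rho) \;+\; 3\sum_F |\Sigma_F(2)|,
\]
so it remains to evaluate the two sums on the right.

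For the cone count I would show that the maximal cones of $\Sigma_F$ are in bijection with the vertices of $F$ together with the zeros of the chosen Delzant sections lying in the relative interior of a sedentarity-$1$ edge of $F$. Indeed, the edge fan $\Sigma_{P_F}$ contributes one cone per sedentarity-$0$ vertex of $F$, while each boundary completion of Definition \ref{def:facefan} contributes exactly one additional cone for every boundary vertex of $F$ and one further cone whenever a section zero occurs in a sedentarity-$1$ edge. Writing $Z$ for the total number of such interior section zeros, this yields $\sum_F|\Sigma_F(2)| = \sum_{v}|\Faces(v,\hat X)| + Z$, using $\sum_v|\Faces(v,\hat X)| = \sum_F|\Vertices(F)|$.

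For the self-intersection sum I would split the rays of each $\Sigma_F$ into those coming from sedentarity-$0$ edges of $F$ and the extra rays introduced in the completion. Grouping the first kind by edge and invoking Lemma \ref{lem:ToricInt} converts $\sum_F\sum_{e\subset F}\tau_F(e)$ into $-\sum_{v\in\Vertices(\hat X_0)}\sum_{e}\sigma_v(e) + \sum_{\bar v}m_{\bar v}(s_D)$, the last sum running over sedentarity-$1$ vertices (here one uses that each sedentarity-$1$ vertex is adjacent to a unique sedentarity-$0$ edge, so that the terms $m_{\bar v}(s_D)$ are not overcounted). For the extra rays, a direct local computation in the normal-bundle trivialisation shows that the two rays added at a sedentarity-$2$ corner have self-intersection $0$, whereas the two rays created by a section zero contribute $-2$ in total, so the extra rays contribute $-2Z$. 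Finally I would assemble these using $D_i^2 = \deg((s_{D_i})) = \sum_{\bar v}m_{\bar v}(s_{D_i}) + (\text{interior zeros of }s_{D_i})$ from Equation \eqref{eq:boundaryselfintersection}, giving $\sum_{\bar v}m_{\bar v}(s_D) + Z = \sum_i D_i^2$; the $Z$-terms then cancel and one recovers the claimed identity.

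The main obstacle is the local analysis of the completion rays: one must compute the self-intersections $\tau_F(\rho)$ of the rays added in the sedentarity-$1$ and sedentarity-$2$ cases and match them against the section divisor $(s_D)$, and this is precisely where the Delzant property (unimodularity of the tangent vectors at a section zero) enters. Additional care is needed at sedentarity-$2$ vertices, since a boundary edge abutting a corner lies in a sedentarity-$2$ face whose face fan ignores the sections; here I would choose the Delzant sections compatibly so that their divisors are supported away from the corners, guaranteeing that corner contributions to $\deg((s_{D_i}))$ do not disrupt the bookkeeping above.
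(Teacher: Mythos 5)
Your proposal is correct and follows essentially the same route as the paper: apply the toric Noether identity $12 = 3|\Sigma_F(2)| + \sum_\rho \tau_F(\rho)$ to each face fan, sum over faces, convert the $\tau_F(e)$ for sedentarity-$0$ edges via Lemma \ref{lem:ToricInt}, and match the leftover section data against $\sum_i D_i^2$ using Equation \eqref{eq:boundaryselfintersection}; the paper merely organises this as a per-face case analysis rather than your global count $Z$. The only point left implicit in your bookkeeping is that the \emph{single} extra ray added over a sedentarity-$1$ edge carrying no section zero has self-intersection $0$ (its two neighbours are the opposite normal directions $(0,\pm 1)$), which is needed for the extra rays to contribute exactly $-2Z$; also note that the Delzant condition already forces $(s_D)$ to be supported away from the sedentarity-$2$ corners, so no additional compatible choice of sections is required.
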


\begin{proof}
To prove the lemma, we provide a Noether-type formula for every face $F$ of $\hat X$.
We do this by cases.

$\bullet$ {\em Case $F$ is bounded.} Suppose $F$ is a bounded $2$-dimensional face of $\hat X$. 
As $\hat X$ is a Delzant face structure, the toric variety associated to the fan 
$\Sigma_F$ is a non-singular toric surface. 
Noether's Formula for this toric surface (see \cite[Section 2.5]{Fulton}) says that
\begin{equation}\label{eq:Noetherforpolygon}
12 =  3 |{\Faces}(\Sigma_F)|  +  \sum_{\rho \in {\rm Rays}(\Sigma_F)} D_\rho^2,
\end{equation}
where ${\Faces}(\Sigma_F)$ is the set of $2$-dimensional cones of $\Sigma_F$ and  $D_{\rho}$ is the divisor in the toric surface corresponding to the ray $\rho$ of $\Sigma_F$.  
Since the fan $\Sigma_F$ is unimodular, the self-intersection $D_{\rho}^2$ is equal to $\tau_F(e)$ where $e \in {\Edges}(F)$ is the edge of $F$ corresponding to the ray $\rho$ of $\Sigma_F$ \cite[Section 2.5]{Fulton}. 
Taking into account that all edges of $F$ are of sedentarity $0$, we obtain
\begin{equation*}\label{eq:noethersed0}
12 = 3 |{\Vertices}(F)|  + \sum_{\substack{e \in {\Edges}(F)\\ \sed(e)=0}} \tau_F(e).
\end{equation*}

$\bullet$ {\em Case $F$ is unbounded and contains no vertex of sedentarity 2.} Suppose $F$ is a $2$-dimensional face of $\hat X$ containing points of sedentarity 1 but no points of sedentarity $2$. 
Let $e$ be the unique edge of $F$ of sedentarity 1 and $D_e$ be the boundary divisor containing $e$.
The fan $\Sigma_F$ is unimodular, so the corresponding toric surface satisfies Equation \eqref{eq:Noetherforpolygon}. 
The number of $2$-dimensional cones of $\Sigma_F$ is equal to
$|\Vertices(F)| + 1$ if the section $s_{D_e}$ has a zero on $e$ 
(in which case two extra rays were added to the edge fan of $F$ to obtain $\Sigma_F$), 
and equal to $|\Vertices(F)|$ if $s_{D_e}$ has no zero on $e$ (in which case only one ray was added to the edge fan of $F$).
Moreover, the self-intersections $D_{\rho}^2$ are equal to $\tau_F(e)$ if $\rho$ is a ray of $\Sigma_F$ corresponding to an edge $e \in {\Edges}(F)$ of sedentarity 0, and equal to $-1$ if $\rho$ is one of the extra rays added to the edge fan of $F$.
In all cases, we have
\begin{equation*}\label{eq:noethersed1}
12 = 3 |{\Vertices}(F)| + m_e(s_{D_e}) + \sum_{\substack{e \in {\Edges}(F)\\ \sed(e)=0}} \tau_F(e),
\end{equation*}
where $m_e(s_{D_e}) = 1$ if $s_{D_e}$ has a zero on $e$ and $m_e(s_{D_e}) = 0$ otherwise.

$\bullet$ {\em Case $F$ is unbounded and contains a vertex of sedentarity 2.}
Suppose $F$ is a $2$-dimensional face of $\hat X$ containing a point of sedentarity $2$. 
Again, the fan $\Sigma_F$ is unimodular, and so the corresponding toric surface satisfies Equation \eqref{eq:Noetherforpolygon}. We have $|\Faces(\Sigma_F)|=|\Vertices(F)|$. 
The self-intersections $D_{\rho}^2$ are equal to $\tau_F(e)$ if $\rho$ is a ray of $\Sigma_F$ corresponding to an edge $e \in {\Edges}(F)$ of sedentarity 0, 
and equal to $0$ if $\rho$ is one of the two extra rays added to the edge fan of $F$.
We thus get
\begin{equation*}\label{eq:noethersed2}
12 = 3 |{\Vertices}(F)|  + \sum_{\substack{e \in {\Edges}(F)\\ \sed(e)=0}} \tau_F(e).
\end{equation*}

Now, we add all these equalities over all faces $F$ of $\hat X$ to obtain
$$12|{\Faces}(\hat{X})| =  \sum_{F \in {\Faces}(\hat{X})}  3|{\Vertices}(F)| + \sum_{\substack{e \in \Edges(\hat{X})  \\ \sed(e) = 1}} m_e(s_{D_e}) + \sum_{\substack{F \in \Faces(\hat{X})}} \sum_{\substack{e \in \Edges(F) \\ \sed(e) = 0}}   \tau_{F}(e).$$
Using that $\sum_{F \in {\Faces}(\hat{X})}  |{\Vertices}(F)| = \sum_{v \in {\Vertices}(\hat{X})}  |{\Faces}(v, \hat{X})|$ and changing the order of the summation, we get
$$12|{\Faces}(\hat{X})| =  \sum_{v \in {\Vertices}(\hat{X})}  3|{\Faces}(v, \hat{X})| +  \sum_{\substack{e \in \Edges(\hat{X})  \\ \sed(e) = 1}} m_e(s_{D_e}) + \sum_{\substack{e \in \Edges(\hat{X}) \\ \sed(e) = 0}} \sum_{\substack{F \in \Faces(\hat{X}) \\ F \supset e}}  \tau_{F}(e).$$
Applying Lemma \ref{lem:ToricInt}, we further obtain
$$12|{\Faces}(\hat{X})| =  \sum_{v \in {\Vertices}(\hat{X})}  3|{\Faces}(v, \hat{X})|+   \sum_{\substack{e \in \Edges(\hat{X})  \\ \sed(e) = 1}} m_e(s_{D_e})  + \sum_{\substack{\bar{v} \in \Vertices(\hat{X}) \\ \sed(\bar{v}) = 1}} m_{\bar{v}}(s_{D_{\bar v}}) - \sum_{v \in {\Vertices}(\hat{X}_0)}\sum_{e \in {\Edges}(v, \hat{X})} \sigma_v(e),$$
where $D_{\bar v}$ denotes the boundary divisor containing the vertex $\bar v$.
Finally, Equation \eqref{eq:boundaryselfintersection} gives
$$\sum_{i = 1}^k D_i^2 = \sum_{\substack{e \in \Edges(\hat{X})  \\ \sed(e) = 1}} m_{e}(s_{D_e})+  \sum_{\substack{\bar{v} \in \Vertices(\hat{X}) \\ \sed(\bar{v}) = 1}} m_{\bar{v}}(s_{D_{\bar v}}),$$
which proves the lemma.  
\end{proof}

We now combine these results together to complete the proof of Noether's Formula.

\begin{proof}[Proof of Theorem \ref{thm:Noether}]
From Lemma \ref{lem:localcontribution} we obtain
\begin{multline}\label{ChernSimp}
\deg \left( \csm_0(X) + \csm_1(X)^2 - \sum_{i = 0}^k D_i^2 \right) = 
\sum_{v \in {\Vertices}(\hat{X})}  \left( 12 - 6|{\Edges}(v, \hat{X})| + 3|{\Faces}(v, \hat{X})| \right) \\ 
- \sum_{v \in {\Vertices}(\hat{X}_0)}\sum_{e \in {\Edges}(v, \hat{X})} \sigma_v(e).
\end{multline}
Adding $\deg(\sum_{i = 0}^k D_i^2)$ to both sides, and given that each edge of $\hat X$ is adjacent to exactly two vertices, we obtain
\begin{multline*}
\deg \left(\csm_1(X)^2 + \csm_0(X) \right) =   12 |{\Vertices}(\hat{X}) | - 12|{\Edges}(\hat{X})| + \sum_{v \in {\Vertices}(\hat{X})}  3|{\Faces}(v, \hat{X})|  \\ 
- \sum_{v \in {\Vertices}(\hat{X}_0)}\sum_{e \in {\Edges}(v, \hat{X})} \sigma_v(e) + \sum_{i = 1}^k D_i^2. 
\end{multline*}
Finally, by applying Lemma \ref{NoetherToric}, we get
$$\deg(\csm_1(X)^2 + \csm_0(X)) =   12 |{\Vertices}(\hat{X}) | - 12|{\Edges}(\hat{X})| + 12|\Faces(\hat{X})| = 12 \chi(X),$$
and the proof is complete. 
\end{proof}

We next present a corollary that proves Noether's Formula for toric compactifications of tropical surfaces in $\R^n$. 
To do this we consider the {\bf combinatorial stratification} $X^c$ of $X$ from \cite[Section 1.5]{MikZha:Jac}. 
Suppose that a tropical surface $X \subset \R^n$ has lineality space of dimension $0$. 
Since
the local fans around all vertices of the combinatorial stratification $X^c$ of $X$ are matroidal fans, the cell structure $X^c$ gives a polyhedral face structure of $X$ in $\R^n$.
The collection of unbounded directions of $X$ can also be stratified using the combinatorial stratification. We denote by $\rec{(X^c)}$ this stratified set; the next corollary assumes that this is a unimodular fan.

\begin{cor}\label{cor:surfaceToricvariety}
Let $X \subset \RR^n$ be a tropical submanifold of dimension 2 with lineality space of dimension $0$ such that $\Sigma = \rec{(X^c)}$ is a unimodular fan. Then the closure $\overline{X} \subset \T\Sigma$ of $X$ in the tropical toric variety $\T\Sigma$ is a compact tropical surface satisfying Noether's Formula. 
\end{cor}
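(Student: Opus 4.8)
The plan is to deduce the corollary directly from Theorem \ref{thm:Noether}, by verifying that its two hypotheses hold for $\overline X$: that $\overline X$ is a compact tropical surface, and that it admits a Delzant face structure. For the first point I would invoke the examples already developed in the preliminaries. Since $X$ is a $2$-dimensional tropical manifold with lineality space $\{0\}$ and recession fan $\Sigma=\rec(X^c)$, and since $\Sigma$ is assumed unimodular, the tropical toric variety $\T\Sigma$ is itself a tropical manifold by Example \ref{ex:toricman}; then by Example \ref{ex:submanifolds} the closure $\overline X\subseteq\T\Sigma$ is a compact tropical manifold of dimension $2$, i.e.\ a compact tropical surface. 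This is where the unimodularity of $\Sigma$ is essential, as it guarantees smoothness along the toric boundary.

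Next I would produce the required face structure. The combinatorial stratification $X^c$ already furnishes a rational polyhedral face structure on the sedentarity-$0$ locus $X$, because all of its vertex-stars are matroidal fans. I would extend it to $\overline X$ by taking closures of the cells of $X^c$ inside $\T\Sigma$, the newly added boundary cells being organised by the cones of $\Sigma$. Because $\overline X$ is the canonical compactification and $\Sigma$ is unimodular, in each toric chart of the form $\T^{r}\times(\text{matroid fan})$ these closed cells are closures of $\RR$-rational polyhedra transverse to the sedentarity strata, so the resulting $\hat{\overline X}$ is a rational face structure in the sense of Definition \ref{def:facestructure}.

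The crux is the Delzant condition, which by Definition \ref{def:facestructure} need only be checked at vertices $v$ of sedentarity $0$. In a chart, $\starr_v(\hat{\overline X})$ is, up to an integer affine isomorphism, the Bergman fan $\B(M_v)$ of a loopless matroid $M_v$ on a ground set $E$, in its flag-of-flats subdivision. A two-dimensional face $F$ containing $v$ corresponds to a flag $\emptyset\subsetneq F_1\subsetneq F_2\subsetneq E$, and the primitive directions of its two edges at $v$ are $-\bar e_{F_1}$ and $-\bar e_{F_2}$ in $\ZZ^{E}/\ZZ\mathbf 1$. Now $e_{F_1}$, $e_{F_2}-e_{F_1}$, and $\mathbf 1-e_{F_2}$ are $0/1$-vectors supported respectively on $F_1$, $F_2\setminus F_1$, and $E\setminus F_2$, so they have pairwise disjoint supports partitioning $E$; hence, together with appropriate standard basis vectors, $\{e_{F_1},\,e_{F_2}-e_{F_1}\}$ extends to a $\ZZ$-basis of $\ZZ^{E}$, and passing to the quotient by $\ZZ\mathbf 1$ shows that $\{-\bar e_{F_1},-\bar e_{F_2}\}$ extends to a basis of $\ZZ^{E}/\ZZ\mathbf 1$. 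Since integer affine maps preserve the property of extending to a lattice basis, the vectors $\bfw_v(e)$ and $\bfw_v(e')$ complete to a basis of the ambient lattice, so $\hat{\overline X}$ is Delzant.

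With a compact tropical surface carrying a Delzant face structure in hand, Theorem \ref{thm:Noether} applies verbatim and yields Noether's Formula for $\overline X$. I expect the main obstacle to lie not in the Delzant condition at interior vertices, which follows cleanly from the disjoint-support argument above (and in fact shows that every maximal cone of a Bergman fan in its fine subdivision is unimodular), but rather in the bookkeeping at the toric boundary: one must check, chart by chart, that the closures of the combinatorial cells of $X^c$ remain transverse to the sedentarity strata of $\T\Sigma$ and that the unimodular recession fan $\Sigma$ glues consistently with the matroidal interior charts, so that $\hat{\overline X}$ is genuinely a face structure on all of $\overline X$ rather than only on its sedentarity-$0$ part.
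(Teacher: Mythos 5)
Your proposal follows the same route as the paper's proof: take the combinatorial stratification $X^c$, extend it to $\overline{X}$, argue that this gives a Delzant face structure, and invoke Theorem \ref{thm:Noether}. Two remarks. First, the step you explicitly defer at the end --- checking chart by chart that the closures of the $2$-cells of $X^c$ are transverse to the sedentarity strata of $\T\Sigma$ --- is precisely the content of the paper's (very short) argument: it observes that each closed $2$-face is either bounded or, up to extended integer linear transformation, the compactification in $\T\times\R^{n-1}$ or $\T^2\times\R^{n-2}$ of a polyhedron transverse to the boundary, which is where the unimodularity of $\Sigma$ is used. So you have correctly located where the work lies, even though you do not carry it out. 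Second, your verification of the Delzant condition at sedentarity-$0$ vertices (which the paper merely asserts) rests on the claim that $\starr_v(X^c)$ is the Bergman fan in its \emph{fine} flag-of-flats subdivision. This is not accurate: the combinatorial stratification is in general coarser, since a ray $-\bar e_{F}$ of the fine subdivision along which the local structure is that of a plane (for instance a rank-$1$ flat $F$ contained in exactly two rank-$2$ flats $G,G'$ with $G\cup G'$ the whole ground set) is not a $1$-cell of $X^c$. A $2$-cell of $\starr_v(X^c)$ may therefore be a union of several flag cones, and its two extreme rays need not be $-\bar e_{F_1},-\bar e_{F_2}$ for a single flag, so your disjoint-support computation does not apply verbatim. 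The gap is fillable --- the absorption condition forces the relevant flats to partition the ground set, and a similar basis-extension argument shows the merged cones remain unimodular --- but as written this step is incomplete. With those two points repaired, your argument coincides with the paper's.
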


\begin{proof}
Since $\Sigma$ is unimodular, the tropical toric variety  $ \T\Sigma$ is non-singular. Moreover, the closure $\overline{X}$ is compact and the closure of each 2-dimensional face of the combinatorial stratification of $X$ is either bounded or, up to extended  integer linear transformation, equal to the compactification in $\T \times \R^{n-1}$ or $\T^2 \times \R^{n-2}$ of a polyhedron transverse to the boundary. 
Therefore the combinatorial stratification of $\overline{X}$ is a Delzant face structure, in the sense of Definition \ref{def:facestructure}. We can thus apply Theorem \ref{thm:Noether} to obtain the statement. 
\end{proof}

Corollary \ref{cor:surfaceToricvariety} can also be extended to a larger class of 2-dimensional tropical submanifolds of toric varieties by applying tropical blowup operations to the surfaces described above and the fact that Noether's Formula is preserved under these operations by \cite[Theorem 5.1]{Shaw:Surf}.

We conclude with an example showing the meaning of Noether's Formula in the particular case where $X$ is a hypersurface of a tropical toric 3-fold.

\begin{exa}
Suppose that $X_0$ is a non-singular tropical hypersurface in $\R^3$ defined by a tropical polynomial $f$. Let $\Delta$ be the Newton polytope of $f$ and let $\Sigma$ be the normal fan of $\Delta$. Consider the compact tropical surface $X$ obtained as the compactification of $X_0$ in the tropical toric variety $\TT\Sigma$, as in Example \ref{ex:hyper}. 

Topologically, $X$ is a wedge of $|{\rm Int}(\Delta) \cap \Z^3|$ spheres, so that 
$$\chi(X) = 1 + |{\rm Int}(\Delta) \cap \Z^3|.$$
The tropical hypersurface $X$ has a face structure $\hat{X}$ dual to the subdivision of its Newton polytope. This face structure is Delzant. 
We can use Lemma \ref{lem:localcontribution} to determine the local contribution of each vertex to $ \deg(\csm_0(X) + \csm_1(X)^2 - \sum_{i = 1}^k D_i^2).$ We have

$$
m_v(\csm_0(X) + \csm_1(X)^2 - \sum_{i = 1}^k D_i^2)= \begin{cases}
  2 & \text{if } \sed(v) = 0, \\
  - 3 &  \text{if } \sed(v) = 1, \\
3 & \text{if } \sed(v) = 2.  
\end{cases}$$

To compute the self-intersection of the boundary divisors $D_i$ we notice that the boundary divisors are in bijection with the facets of the polytope $\Delta$. Each facet has a normal vector ${\mathbf n}_F$, and two facets which intersect do so along an edge. 
It can be shown that the self-intersection number $D_i^2$ is equal to $\tau_i$, given by
$$- \tau_i \, {\mathbf n}_{F_i} = \sum_{\substack{F' \in {\F}(\Delta) \\ F \neq F'}} L(F \cap F') \, {\mathbf n}_{F'}, $$ where $L(F \cap F') $ denotes the lattice length of the intersection of the faces $F$ and $F'$. In particular, the lattice length is $0 $ if $F \cap F' = \emptyset$ or if $F \cap F'$ is a point.

Using the duality between $\hat{X}$ and the unimodular subdivision of $\Delta$, Noether's formula translates to a formula for the number of interior lattice points of a polytope in terms of its lattice volume $ {\rm Vol}(\Delta) $, the total lattice area of its $2$-dimensional faces ${\rm Area} (\Delta)$, the total lattice length of its edges  ${\rm Peri} (\Delta)$, and the numbers $\tau_i$:

$$12(1 + |{\rm Int}(\Delta) \cap \Z^3|) = 2 {\rm Vol}(\Delta) - 3 {\rm Area} (\Delta) + 3 {\rm Peri} (\Delta)   + \sum_{i = 1}^k \tau_i.$$
\end{exa}
We conjecture a generalization of Theorem \ref{thm:Noether} for compact tropical manifolds of any dimension using the tropical Todd class. 
The Todd class of a complex vector bundle $F$ is a formal power series in its Chern classes. The first few terms of the Todd class are 
\[ \Todd =  1 + \frac{c_1}{2} + \frac{c_1^2  + c_2}{12} + \frac{c_1c_2}{24} + \dots.\]
We define the Todd class of a tropical manifold $X$ of dimension $n$ to be obtained from the formal power series above with the substitution 
$c_k = \csm_{n-k}(X)$, which accounts for the indexing of Chern classes by codimension and the CSM classes by dimension. Let $\Todd_d$ denote the degree $d$ part of $\Todd$ above. 

\begin{conj}
For a compact tropical manifold $X$ of dimension $n$ we have 
$$\chi(X) = \deg \Todd_n(X). $$
\end{conj}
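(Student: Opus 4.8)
The plan is to generalise the proof of Theorem \ref{thm:Noether} given above, replacing the input ``Noether's Formula for smooth complete toric surfaces'' by the statement that every smooth complete toric $n$-fold $Y$ has Todd genus $\deg(\Todd_n(Y)) = \chi(\mathcal{O}_Y) = 1$. This follows from Hirzebruch--Riemann--Roch together with the vanishing $H^i(Y,\mathcal{O}_Y) = 0$ for $i>0$, and for $n=2$ it is exactly the identity $\deg(c_1^2 + c_2) = 12$ used in Equation \eqref{eq:Noetherforpolygon}. The goal is to reproduce the local-to-global mechanism of the surface proof: decompose $\deg(\Todd_n(X))$ into contributions indexed by the cells of a face structure $\hat X$, and then use the toric Todd-genus identity on each top-dimensional face to collapse these contributions into an alternating count of face numbers.

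First I would fix a suitable notion of Delzant face structure $\hat X$ in dimension $n$, generalising Definition \ref{def:facestructure}, so that the local fan $\starr_{\hat X}(v)$ at every sedentarity-$0$ vertex is unimodular and each boundary stratum carries the normal-bundle data needed below. To each top-dimensional face $F$ of $\hat X$ I would attach a complete unimodular \emph{face fan} $\Sigma_F$, and hence a smooth complete toric $n$-fold $Y_F$, mimicking Definition \ref{def:facefan}. As in the surface case this requires, for each boundary stratum, a Delzant-type section of its normal bundle so that the unbounded faces close up to complete unimodular fans; the existence of such sections is the higher-dimensional analogue of Lemma \ref{lem:admissiblesection}, now for normal bundles of strata of arbitrary codimension rather than just of boundary divisors on a curve.

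Next I would prove two lemmas paralleling Lemma \ref{lem:localcontribution} and Lemma \ref{NoetherToric}. The first would expand the Todd polynomial $\Todd_n$ as a combination of products of CSM cycles and, using Proposition \ref{prop:intersectionMatroid} to compute these tropical intersection products and Lemma \ref{lem:csmtransbdy} to control the boundary strata, express $\deg(\Todd_n(X))$ — after subtracting the self-intersection contributions of the boundary strata — as a sum of purely local contributions over the cells of $\hat X$. The second would invoke $\deg(\Todd_n(Y_F)) = 1$ on each $Y_F$ to rewrite these local contributions as $\sum_{i=0}^{n} (-1)^i f_i(\hat X)$, where $f_i(\hat X)$ is the number of $i$-cells, together with the boundary self-intersection terms that exactly cancel those subtracted in the first lemma. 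Combining the two and recognising $\sum_{i=0}^{n} (-1)^i f_i(\hat X) = \chi(X)$ would complete the argument.

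The main obstacle is the combinatorial bookkeeping of the second lemma. In dimension $2$ the boundary stratification involves only sedentarities $0,1,2$ and the Todd polynomial is the single term $(c_1^2 + c_2)/12$; in dimension $n$ the polynomial $\Todd_n$ has many mixed terms, such as $c_1 c_2 / 24$, so one must simultaneously control products of CSM cycles of all dimensions together with the self-intersections and Chern data of normal bundles of boundary strata of every codimension, and then organise everything into an inclusion--exclusion that collapses cleanly to the alternating face count. Proving the requisite local ``tropical Todd genus $=1$'' identities attached to the $Y_F$ — where the surface proof relied on the one-dimensional theory of tropical line bundles and Delzant sections — is precisely the step that will need a genuinely higher-dimensional replacement, and an adjunction-type induction on dimension via Theorem \ref{thm:Adjunction} may be needed to supply it.
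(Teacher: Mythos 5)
This statement is left as a conjecture in the paper: no proof is given, and the authors state it precisely because the higher-dimensional analogue of their Noether argument is not known to go through. Your proposal is a reasonable research plan that mirrors the structure of the proof of Theorem \ref{thm:Noether}, but it is not a proof --- every genuinely hard step is deferred rather than carried out, and in each case the deferred step is exactly where the difficulty lies. Concretely: (i) you need a higher-dimensional replacement for Lemma \ref{lem:admissiblesection}, i.e.\ Delzant-type sections of normal bundles of boundary strata of arbitrary codimension, so that each top-dimensional face closes up to a complete unimodular fan; nothing analogous to the one-dimensional section-construction argument is supplied, and it is not clear what ``Delzant section'' should even mean for a rank-one bundle over a base of dimension $>1$ so that the resulting face fans are unimodular. (ii) The analogue of Lemma \ref{lem:localcontribution} requires computing the degrees of all mixed products $\csm_{n-i_1}(X)\cdots\csm_{n-i_k}(X)$ appearing in $\Todd_n$ and localising them at cells of $\hat X$; in the surface case this rests on the explicit formula for $m_v(\csm_1(X_0)^2)$ from \cite{Shaw:Surf} and a tropical Poincar\'e duality computation of $\dim \Fs_2(v)$, neither of which you generalise. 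Note also that products involving boundary divisors are only defined up to rational equivalence, so the claim that the result localises to a canonical cycle supported on cells needs an argument. (iii) The collapse of the local contributions to $\sum_i(-1)^i f_i(\hat X)$ in Lemma \ref{NoetherToric} uses the precise cancellation encoded in Lemma \ref{lem:ToricInt}, which matches the integers $\sigma_v(e)$ against the toric self-intersection numbers $\tau_F(e)$; the corresponding bookkeeping for the full Todd polynomial, with contributions from strata of every sedentarity and every codimension, is exactly the ``main obstacle'' you name but do not resolve.

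In short, the proposal identifies the right skeleton --- local toric Todd genus $=1$ on face fans, plus a local-to-global decomposition of $\deg\Todd_n(X)$ --- but each of the three load-bearing lemmas is replaced by a statement of what it ought to say rather than a proof. Since the paper records this statement as an open conjecture, the correct assessment is that your text is a plausible strategy outline, not a proof; to turn it into one you would need, at minimum, the higher-dimensional section lemma, the localisation of the mixed CSM products, and the generalised cancellation identity, none of which are established here.
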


\vspace{2mm}

\bibliographystyle{amsalpha}
\bibliography{chernbib}

\end{document}